%
%
%
%
%
%
%

\documentclass[11pt]{amsart}

\usepackage[a4paper,hmargin=3.5cm,vmargin=3.9cm]{geometry}
\usepackage{amsfonts,amssymb,amscd,amstext}
\usepackage{graphicx}
\usepackage[dvips]{epsfig}
\usepackage{quoting}
\usepackage{hyperref}
\usepackage{verbatim}


\usepackage{fancyhdr}
\pagestyle{fancy}
\fancyhf{}

\input xy
\xyoption{all}



\usepackage{enumerate}
\usepackage{titlesec}
\usepackage{mathrsfs}

\pretolerance=2000
\tolerance=3000



\setlength{\parskip}{0.5em}

\titleformat{\section}
{\filcenter\bfseries} {\thesection{.}}{0.2cm}{}

\titleformat{\subsection}[runin]
{\bfseries} {\thesubsection{.}}{0.15cm}{}[.]

\titleformat{\subsubsection}[runin]
{\em}{\thesubsubsection{.}}{0.15cm}{}[.]

\usepackage[up,bf]{caption}

\newtheorem{theorem}{Theorem}[section]
\newtheorem{proposition}[theorem]{Proposition}

\newtheorem{corollary}[theorem]{Corollary}

\theoremstyle{definition}
\newtheorem{definition}[theorem]{Definition}
\newtheorem{remark}[theorem]{Remark}

\numberwithin{equation}{section}
\numberwithin{figure}{section}


\newcommand\Acal{\mathcal{A}}

\newcommand\Fcal{\mathcal{F}}

\newcommand\Pcal{\mathcal{P}}



\newcommand\Ascr{\mathscr{A}}

\newcommand\Cscr{\mathscr{C}}

\newcommand\Iscr{\mathscr{I}}

\newcommand\Oscr{\mathscr{O}}


\newcommand\C{\mathbb{C}}

\newcommand\N{\mathbb{N}}
\renewcommand\P{\mathbb{P}}

\newcommand\R{\mathbb{R}}

\newcommand\Z{\mathbb{Z}}



\newcommand\hgot{\mathfrak{h}}
\newcommand\igot{\mathfrak{i}}

\renewcommand\igot{\mathfrak{i}}

%
%

%
%

\renewcommand\imath{\igot}

%
%
\newcommand\hra{\hookrightarrow}

\newcommand\longhookrightarrow{\ensuremath{\lhook\joinrel\relbar\joinrel\rightarrow}}

%
%
\newcommand\wt{\widetilde}

\newcommand\di{\partial}
\newcommand\dibar{\overline\partial}

%
%

\newcommand\dist{\mathrm{dist}}

\newcommand\Flux{\mathrm{Flux}}

\newcommand\CMI{\mathrm{CMI}}
\newcommand\NC{\mathrm{NC}}

\def\dist{\mathrm{dist}}

\def\Flux{\mathrm{Flux}}

\newcommand\boldA{\mathbf A}

\usepackage[color=blue!20]{todonotes}


\usepackage{color}

\linespread{1.1}   

\begin{document}


\fancyhead[LO]{Isotopies of complete minimal surfaces of finite total curvature}
\fancyhead[RE]{A.\ Alarc\'on, F.\ Forstneri\v c and F.\ L\'arusson}
\fancyhead[RO,LE]{\thepage}

\thispagestyle{empty}



\begin{center}
{\bf \Large Isotopies of complete minimal surfaces
\\ \vspace{1mm}  
of finite total curvature} 

\bigskip

%
%
{\bf Antonio Alarc\'on, Franc Forstneri\v c, and Finnur L\'arusson}
\end{center}


%
%

\bigskip

\begin{quoting}[leftmargin={5mm}]
{\small
\noindent {\bf Abstract}\hspace*{0.1cm}  
Let $M$ be a Riemann surface biholomorphic to an affine algebraic curve.
We show that the inclusion of the space 
$\Re \NC_*(M,\C^n)$ of real parts of nonflat proper algebraic null immersions
$M\to\C^n$, $n\ge 3$, into the space $\CMI_*(M,\R^n)$ of 
complete nonflat conformal minimal immersions 
$M\to\R^n$ of finite total curvature is a weak homotopy equivalence. 
We also show that the $(1,0)$-differential $\di$, 
mapping $\CMI_*(M,\R^n)$ or  $\Re \NC_*(M,\C^n)$
to the space $\Ascr^1(M,\boldA)$ of algebraic $1$-forms on $M$ 
with values in the punctured null quadric $\boldA\subset \C^n\setminus\{0\}$, 
is a weak homotopy equivalence. 
Analogous results are obtained for proper algebraic immersions $M\to\C^n$,
$n\ge 2$, directed by a flexible or algebraically elliptic punctured
cone in $\C^n\setminus\{0\}$.

\smallskip

\noindent{\bf Keywords}\hspace*{0.1cm} 
Riemann surface, minimal surface, algebraic immersion, 
directed immersion, algebraically elliptic manifold, flexible manifold,
Oka manifold

\noindent{\bf Mathematics Subject Classification (2020)}\hspace*{0.1cm} 
Primary 53A10. Secondary 30F99, 32E30, 32H02, 32Q56, 53C42

\noindent{\bf Date}\hspace*{0.1cm} 5 June 2024
}

\end{quoting}

%
%
\section{Introduction}\label{sec:intro}

\noindent
In this paper, we initiate the study of the homotopy theory of minimal surfaces 
of finite total curvature in Euclidean spaces $\R^n$, $n\ge 3$,
and of algebraic directed immersions from affine algebraic
curves to complex Euclidean spaces $\C^n$, $n\geq 2$. 

We begin by recalling the basic facts on minimal surfaces, 
referring to \cite{Osserman1986} and \cite{AlarconForstnericLopez2021} 
for more information. Let $M$ be an open Riemann surface. An immersion 
$u=(u_1,\ldots,u_n):M\to\R^n$ is conformal if and only if the $\C^n$-valued  
$(1,0)$-form $\phi=(\phi_1,\ldots,\phi_n)$, whose $i$-th component
$\phi_i=\di u_i$ is the $(1,0)$-differential of the function $u_i$,
satisfies the nullity condition
\begin{equation}\label{eq:nullity}
	\sum_{i=1}^n \phi_i^2=0.
\end{equation}
A conformal immersion $u:M\to\R^n$ parametrises a minimal surface
if and only if $u$ is harmonic if and only if
$\phi=\di u$ is a holomorphic 1-form. Conversely, 
a nowhere vanishing holomorphic 1-form $\phi=(\phi_1,\ldots,\phi_n)$
on $M$ satisfying the nullity condition \eqref{eq:nullity} and the period 
vanishing condition $\Re \int_C \phi =0$ for all closed curves $C$ in $M$
integrates to a conformal minimal immersion $u=\Re \int \phi:M\to\R^n$. 
If $\int_C \phi =0$ for all closed curves $C$ in $M$,
then $\phi$ integrates to a holomorphic null curve $h=\int \phi:M\to\C^n$
whose real and imaginary parts parametrise conjugate minimal surfaces. 
Define the null quadric 
\begin{equation}\label{eq:nullq}
	\boldA =\bigl\{(z_1,\ldots,z_n)\in \C^n_* =\C^n\setminus \{0\}: 
	z_1^2+z_2^2+\cdots + z_n^2=0\bigr\}.
\end{equation}
Given a Riemann surface $M$, the null quadric 
defines a holomorphic subbundle $\mathcal A$ of the vector bundle 
$(T^*M)^{\oplus n}$, with fibre isomorphic to $\boldA$, whose sections 
are $n$-tuples $(\phi_1,\ldots,\phi_n)$ 
of $(1,0)$-forms on $M$ without common zeros such that the ratio 
$[\phi_1:\cdots:\phi_n]:M\to \P^{n-1}$ takes values in the projective quadric
defined by the same equation $z_1^2+z_2^2+\cdots + z_n^2=0$.
Hence, a smooth map $u:M\to\R^n$ is a conformal minimal immersion if and 
only if $\phi=\di u$ is a holomorphic section of the bundle $\Acal\to M$.
Note that the fibre multiplication by a nonzero complex 
number is a well-defined operation on $\Acal$. 

The flux homomorphism $\Flux_u : H_1(M,\Z)\to \R^n$
of a conformal minimal immersion $u:M\to\R^n$ is given by
\begin{equation}\label{eq:FluxuC}
	\Flux_u(C)= \int_C d^c u 
	\quad \text{for every}\ \ [C]\in H_1(M,\Z).
\end{equation}
We may view $\Flux_u$ as an element of 
the cohomology group $H^1(M,\R^n)$. 
The $1$-form $d^c u=\Im(2\di u)=\imath(\dibar u-\di u)$ 
is the differential of any local harmonic conjugate of $u$, so  
$u$ is the real part of a holomorphic null curve 
$M\to\C^n$ if and only if $\Flux_u=0$.

Complete minimal surfaces of finite total Gaussian curvature 
are among the most intensively studied
minimal surfaces, and they play an important role in the 
classical global theory of minimal surfaces;
see \cite{Osserman1986} and \cite[Chapter 4]{AlarconForstnericLopez2021},
among many other sources. If $u:M\to\R^n$ is 
a conformally immersed minimal surface of finite total curvature, 
then the Riemann surface $M$ is the complement in a compact 
Riemann surface $\overline M$ of a nonempty finite set 
$E=\{x_1, \ldots, x_m\}$ whose points are called the ends of $M$. 
Such a surface $M$ admits a biholomorphism 
onto a closed embedded algebraic curve in $\C^3$ and  
hence will be called an {\em affine Riemann surface}. 
The bundle $\Acal \to M$ with fibre $\boldA$ \eqref{eq:nullq} is algebraic 
and $\di u$ is a meromorphic $1$-form on 
$\overline M$ without zeros or poles on $M$, that is, an algebraic (regular)
section of $\Acal$ over $M$.  Completeness of $u$ is reflected in
$\di u$ having an effective pole at every point of $E=\overline M\setminus M$.
The surface $u(M)$ is then properly immersed in $\R^n$ 
and has a fairly simple and 
well-understood asymptotic behaviour at every end
of $M$, described by the Jorge--Meeks theorem \cite{JorgeMeeks1983T}.
Although this family of minimal surfaces has been a focus of interest 
since the seminal work of Osserman in the 1960s \cite{Osserman1986}, 
the theories of approximation and interpolation for complete minimal surfaces 
of finite total curvature in $\mathbb{R}^n$, including Runge and 
Mittag--Leffler type theorems, have been developed only recently 
\cite{Lopez2014TAMS,AlarconCastro-InfantesLopez2019CVPDE,AlarconLopez2022APDE}.  
The present paper provides the first contributions 
to the homotopy theory of this 
most important class of minimal surfaces.

We denote by $\CMI_*(M,\R^n)$ the space of complete nonflat conformal 
minimal immersions $M\to \R^n$ of finite total curvature, and by 
$\NC_*(M,\C^n)$ and $\Re\NC_*(M,\C^n)$ the spaces of complete 
nonflat holomorphic null immersions $M\to \C^n$ of finite total curvature
(that is, proper and algebraic) and their real parts, respectively. 
We denote by $\Ascr^1(M,\boldA)$ the space of $\C^n$-valued meromorphic 
$1$-forms $\phi=(\phi_1,\ldots,\phi_n)$ on $\overline M$ having no zeros or 
poles in $M$ and satisfying the nullity condition \eqref{eq:nullity}, 
that is, algebraic sections of the bundle $\mathcal A\to M$, and by 
$\Ascr^1_*(M,\boldA)$ the subspace of nonflat 1-forms. 
Nonflatness means that the map $[\phi_1:\cdots:\phi_n]:M\to \P^{n-1}$ 
is nonconstant. All these spaces are endowed with the compact-open
topology.  

Consider the diagram
\begin{equation}\label{eq:diagram}
\xymatrix{
	\Re \NC_*(M,\C^n)  \ar@{^{(}->}[r] \ar[dr]_\di  
	&  \CMI_*(M,\R^n) \ar[d]^\di  \\
	&   \Ascr^1(M,\boldA)
}
\end{equation}
where $\di$ is the $(1,0)$-differential. The following is our main result.

%
%
\begin{theorem}\label{th:whe1}
If $M$ is an affine Riemann surface, then the maps in \eqref{eq:diagram}
are weak homotopy equivalences. 
\end{theorem}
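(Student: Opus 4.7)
The plan is to prove that each of the two $(1,0)$-differentials in \eqref{eq:diagram} is a weak homotopy equivalence, from which the inclusion $\Re\NC_*(M,\C^n)\hookrightarrow\CMI_*(M,\R^n)$ will follow by commutativity of the triangle and the $2$-out-of-$3$ property. First I would factor each $\di$ as a trivial $\R^n$-bundle onto its image (trivialised by evaluation at a base point $p_0\in M$), since two conformal minimal immersions — or two holomorphic null curves modulo imaginary constants — with the same $(1,0)$-differential differ by a real constant. The image of $\di\colon \CMI_*(M,\R^n)\to \Ascr^1(M,\boldA)$ is the subspace $\mathcal{E}_\Re$ of nonflat algebraic sections $\phi$ having an effective pole at every end $x_j\in E=\overline M\setminus M$ (the Jorge--Meeks characterisation of completeness for finite total curvature) and satisfying $\Re\int_C\phi=0$ for every $[C]\in H_1(M,\Z)$; the image of $\di\colon \Re\NC_*(M,\C^n)\to \Ascr^1(M,\boldA)$ is the analogous subspace $\mathcal{E}_0$ cut out by the condition $\int_C\phi=0$. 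Contractibility of the $\R^n$ fibres then reduces the theorem to showing that the two inclusions $\mathcal{E}_0\hookrightarrow \mathcal{E}_\Re\hookrightarrow \Ascr^1(M,\boldA)$ are weak homotopy equivalences.

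The main step is, parametrically in a compact CW complex $K$, to construct a \emph{period-dominating spray} for algebraic sections. Given a continuous family $\{\phi_s\}_{s\in K}\subset\Ascr^1(M,\boldA)$ — which one can first move into the nonflat, complete open subspace by small algebraic perturbations, since flat sections form a thin subset (parametrised by a point of the projective null quadric together with a scalar algebraic $1$-form) and the effective-pole conditions at $E$ are open — I would produce algebraic perturbations
\[
\Phi\colon K\times\C^N\to\Ascr^1_*(M,\boldA),\qquad \Phi(s,0)=\phi_s,
\]
whose period map $t\mapsto\bigl(\int_{C_j}\Phi(s,t)\bigr)_{j}$ over a basis $\{C_j\}$ of $H_1(M,\Z)$ has surjective real (resp.\ complex) differential at $t=0$. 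An application of the implicit function theorem yields a continuous $t(s)\in\C^N$ with $\Phi(s,t(s))\in\mathcal{E}_\Re$ (resp.\ $\mathcal{E}_0$), producing the required homotopy. A relative version over pairs $(B^{k+1},S^k)$ then gives isomorphism on all homotopy groups.

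The principal obstacle is building this spray in the \emph{algebraic} category, preserving both meromorphicity on $\overline M$ and the prescribed pole behaviour at $E$. In the holomorphic setting, exploiting that $\boldA$ is algebraically elliptic with finitely many complete algebraic tangent vector fields spanning its tangent space at every point, one integrates these flows against scalar $1$-forms supported near the basis cycles $C_j$ to vary periods independently; this is essentially classical. To carry out the same construction algebraically, I would need a Runge/Mergelyan-type approximation theorem for algebraic sections of $\Acal$ with controlled principal parts at the punctures $E$, so that the perturbations extend to meromorphic $1$-forms on $\overline M$ with the required pole orders. I would assemble this ingredient from the algebraic approximation and interpolation theorems of L\'opez and of Alarc\'on--Castro-Infantes--L\'opez and Alarc\'on--L\'opez \cite{Lopez2014TAMS,AlarconCastro-InfantesLopez2019CVPDE,AlarconLopez2022APDE}, combined with the algebraic Oka principle for $\boldA$ applied parametrically over $K\times\overline M$.
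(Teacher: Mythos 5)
Your overall strategy mirrors the paper's: reduce the statement to the claim that the map $\di$ from the two immersion spaces to subspaces of $\Ascr^1(M,\boldA)$ is a weak homotopy equivalence (using contractibility of the $\R^n$ translation fibre), then deform continuous families of algebraic $1$-forms using a period-dominating spray and the implicit function theorem, while preserving algebraicity and effective poles at the ends. That much is the right skeleton, and your identification of the images $\mathcal{E}_0\subset\mathcal{E}_\Re\subset\Ascr^1(M,\boldA)$ is correct. However, there are two substantive gaps.

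First, a period-dominating spray combined with the implicit function theorem can only kill the \emph{small} periods: the spray $\Phi(s,t)$ has $\Phi(s,0)=\phi_s$, and surjectivity of the period differential at $t=0$ only lets you steer the periods to values close to those of $\phi_s$. If the family $\{\phi_s\}$ has real periods that are not already near zero, the implicit function theorem gives you nothing. The paper handles this by a preliminary and nontrivial \emph{holomorphic} step: using the parametric h-principle of Forstneri\v c--L\'arusson \cite[Theorem 5.3]{ForstnericLarusson2019CAG}, the family $f_p=\phi_p/\theta$ is first deformed, through nondegenerate holomorphic maps, to a family $f_p^1$ with $\Re\Pcal(f_p^1)=0$ (keeping $f_p^t=f_p$ for $p\in Q$). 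Only then is the \emph{homotopy} $\{f_p^t\}$ approximated algebraically, and the IFT applied at $t=1$ where the periods are already approximately zero. Your proposal does not address how to bring the periods into the IFT regime, so the argument as stated would fail for families with large periods.

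Second, the algebraic approximation theorem needed here is parametric in a way that does not follow from the sources you cite. L\'opez, Alarc\'on--Castro-Infantes--L\'opez, and Alarc\'on--L\'opez provide algebraic Runge/Mergelyan approximation for a \emph{single} $1$-form (possibly with interpolation), not for a continuous family parametrized by a compact space that is algebraic in the spray variable $\zeta$ and the space variable but only continuous in the parameter $p$, with the fixing condition on a closed subspace $Q\subset P$. There is no off-the-shelf ``algebraic Oka principle for $\boldA$ applied parametrically over $K\times\overline M$'' that you can invoke; this is precisely the new tool the paper develops in Section 2 (Theorems \ref{th:ahRunge1} and \ref{th:ahRunge2}, Corollary \ref{cor:flexible}), building on Forstneri\v c's algebraic homotopy approximation theorem and using that flexibility of $\boldA$ yields a fibre-dominating algebraic spray on a \emph{trivial} vector bundle, which enables the parametric Oka--Weil argument. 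Without this ingredient, the plan of approximating the spray $\Phi$ algebraically, parametrically in $s\in K$, is not justified. Your proposal identifies the right gap but wrongly attributes its closure to results that are nonparametric.

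Smaller points: making the family nonflat and with effective poles at all ends is not merely a thinness/openness observation; it requires the perturbations to stay in the algebraic category and to be made continuously in the parameter, which the paper establishes separately (Propositions \ref{prop:poles} and \ref{prop:nondegenerate}) using the same flexibility machinery.
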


Recall that a continuous map $\alpha: X\to Y$ between topological spaces
is said to be a weak homotopy equivalence if it induces
a bijection of path components of the two spaces and an isomorphism
$\pi_k(\alpha):\pi_k(X)\to\pi_k(Y)$ of their homotopy groups for $k=1,2,\ldots$ 
and arbitrary base points.
Thus, Theorem \ref{th:whe1} says that the three mapping spaces in 
diagram \eqref{eq:diagram} have the same rough topological shape.

Note that the images of the maps $\di$ in \eqref{eq:diagram} are contained 
in the subspace $\Ascr^1_\infty(M,\boldA)$ of $\Ascr^1_*(M,\boldA)$ 
consisting of nonflat $1$-forms that have poles at all ends of $M$. 
It turns out that $\Ascr^1_\infty(M,\boldA)$ and $\Ascr^1_*(M,\boldA)$
are open everywhere dense subsets of $\Ascr^1(M,\boldA)$, and the inclusions
\begin{equation}\label{eq:inclusions}
	\Ascr^1_\infty(M,\boldA) \longhookrightarrow \Ascr^1_*(M,\boldA)
	\longhookrightarrow \Ascr^1(M,\boldA)
\end{equation}
are weak homotopy equivalences (see Propositions 
\ref{prop:poles} and \ref{prop:nondegenerate}).
Hence, Theorem \ref{th:whe1} also 
holds if the operator $\di$ is considered as a map to any of these two smaller 
spaces of $1$-forms on $M$. See the more detailed diagram 
\eqref{eq:diagram-extended} and Corollary \ref{cor:whe-extended}.	

Recall that the tangent bundle of an affine Riemann surface $M$ is 
holomorphically trivial (this holds for every open Riemann
surface by a theorem of Oka \cite{Oka1939}, 
see also \cite[Theorem 5.3.1~(c)]{Forstneric2017E} and
the more precise result of Gunning and Narasimhan 
\cite{GunningNarasimhan1967}), but is not
algebraically trivial in general.  If the genus of $M$ is 0 or 1, 
that is, $\overline M$ is the Riemann sphere $\P$ or a torus, 
then $TM$ is algebraically trivial. 
More generally, $TM$ is algebraically trivial if $M$ is 
embeddable as a closed algebraic curve in $\C^2$.
Note that $TM$ is algebraically trivial if and only if the cotangent bundle
$T^*M$ is such, and this holds if and only if there is an 
algebraic $1$-form $\theta$ on $M$ without zeros or poles. 
Such a 1-form is the restriction to $M$ of a meromorphic 
1-form on $\overline M$. Every $\phi\in \Ascr^1(M,\boldA)$ 
is then of the form $\phi=f\theta$ where $f:M\to \boldA$ is an algebraic 
map. In this case we can replace the space $\Ascr^1(M,\boldA)$
in \eqref{eq:diagram} by the space $\Ascr(M,\boldA)$ of 
algebraic maps $M\to\boldA$ and the differential $u\mapsto \di u$ 
by the map $u\mapsto \di u/\theta$.

For the spaces of nonflat holomorphic null curves and 
nonflat conformal minimal immersions 
(not necessarily complete or of finite total curvature) 
from an arbitrary open Riemann surface $M$, and 
with $\Ascr^1(M,\boldA)$ replaced by the bigger space
$\Oscr^1(M,\boldA)$ of holomorphic 1-forms on $M$ 
with values in $\boldA$, the homotopy principle for the maps in
\eqref{eq:diagram} was obtained by Forstneri\v c and L\'arusson
\cite[Theorems 1.1 and 1.2]{ForstnericLarusson2019CAG},
following the work of Alarc\'on and Forstneri\v c in
\cite{AlarconForstneric2014IM,AlarconForstneric2018Crelle}. 
In this case, the maps in \eqref{eq:diagram} were shown to be 
genuine homotopy equivalences when the homology group 
$H_1(M,\Z)$ is finitely generated, which holds 
if $M$ is an affine Riemann surface. 
We do not know whether the same is true in the 
context of Theorem \ref{th:whe1}. To show that a weak homotopy equivalence 
is a genuine homotopy equivalence using Whitehead's theorem, one needs to 
know that the spaces in question have the homotopy type of CW complexes.  
In \cite{ForstnericLarusson2019CAG}, this was established by showing that the spaces are absolute neighbourhood retracts (such results were first proved in \cite{Larusson2015PAMS}).  We do not see any way to prove this in the present setting and have no reason to believe it is true.  

The aforementioned results in \cite{ForstnericLarusson2019CAG}
are used in an important way in the proof of Theorem \ref{th:whe1}, 
given in Section \ref{sec:whe1}. 
Another major ingredient is the algebraic homotopy approximation
theorem for sections of algebraically elliptic submersions over an affine
manifold, due to Forstneri\v c \cite{Forstneric2006AJM}. We shall use
improved versions given by Theorems \ref{th:ahRunge1} 
and \ref{th:ahRunge2} and Corollary \ref{cor:flexible}. 

%
%
%
In order to understand the topological structure of the mapping spaces in \eqref{eq:diagram}, one may consider the following
extended diagram
\begin{equation}\label{eq:diagram2}
\xymatrix{
	 \CMI_*(M,\R^n) \ar[d]^\di \ar@{^{(}->}[r]^{\iota} &
	 \CMI_{\mathrm{nf}}(M,\R^n) \ar[d]^\di  & \\ 
	\Ascr^1(M,\boldA) \ar@{^{(}->}[r]^{\alpha} &
	\Oscr^1(M,\boldA) \ar[r]^{\beta} & \Cscr(M,\boldA),
}
\end{equation}
where $\iota$ is the inclusion of $\CMI_*(M,\R^n)$ in the space
$\CMI_{\mathrm{nf}}(M,\R^n)$ of nonflat conformal minimal
immersions $M\to\R^n$, $\alpha$ is the inclusion of the space
of algebraic 1-forms in the space of holomorphic 1-forms
with values in $\boldA$, and $\beta$ is the map 
$\phi\mapsto \phi/\theta$ where $\theta$ is a fixed 
nowhere vanishing holomorphic $1$-form on $M$. 
The map $\beta$ is a weak homotopy equivalence by the
Oka--Grauert principle since the null quadric $\boldA$ is 
complex homogeneous and hence an Oka manifold. 
The left-hand vertical map $\di$ is a weak homotopy equivalence by 
Theorem \ref{th:whe1}, while the right-hand one is a weak homotopy equivalence by \cite[Theorem 5.3]{ForstnericLarusson2019CAG}. 
In order to understand the inclusion $\iota$, it thus remains to understand the 
inclusion $\alpha$. In principle, the limitations of the algebraic Oka principle, 
discovered by L\'arusson and Truong \cite{LarussonTruong2019}, 
suggest that $\alpha$ may fail to be a weak homotopy equivalence.
Nevertheless, it has recently been shown by Alarc\'on and L\'arusson  
\cite[Proposition 2.3 and Corollary 1.8]{AlarconLarusson2023regular} 
that $\alpha$ induces a surjection of the path components of the two spaces, 
so $\iota$ does as well. We expect that $\alpha$ and hence $\iota$ induce 
bijections of path components, but we have not been able to prove it.
In Section \ref{sec:topological}, we reduce 
this question to the problem of whether the space 
$\Ascr^1(M,\boldA)$ is locally contractible; see
Theorem \ref{th:locally-contractible-2}.

For the inclusion $\Re \NC_*(M,\C^n) \hookrightarrow \CMI_*(M,\R^n)$,
Theorem \ref{th:whe1} says in particular that every complete nonflat conformal 
minimal immersion $M\to\R^n$ of finite total curvature can be deformed 
through maps of the same type to the real part of a proper algebraic null curve 
$M\to\C^n$. The following result shows that, in addition, one can control the 
flux along an isotopy in $\CMI_*(M,\R^n)$. 

%
%
\begin{theorem} \label{th:flux}
Let $M$ be an affine Riemann surface and $u_0:M\to\R^n$, $n\geq 3$, be a complete nonflat conformal minimal immersion of finite total curvature. Then there is a smooth isotopy $u_t:M\to\R^n$, $t\in[0,1]$, of complete nonflat conformal minimal immersions of finite total curvature such that $u_1$ is the real part of a proper algebraic null curve $h:M\to\C^n$. 
More generally, for any homotopy $\Fcal_t \in H^1(M,\R^n)$, $t\in [0,1]$, 
with $\Fcal_0=\Flux_{u_0}$ there is a smooth isotopy $u_t:M\to\R^n$, 
$t\in[0,1]$, of complete nonflat conformal minimal immersions of finite total 
curvature such that $\Flux_{u_t}=\Fcal_t$ for all $t\in [0,1]$.
\end{theorem}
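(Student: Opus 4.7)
The plan is to derive the first assertion from the second via a straight-line flux homotopy, and to establish the flux statement as a continuous path-lifting property of the flux map, realised by a flux-dominating algebraic spray.

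For the reduction, apply the flux version with $\Fcal_t := (1-t)\Flux_{u_0}$. The resulting endpoint $u_1 \in \CMI_*(M,\R^n)$ satisfies $\Flux_{u_1}=0$, so the algebraic $1$-form $\phi := \di u_1$, meromorphic on $\overline M$ with poles contained in the finite end set $E = \overline M \setminus M$, has vanishing periods along every cycle in $H_1(M,\Z)$. In particular, the periods around small loops encircling the ends vanish, forcing the residues of $\phi$ at each point of $E$ to be zero. Consequently the primitive $h := \int \phi : M \to \C^n$ is single-valued and extends meromorphically to $\overline M \to \CP^n$ with poles precisely on $E$, so $h$ is a proper algebraic null curve and $u_1 = \Re h \in \Re\NC_*(M,\C^n)$.

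For the flux statement, it suffices to show that $\Flux \colon \CMI_*(M,\R^n) \to H^1(M,\R^n)$ admits continuous local sections near every point; the compact image of $\Fcal_t$ can then be covered by finitely many such sections and local lifts concatenated to produce the global isotopy. A local section near $u$ is built from a flux-dominating algebraic spray at $\phi := \di u$. Represent a basis of the finitely generated group $H_1(M,\Z)$ by loops $C_1,\dots,C_k \subset M$ and choose pairwise disjoint small discs $D_j \subset M$ with $D_j \cap C_i \neq \emptyset$ iff $i=j$. Since the null quadric $\boldA$ is complex homogeneous, hence algebraically elliptic, one can select algebraic tangent vector fields $V_1,\dots,V_N$ on $\boldA$ whose values span $T\boldA$ at every point. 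Composing their algebraic flows, applied over the discs $D_j$, with $\phi$ yields an algebraic spray
\[
\Phi : \C^{Nk} \longrightarrow \Ascr^1(M,\boldA), \qquad \Phi(0) = \phi,
\]
for which the period map $s \mapsto \bigl(\int_{C_j} \Phi(s)\bigr)_j$ is submersive at $s=0$. The implicit function theorem then supplies a submanifold of parameters on which the real periods remain zero while the imaginary periods sweep out a neighbourhood of $\Flux_u$ in $H^1(M,\R^n)$; integrating gives the required continuous family $u_s \in \CMI_*(M,\R^n)$, since completeness, nonflatness and the pole condition at $E$ are all open.

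The main obstacle is making the spray \emph{algebraic} on $\overline M$ rather than merely holomorphic on $M$, since naive cut-offs destroy the meromorphic extension and hence finite total curvature. The construction therefore proceeds in two stages. First, build the holomorphic flux-dominating spray using the flexibility of $\boldA$, in the spirit of the null-curve period-domination techniques of \cite{AlarconForstneric2014IM} and \cite{ForstnericLarusson2019CAG}. Then approximate it uniformly on a Runge compact containing $C_1\cup\cdots\cup C_k$ by algebraic sections of $\Acal \to M$ via the algebraic homotopy approximation theorem of Forstneri\v c used in the proof of Theorem \ref{th:whe1} (Theorems \ref{th:ahRunge1}--\ref{th:ahRunge2} and Corollary \ref{cor:flexible}). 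The approximant inherits period-domination by closeness and preserves pole orders at $E$. Assembling the local sections by a standard compactness argument on $[0,1]$ delivers the global isotopy $u_t$ with $\Flux_{u_t} = \Fcal_t$.
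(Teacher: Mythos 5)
Your overall strategy --- a period-dominating holomorphic spray, algebraic Runge approximation, and an implicit-function-theorem correction --- is the right one and matches the spirit of the paper's proof. Your reduction of the first assertion to the flux statement is also fine. However, there are two issues, one of which is a genuine gap.

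First, a loose step: local sections of $\Flux\colon\CMI_*(M,\R^n)\to H^1(M,\R^n)$ near each point do not by themselves yield a continuous lift of the path $\Fcal_t$. To "cover and concatenate" one must recentre each local section at the endpoint of the previous local lift, and this needs to be made precise (it amounts to a path-lifting property, as for a Serre fibration --- proved in the paper only later, in Corollary \ref{cor:flux2}). The paper instead starts from the holomorphic-category isotopy $u_t$ ($t\in[0,1]$) of nonflat conformal minimal immersions (not of finite total curvature for $t>0$) with $\Flux_{u_t}=\Fcal_t$ supplied by \cite[Theorem 1.1]{AlarconLarusson2022complete}, then approximates the entire family $\{\di u_t\}_{t\in[0,1]}$ at once by a parametric algebraic spray via Theorem \ref{th:ahRunge1}, so no concatenation is needed.

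Second, and decisively, the claim that the algebraic approximant "preserves pole orders at $E$" is false as stated. Algebraic Runge approximation controls the sections only on the chosen compact $L\Subset M$ containing the homology basis; it says nothing about their behaviour near the ends $E=\overline M\setminus M$. An approximant arbitrarily close on $L$ could have no pole at some end, in which case the corresponding conformal minimal immersion is not complete. This is precisely the technical difficulty the paper's proof handles with a dedicated device: the spray is extended over small auxiliary discs $W_j\Subset D_j\setminus\{x_j\}$ near each end $x_j$ by the $t$-dependent multiple $\xi_j(t)f_0$ chosen so that \eqref{eq:xit} holds; after approximating on the enlarged Runge compact $L'=L\cup\{x'_1,\ldots,x'_m\}$, the maximum principle forces each $\tilde f_{\zeta,t}$ to have an effective pole at every end for all $\zeta$ and $t$. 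Without this pole-ensuring step your argument does not deliver completeness of the surfaces in the isotopy, which is the whole point of the theorem.
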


This is an analogue of \cite[Theorem 1.1]{AlarconForstneric2018Crelle},
which gives a similar statement for nonflat minimal surfaces
without the finite total curvature condition; 
see also \cite[Theorem 1.1]{AlarconLarusson2022complete}, 
which permits prescribing the flux of all the surfaces in the isotopy
by using the same tools. If the immersion $u_0:M\to\R^n$ in 
Theorem \ref{th:flux} is not complete (which means that it extends
harmonically to an end of $M$), the theorem still holds with the weaker
conclusion that the nonflat conformal minimal immersions 
of finite total curvature $u_t:M\to\R^n$, $t\in[0,1]$, are complete 
for $c\le t\le 1$ for any given $c\in (0,1)$.
This shows that the inclusion of $\CMI_*(M,\R^n)$ into the 
space $\CMI_0(M,\R^n)$ of all nonflat conformal minimal immersions 
of finite total curvature $M\to\R^n$ (including the incomplete ones) 
induces a surjection of path components. 
We show that this inclusion is in fact a weak homotopy equivalence 
(see Corollary \ref{cor:whe-extended}).  We also prove a parametric version of Theorem \ref{th:flux} (see Theorem \ref{th:flux2}).

%
%
Holomorphic null curves are a special type 
of directed holomorphic immersions of open Riemann surfaces to 
complex Euclidean spaces. A connected compact complex submanifold 
$Y$ of $\P^{n-1}$, $n\geq 2$, determines the punctured complex cone \begin{equation}\label{eq:A}
	A= \{(z_1,\ldots,z_n)\in \C^n_* : [z_1:\cdots:z_n]\in Y\}.
\end{equation}
Note that $A$ is smooth and connected, and its closure 
$\overline A=A\cup\{0\} \subset \C^n$ is algebraic by Chow's theorem.
The map $\pi: A \to Y$ given by $\pi(z_1,\ldots,z_n)=[z_1:\cdots:z_n]$ is an algebraic $\C^*$-bundle, and by adding the zero section we obtain the restriction to $Y$ of the hyperplane bundle on $\P^{n-1}$.
A holomorphic immersion $h:M\to\C^n$ from an open Riemann surface 
is said to be {\em directed by} $A$, or an {\em $A$-immersion}, 
if its complex derivative with respect to any local holomorphic coordinate 
on $M$  takes its values in $A$. Equivalently, the differential $dh=\di h$ 
is a section of the subbundle $\Acal$ with fibre $A$ of the vector bundle 
$(T^*M)^{\oplus n}$. When $A$ is the null quadric $\boldA$, 
an $A$-immersion $M\to\C^n$ is the same thing as a
holomorphic null immersion.

Holomorphic directed immersions were studied by Alarc\'on and Forstneri\v c 
\cite{AlarconForstneric2014IM}. Under the assumption 
that $A$ is an Oka manifold not contained in any hyperplane in $\C^n$, 
they proved an Oka principle with Runge and Mergelyan approximation
for holomorphic $A$-immersions
\cite[Theorems 2.6 and 7.2]{AlarconForstneric2014IM}.  
(By \cite[Theorem 5.6.5]{Forstneric2017E}, the cone $A$ 
in \eqref{eq:A} is an Oka manifold if and only if $Y$ is.) 
They also showed that every holomorphic $A$-immersion can be
approximated by holomorphic $A$-embeddings when $n\ge 3$,
and by proper holomorphic $A$-embeddings under some natural extra assumptions on the cone $A$ \cite[Theorem 8.1]{AlarconForstneric2014IM}.
In the subsequent paper \cite{AlarconCastro-Infantes2019APDE} by Alarc\'on and Castro-Infantes, interpolation was added to the picture.  
A parametric Oka principle for holomorphic immersions 
directed by an Oka cone was proved as 
\cite[Theorem 5.3]{ForstnericLarusson2019CAG}. 

Recently, algebraic $A$-immersions from 
affine Riemann surfaces into $\C^n$ were studied in \cite{AlarconLarusson2023regular} under the assumption 
that the cone $A$ is algebraically elliptic in the sense of Gromov 
\cite{Gromov1989} (see also \cite[Definition 5.6.13]{Forstneric2017E});
we recall this notion in Section \ref{sec:approximation}. Several important 
cones arising in geometric applications, 
in particular the null quadric $\boldA$, are algebraically elliptic.
The optimal known sufficient condition for the cone
$A$ \eqref{eq:A} on a projective manifold $Y\subset \P^{n-1}$
to be algebraically elliptic is given by a recent theorem of 
Arzhantsev, Kaliman, and Zaidenberg 
\cite[Theorem 1.3]{ArzhantsevKalimanZaidenberg2024}.  
They showed that $A$ is algebraically elliptic if $Y$ 
(with $\dim Y\geq 1$) is uniformly rational, 
meaning that $Y$ is covered by Zariski-open sets, each isomorphic 
to a Zariski-open subset of affine space.

We obtain further results on directed algebraic immersions. 
The following Runge approximation 
and jet interpolation theorem is proved in Section \ref{sec:AAJI}.
It strengthens the main implication (iii) $\Rightarrow$ (ii)  
of \cite[Theorem 1.1]{AlarconLarusson2023regular}, upgrading  
plain interpolation to jet interpolation. This requires a new idea.

%
%
\begin{theorem}   \label{th:AAJI}
Let $A\subset\C_*^n$, $n\geq 2$, be a connected smooth punctured 
complex cone \eqref{eq:A} which is algebraically elliptic and not contained in a 
hyperplane in $\C^n$.  Let $M$ be an affine Riemann
surface, $\mathcal A$ be the subbundle of $(T^*M)^{\oplus n}$ defined by 
$A$, $K$ be a compact holomorphically convex subset of $M$, 
$U$ be an open neighbourhood of $K$, and 
$h:U\to\C^n$ be a holomorphic $A$-immersion such that 
the homotopy class of continuous sections of $\mathcal A|_U\to U$ 
that contains $dh$ also contains the restriction to $U$ of an algebraic section 
$\vartheta:M\to \mathcal A$. Then $h$ can be approximated 
uniformly on a neighbourhood of $K$ by proper algebraic 
$A$-immersions $\tilde h:M\to\C^n$ agreeing with $h$ to a given 
finite order on any given finite set in $K$ 
such that $d \tilde h$ is homotopic to $\vartheta$ through algebraic 
sections of $\mathcal A\to M$.
\end{theorem}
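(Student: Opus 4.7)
I work at the level of $A$-valued differentials. Set $\phi := dh$. The plan is to construct an algebraic section $\tilde\phi$ of $\mathcal A \to M$ satisfying: (a) $\tilde\phi$ approximates $\phi$ uniformly on a neighbourhood of $K$; (b) $\tilde\phi$ agrees with $\phi$ to order $k-1$ at every point of a finite set $F \subset K$ (where $k$ is the prescribed order of jet interpolation for $\tilde h$); (c) every period $\int_C \tilde\phi$, $[C]\in H_1(M,\Z)$, vanishes; (d) $\tilde\phi$ has a pole of order at least $2$ at every end of $M$; and (e) $\tilde\phi$ is homotopic to $\vartheta$ through algebraic sections of $\mathcal A$. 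Given such $\tilde\phi$, pick a base point $x_0 \in K$ (placed in $F$ if $F\neq\emptyset$) and define $\tilde h(x) := h(x_0) + \int_{x_0}^x \tilde\phi$. Then (c) makes $\tilde h$ single-valued, (a) and (b) give the approximation and jet interpolation, and (d) forces $\tilde h$ to extend meromorphically to $\overline M$ with a pole at every end, hence to be proper.

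\medskip

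\emph{Step 1: algebraic Oka approximation.} Using the homotopy between $dh$ and $\vartheta|_U$ in $\mathcal A|_U$, extend $\vartheta$ through continuous sections to a continuous section of $\mathcal A \to M$ that matches $\phi$ to order $k-1$ on $F$ and is $C^0$-close to $\phi$ on a neighbourhood of $K$. Feeding this datum into the algebraic Oka approximation theorem for sections of algebraically elliptic submersions (Theorem \ref{th:ahRunge1} together with Corollary \ref{cor:flexible}, which supply jet interpolation on finite sets and prescribed pole orders at the ends) produces an algebraic section $\phi_0 : M \to \mathcal A$ satisfying (a), (b), (d), and (e). Only (c) may fail: the periods of $\phi_0$ along cycles of $M$ not representable in $U$ are a priori unconstrained.

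\medskip

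\emph{Step 2: killing periods with jet control.} By algebraic ellipticity, $A$ carries a dominating algebraic spray, so one may work with a finite family of algebraic vector fields $V_1,\ldots,V_N$ tangent to $A$ for which $\{V_j(a)\}_{j=1}^N$ spans $T_a A$ at every $a \in A$. Choose a finite collection of algebraic functions $f_1,\ldots,f_M : M \to \C$ vanishing to order $k$ at every point of $F$ and to high enough order at every end to preserve (d); these span an infinite-dimensional subspace of $\mathcal O(M)$ because $M$ is affine. Form the algebraic family of sections
\[
\phi_t \;=\; \Psi_{V_{j_L}}^{t_L f_{\alpha_L}} \circ \cdots \circ \Psi_{V_{j_1}}^{t_1 f_{\alpha_1}}(\phi_0), \qquad t=(t_1,\ldots,t_L)\in \C^L,
\]
where $\Psi_{V_j}^s$ is the time-$s$ fibrewise spray of $V_j$ acting on the fibre of $\mathcal A$. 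Each $\phi_t$ is an algebraic section of $\mathcal A$, still satisfies (b) and (d), and $s\mapsto \phi_{st}$ realises (e). Consider the period map
\[
\Phi : \C^L \longrightarrow \mathrm{Hom}\bigl(H_1(M,\Z),\C^n\bigr), \qquad \Phi(t) : [C] \longmapsto \textstyle\int_C \phi_t.
\]
If $d\Phi(0)$ is surjective onto its finite-dimensional target, the implicit function theorem supplies $t^*$ near $0$ with $\Phi(t^*) = 0$; then $\tilde\phi := \phi_{t^*}$ yields (c), while smallness of $t^*$ preserves (a).

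\medskip

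The main obstacle, and where a new idea beyond \cite{AlarconLarusson2023regular} is required, is guaranteeing the surjectivity of $d\Phi(0)$ under the jet-vanishing constraints on the multipliers $f_\alpha$. The differential pairs cycles $C$ against $\C^n$-valued $1$-forms of the shape $f_\alpha V_j(\phi_0)$, so one must show that, as $(j,\alpha)$ ranges over the admissible choices, these $1$-forms span $\mathrm{Hom}(H_1(M,\Z),\C^n)$ via period pairing. The dominating property of the $V_j$'s reduces this to a Runge-type density statement for period functionals against algebraic functions on the affine surface $M$ vanishing to order $k$ on $F$ and appropriately at the ends: plausible because that subspace of $\mathcal O(M)$ is infinite-dimensional, but verifying it compatibly with (a)--(e) is the technical crux. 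Once it is in place, integrating $\tilde\phi$ from $x_0$ gives the required proper algebraic $A$-immersion $\tilde h$.
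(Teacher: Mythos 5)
There is a genuine gap in Step 1. You claim that Theorem \ref{th:ahRunge1} together with Corollary \ref{cor:flexible} ``supply jet interpolation on finite sets and prescribed pole orders at the ends,'' but neither result does this. Theorem \ref{th:ahRunge1}~(d) gives agreement with a fixed algebraic section on a subvariety $X_0$, not interpolation of the jet of the \emph{holomorphic} data $\phi$ (which is generally not algebraic near $F$); and neither result controls pole orders at the ends. So after your Step~1 you cannot assume (b) or (d) hold for $\phi_0$, and Step~2, as you acknowledge, is built on the unresolved surjectivity of $d\Phi(0)$ while also depending on (b) being in place already to be preserved by your choice of multipliers.

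The paper's proof circumvents both issues by reversing the order: it first builds, at the \emph{holomorphic} level on a neighbourhood of a Runge compact $L\supset K$, a spray $f_{\zeta,\tau}$ which is simultaneously universal for the $k$-jets at $Q$ (the $\zeta$-parameters, constructed via Siu's Stein neighbourhood theorem and Oka--Grauert triviality of the normal bundle of the graph) and period-dominating with the $k$-jets at $Q$ frozen (the $\tau$-parameters, via \cite[Lemma~3.2.1]{AlarconForstnericLopez2021}). It then algebraizes the \emph{entire spray} at once with Theorem \ref{th:ahRunge1}, which preserves the domination property by $C^0$-closeness, and applies the implicit function theorem to solve simultaneously for jet equality and period vanishing. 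Properness is achieved separately, not by Step~1, by extending the spray to small discs near the ends with sufficiently large values and invoking the maximum principle. The ``new idea'' the paper alludes to is precisely the universal $k$-jet deformation family; your proposal does not supply a substitute for it, and the period-surjectivity question you leave open is sidestepped by pre-establishing period domination holomorphically before algebraization.
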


For the null quadric $\boldA$ \eqref{eq:nullq}, Theorem \ref{th:AAJI} was first 
proved by Alarc\'on and L\'opez  \cite[Theorem 1.2]{AlarconLopez2022APDE} 
using fairly technical results from the function theory of Riemann surfaces 
and the special geometry of the cone. 
In this case, the hypothesis that $dh$ be homotopic 
to the restriction of an algebraic section of $\Acal$ is always fulfilled
\cite[Proposition 2.3]{AlarconLarusson2023regular}. 
Their result also gives interpolation of poles at some ends of $M$
and includes Mergelyan approximation on admissible sets, as opposed 
to mere Runge approximation as in Theorem \ref{th:AAJI}. 
The latter is well understood and we do not repeat it here.

We recall the following notion from 
\cite[Definition 2.2]{AlarconForstneric2014IM}.
Let $A\subset\C_*^n$, $n\geq 2$, be a smooth punctured complex cone. 
We consider the tangent space $T_xA\subset T_x\C^n$ at a point $x\in A$ 
as a $\C$-linear subspace of $\C^n$.  Let $M$ be an open Riemann surface 
and $\theta$ be a nowhere vanishing holomorphic 1-form on $M$.
A holomorphic map $f:M\to A$ is {\em nondegenerate} if the linear span of the 
tangent spaces $T_{f(x)}A$, $x\in M$, equals $\C^n$.
A holomorphic $A$-immersion $h:M\to \C^n$ is nondegenerate  
if the holomorphic map $f=dh/\theta:M\to A$ is nondegenerate.
A holomorphic 1-form $\phi$ on $M$ with values in $A$ is 
nondegenerate if the holomorphic map $\phi/\theta:M\to A$ 
is nondegenerate.  Recall that a holomorphic null immersion is nondegenerate 
if and only if it is nonflat; see \cite[Lemma 2.5.3]{AlarconForstnericLopez2021} 
and the references therein. 

Given an open Riemann surface $M$, we denote by $\Iscr_{\rm nd}(M,A)$ 
the space of nondegenerate holomorphic $A$-immersions 
$M\to \C^n$, $n\geq 2$. Let $\theta$ be as above.
Assuming that the cone $A$ is an Oka manifold, it was proved in 
\cite[Theorem 5.6]{ForstnericLarusson2019CAG} 
(see also \cite[Theorem 3.12.7]{AlarconForstnericLopez2021}) 
that the map $\Iscr_{\rm nd}(M,A)\to \Oscr(M,A)$, $h\mapsto dh/\theta$, 
is a weak homotopy equivalence, and is a homotopy
equivalence if $M$ is of finite topological type, that is, if $H_1(M,\Z)$
is finitely generated. 

It is natural to ask to what extent this holds in the algebraic category, 
with $M$ an affine Riemann surface and $A\subset \C^n_*$, $n\ge 2$,
a connected smooth algebraic cone not contained in any hyperplane. 
If $A$ is algebraically elliptic, Theorem \ref{th:AAJI} shows that the map 
$\Iscr_*(M,A)\to \Ascr^1(M,A)$, $h\mapsto \di h$, 
from the space  $\Iscr_*(M,A)$ 
of proper nondegenerate algebraic $A$-immersions $M\to\C^n$ 
to the space $\Ascr^1(M,A)$ of algebraic 1-forms on $M$ with values in $A$ 
induces a surjection of path components. 
The next question is whether this map is injective on path components.
A partial answer is given by Theorem \ref{th:a-homotopy}.
Under the stronger assumptions that the cone $A$ is 
flexible, we have the following homotopy principle 
for nondegenerate algebraic $A$-immersions, 
analogous to Theorem \ref{th:whe1}. Recall (see Arzhantsev et al.\ 
\cite{ArzhantsevFlennerKalimanKutzschebauchZaidenberg2013DMJ}) 
that an algebraic manifold is said to be {\em flexible} if it carries finitely 
many complete algebraic vector fields with algebraic flows
spanning the tangent bundle at every point.

%
%
\begin{theorem}\label{th:whe2} 
Let $M$ be an affine Riemann surface, and let $A\subset \C^n_*$, 
$n\ge 2$, be a flexible smooth connected cone 
not contained in any hyperplane. Then the map  
\begin{equation}\label{eq:inclusion}
	\Iscr_*(M,A) \to \Ascr^1(M,A),\quad h\mapsto dh,
\end{equation}
is a weak homotopy equivalence.
\end{theorem}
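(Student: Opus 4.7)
My plan is to follow the scheme of the proof of Theorem \ref{th:whe1}, substituting results specific to the null quadric by the corresponding statements for a flexible cone $A$. Crucially, every flexible algebraic manifold is algebraically elliptic, so Theorem \ref{th:AAJI} applies to $A$. I will verify the standard lifting criterion for a weak homotopy equivalence: for every CW pair $(P,P_0)$, every continuous family $h_p\in \Iscr_*(M,A)$, $p\in P_0$, and every continuous extension $\phi:P\to\Ascr^1(M,A)$ of $p\mapsto dh_p$, I will construct a continuous extension $H:P\to\Iscr_*(M,A)$ of $p\mapsto h_p$ together with a homotopy from $dH$ to $\phi$ in $\Ascr^1(M,A)$ stationary on $P_0$. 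Since nondegenerate $A$-valued $1$-forms are dense in $\Ascr^1(M,A)$ (an algebraic analogue of Proposition \ref{prop:nondegenerate}, which uses that $A$ is not contained in a hyperplane), I may further assume that each $\phi_p$ is nondegenerate.

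I would carry out the lift first in the holomorphic category. Since $A$ is an Oka manifold not contained in a hyperplane, the parametric Oka principle of \cite[Theorem 5.6]{ForstnericLarusson2019CAG}, in its equivalent formulation as the weak homotopy equivalence $\Iscr_{\rm nd}(M,A)\to\Oscr^1(M,A)$, $h\mapsto dh$, applied to the given data produces a continuous family $\tilde h_p:M\to\C^n$, $p\in P$, of nondegenerate holomorphic $A$-immersions with $\tilde h_p=h_p$ for $p\in P_0$, together with a homotopy $\phi_{p,t}$ from $\phi_p$ to $d\tilde h_p$ in $\Oscr^1(M,A)$ that is stationary on $P_0$.

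I would then convert the family $\tilde h_p$ into a proper algebraic family, using a parametric version of Theorem \ref{th:AAJI} with jet interpolation on $P_0$. The hypothesis that $d\tilde h_p$ lie in the homotopy class of an algebraic section of $\Acal\to M$ is automatic: $d\tilde h_p$ is already homotopic to the algebraic section $\phi_p$ via $\phi_{p,t}$. The parametric theorem would yield a continuous family $H_p\in \Iscr_*(M,A)$, $p\in P$, with $H_p=h_p$ for $p\in P_0$, approximating $\tilde h_p$ uniformly on a prescribed large compact $\Oscr(M)$-convex subset $K\subset M$. Close enough approximation implies that $dH_p$ is homotopic to $d\tilde h_p$ through holomorphic sections of $\Acal$, and therefore homotopic to $\phi_p$; since $dH_p$ is algebraic, this final homotopy can then be arranged inside $\Ascr^1(M,A)$ stationary on $P_0$ by a second appeal to the algebraic approximation theorem applied to the time variable, completing the lift.

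The hard part will be the parametric, jet-interpolation version of Theorem \ref{th:AAJI}. The non-parametric theorem proved in Section \ref{sec:AAJI} is not enough on its own, and I would have to revisit its proof to ensure that the construction depends continuously on the parameter and preserves first-order jets along a finite interpolation set for $p\in P_0$. The underlying approximation techniques of Forstneri\v c \cite{Forstneric2006AJM} and their refinements in Theorems \ref{th:ahRunge1} and \ref{th:ahRunge2} and Corollary \ref{cor:flexible} are intrinsically parametric: flexibility of $A$ supplies finitely many complete algebraic vector fields whose flows span $T_xA$ at every point, and these assemble into continuous families of algebraic dominating sprays depending on the parameter. Jet interpolation on $P_0$ is enforced by requiring the sprays to vanish to the appropriate order along the prescribed interpolation set, a standard device in parametric Oka theory. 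Once this is in place, Theorem \ref{th:whe2} follows from a reasonably direct combination of the holomorphic parametric Oka principle with the parametric algebraic approximation.
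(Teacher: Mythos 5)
Your overall strategy is the right one, and you correctly identify the two pillars the paper itself uses: the holomorphic parametric Oka principle of \cite{ForstnericLarusson2019CAG} and the fact that flexibility yields algebraic ellipticity with a \emph{trivial} spray bundle, which is what makes the algebraic approximation work with a compact Hausdorff parameter (Theorem \ref{th:ahRunge2}, Corollary \ref{cor:flexible}). However, the two places where you defer the work contain genuine gaps. First, the ``parametric, jet-interpolation version of Theorem \ref{th:AAJI}'' that you rely on is exactly where all the content lies (parametric period domination, the pole device ensuring properness, and continuity in $p$), and your proposed mechanism for getting $H_p=h_p$ for $p\in P_0$ is not the right one: jet interpolation at a finite subset of $M$ cannot force equality of the maps on all of $M$. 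What actually gives exact agreement over $P_0$ is that the sections there are already algebraic and the spray-based approximation scheme can hold them \emph{fixed} -- this is conclusion (c) of Theorem \ref{th:ahRunge2}, which requires the homotopy being approximated to be stationary over $P_0$ and, in addition, requires the implicit-function-theorem correction $\zeta(p)$ at the end to vanish for $p\in P_0$.

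Second, your closing step -- connecting $dH_p$ to $\phi_p$ inside $\Ascr^1(M,A)$, rel $P_0$, ``by a second appeal to the algebraic approximation theorem applied to the time variable'' -- does not go through as stated. The approximation theorems of Section \ref{sec:approximation} fix the homotopy only at the $t=0$ end (where the data are the algebraic core sections) and over $Q$; they do \emph{not} fix the $t=1$ end. So the algebraic homotopy you obtain starts at $\phi_p$ but terminates at an algebraic form that is only close to $dH_p$; its periods need not vanish, so it need not integrate to an immersion, and the lift is lost. Bridging that last small gap inside $\Ascr^1(M,A)$ would require either local contractibility of $\Ascr^1(M,A)$ -- precisely the open problem discussed in Section \ref{sec:topological} -- or a period-dominating correction of the terminal forms, at which point your intermediate family $H_p$ becomes superfluous. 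This is in fact how the paper proceeds: it never constructs a family of algebraic immersions to be matched afterwards, but works with $1$-forms throughout (Theorem \ref{th:hp1}), uses \cite[Theorem 5.3]{ForstnericLarusson2019CAG} to kill periods holomorphically, attaches a period-dominating spray and the pole device, approximates by algebraic sections via Corollary \ref{cor:flexible} keeping the $t=0$ end and the $P_0$-slice fixed, and only then applies the implicit function theorem so that the terminal algebraic forms have exactly vanishing periods and poles at all ends, hence integrate to the required proper immersions agreeing with the given ones on $P_0$. Reorganizing your argument along these lines closes both gaps.
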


The only way in which flexibility is used in this paper is that it implies
algebraic ellipticity with a trivial spray bundle; see 
Theorem \ref{th:ahRunge2} and the proof of Corollary \ref{cor:flexible}. 
The null quadric $\boldA$ is flexible
(see \cite[Proposition 1.15.3]{AlarconForstnericLopez2021},
which is a special case of a result of Kaliman
and Zaidenberg \cite[Section 5]{KalimanZaidenberg1999TG}),
so the part of Theorem \ref{th:whe1} pertaining to the
map $\Re \NC_*(M,\C^n)\to \Ascr^1(M,\boldA)$, $u\mapsto \di u$, 
is a special case of Theorem \ref{th:whe2}. The proof of Theorem \ref{th:whe2}
is obtained by exactly the same argument as in the 
case $A=\boldA$, so we omit the details.
The large cone $A=\C^n_*$, $n\ge 2$, is also flexible, so 
Theorem \ref{th:whe2} applies to it. 
Further examples are the cones $A$ defined by a flag manifold 
or a toric manifold $Y$ (see Arzhantsev et al.\  
\cite{ArzhantsevZaidenbergKuymzhiyan2012}). 
Other examples are given in \cite{Perepechko2013,MichalekPerepechkoSuss2018,ProkhorovZaidenberg2023}.

%
%
The paper is organised as follows.
In Section \ref{sec:approximation} we prove the algebraic
homotopy approximation theorems used in the paper. 
In Section \ref{sec:AAJI} we prove Theorem \ref{th:AAJI},
and in Section \ref{sec:flux} we prove Theorem \ref{th:flux}.
These results concern the nonparametric situation, and the main ideas 
are explained more easily in this case.
Theorem \ref{th:whe1} is proved in Section \ref{sec:whe1}
as a consequence of the h-principle in Theorem \ref{th:hp1}; 
essentially the same proof yields Theorem \ref{th:whe2}. 
In Section \ref{sec:flux2}, we strengthen the results of 
Section \ref{sec:whe1} by including control of flux;  
see Theorem \ref{th:flux2}. We show 
in particular that the flux homomorphism 
$\Flux: \CMI_*(M,\R^n)\to H^1(M,\R^n)$ is a Serre fibration for every 
affine Riemann surface $M$; see Corollary \ref{cor:flux2}. 
In Section \ref{sec:A-homotopies},   
we prove a special case of Theorem \ref{th:whe2} assuming only 
that the cone $A$ is algebraically elliptic.
To this end, we introduce the notion of an algebraic homotopy.
In Section \ref{sec:topological}, we show that stronger 
conclusions regarding the homotopy type of the spaces
under consideration would follow from local contractibility of the spaces 
$\Ascr^1(M,A)$ with respect to the compact-open topology, where $M$
is an affine Riemann surface and $A\subset \C^n_*$
is a flexible punctured cone. Whether or not the space 
$\Ascr^1(M,A)$ is locally contractible in general remains 
an important open problem. 

%
%
%
%
\section{Algebraic homotopy approximation theorems}
\label{sec:approximation}

\noindent
In this section we prove results on approximating families of holomorphic
sections of an algebraically subelliptic or flexible submersion over an affine
variety by families of algebraic sections; 
see Theorems \ref{th:ahRunge1}, \ref{th:ahRunge2} and 
Corollary \ref{cor:flexible}.
These results are applications and extensions 
of a theorem of Forstneri\v c \cite[Theorem 3.3]{Forstneric2006AJM} 
(see also \cite[Theorem 6.15.3]{Forstneric2017E}). 
They provide some of the main tools used in the paper.

Let $\hgot: Z\to X$ be an algebraic submersion from an algebraic
variety $Z$ onto an affine algebraic variety $X$.
We recall the notions of algebraic ellipticity and subellipticity
introduced by Gromov \cite[1.1.B]{Gromov1989}; 
see also \cite[Definition 6.1.1]{Forstneric2017E}.

An algebraic fibre spray on $Z$ is a triple $(E,\pi,s)$ where
$\pi:E\to Z$ is an algebraic vector bundle and $s:E\to Z$
is an algebraic map (the spray map) such that 
\[
    s(0_z)=z\ \ {\rm and}\ \  s(E_z) \subset Z_{\hgot(z)}= \hgot^{-1}(\hgot(z))
    \ \ \text{for every $z\in Z$}.
\]
Here, $E_z=\pi^{-1}(z)$ is the fibre of $E$ over $z$ and $0_z\in E_z$
denotes the origin. A fibre spray $(E,\pi,s)$ is said to be 
{\em dominating at a point} $z\in Z$ if the differential 
$d s_{0_z} : T_{0_z} E \to T_z Z$ maps the subspace 
$E_z\subset T _{0_z} E$ surjectively onto the vertical tangent space 
$\ker d \hgot_z$. The fibre spray is said to be
{\em dominating} if it is dominating at every point $z\in Z$.
A family of fibre sprays $(E_j,\pi_j,s_j)$ $(j=1,\ldots,m)$ on $Z$ 
is said to be dominating at the point $z\in Z$ if
\begin{equation}\label{eq:fibre-domination}
    (ds_1)_{0_z}(E_{1,z}) + (ds_2)_{0_z}(E_{2,z})+ \cdots
                     + (d s_m)_{0_z}(E_{m,z})= \ker(d\hgot_z),
\end{equation}
and to be {\em dominating} if this holds at every point $z\in Z$. 
The algebraic submersion $\hgot:Z\to X$
is {\em algebraically elliptic} if it admits a dominating algebraic fibre spray,
and is {\em algebraically subelliptic} if it admits a dominating family 
of such sprays. An algebraic manifold $Y$ is algebraically
(sub-)elliptic if the submersion from $Y$ to a point is.
Recently, Kaliman and Zaidenberg  \cite{KalimanZaidenberg2024FM}
showed that algebraic ellipticity and algebraic subellipticity are equivalent. 
However, the proofs of the results in this section do not
depend on this fact, and it is usually easier to find dominating families 
of sprays than a dominating spray.

We begin with the following approximation result for sections
of algebraically subelliptic submersions.

%
%
\begin{theorem}
\label{th:ahRunge1}
Let $\hgot: Z\to X$ be an algebraically subelliptic submersion from an algebraic
manifold $Z$ onto an affine algebraic manifold $X$. Fix a distance
function $\dist$ on $Z$ inducing its standard Hausdorff topology. 
Let $K\subset X$ be a compact $\Oscr(X)$-convex subset, 
$B\subset \C^N$ be a compact convex set containing the origin, 
$U\supset K$ and $V\supset B$ be open neighbourhoods,
and $f_{\zeta,t} : U \to Z|_U$ $(\zeta\in V,\ t\in [0,1]^n,\ n\geq 1)$ 
be a continuous family of holomorphic sections of $\hgot:Z|_U \to U$   
depending holomorphically on $\zeta\in V$ 
such that $f_{0,0}$ is homotopic through sections of $Z|_U\to U$
to the restriction of an algebraic section $X\to Z$. 
Given $\epsilon >0$, there is an algebraic map 
$F:X\times \C^N \times \C^n \to Z$ satisfying the following conditions:
\begin{enumerate}[\rm (a)] 
\item $\hgot(F(x,\zeta,t))=x$ for all $x\in X$, $\zeta\in\C^N$, and $t\in \C^n$.
\item $\dist(F(x,\zeta,t),f_{\zeta,t}(x))<\epsilon$ for every 
$x\in K$, $\zeta\in B$, and $t\in [0,1]^n$.
\item If $f_{0,0}=f|_U$ for an algebraic section $f:X\to Z$, then $F$
can be chosen such that $F(\cdotp, 0,0)=f$. 
\item Assume that {\rm (c)} holds. 
If $X_0$ is a closed algebraic subvariety of $X$ and 
$f_{0,t}$ is independent of $t$ on $X_0\cap U$, 
then $F$ can be chosen such that in addition $F(\cdotp,0,t)$ 
agrees with $f$ on $K\cap X_0$ for all $t\in \C^n$.
\end{enumerate}
\end{theorem}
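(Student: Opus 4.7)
The plan is to reduce Theorem \ref{th:ahRunge1} to approximation of a single holomorphic section and then convert the continuous $t$-parameter to an algebraic one via spray parametrisation and polynomial approximation. First I would absorb the holomorphic parameter $\zeta$ into the base: replacing $\hgot : Z\to X$ by its pullback $\tilde \hgot : Z\times\C^N \to X\times\C^N$ along the projection $X\times\C^N\to X$, the given dominating family of algebraic fibre sprays pulls back to a dominating family, so $\tilde\hgot$ is again algebraically subelliptic over the affine manifold $X\times\C^N$. Since $K\times B$ is $\Oscr(X\times\C^N)$-convex, Forstneri\v c's algebraic Oka principle \cite[Theorem 3.3]{Forstneric2006AJM} (and its interpolation refinement) applied to the section $(x,\zeta)\mapsto (f_{\zeta,0}(x),\zeta)$ produces an algebraic map $F_0:X\times\C^N\to Z$ with $\hgot\circ F_0(x,\zeta)=x$ that approximates $f_{\zeta,0}$ uniformly on $K\times B$, lies in the prescribed homotopy class, and satisfies $F_0(\cdotp,0)=f$ under hypothesis (c) and $F_0|_{X_0\times\{0\}}=f|_{X_0}$ under hypothesis (d).

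To incorporate the $t$-dependence, I would next use subellipticity a second time to construct, on a neighbourhood of the image of $F_0$ in $Z\times\C^N$, a \emph{composed algebraic spray} $\sigma:\mathcal E\to Z\times\C^N$, where $\mathcal E\to X\times\C^N$ is an algebraic vector bundle and $\sigma$ is fibre-preserving over $X\times\C^N$ with $\sigma(0_{(x,\zeta)})=(F_0(x,\zeta),\zeta)$ and surjective vertical differential along the zero section (this is the standard iterated-spray construction in the proof of the Oka principle for subelliptic submersions). After possibly shrinking $B$, every section of $\tilde Z$ sufficiently close to $F_0$ on $K\times B$ factors uniquely as $\sigma\circ\xi$ for a holomorphic section $\xi$ of $\mathcal E$ on $K\times B$. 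Hence, shrinking $\epsilon$ if needed, the family $f_{\zeta,t}$ is represented on $K\times B$ by a continuous family $t\mapsto \xi_t$ of holomorphic sections of $\mathcal E$, with $\xi_0$ as close to the zero section as we like.

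The last step is to approximate $\xi_t$ by a map that is algebraic jointly in $(x,\zeta,t)$. Weierstrass polynomial approximation on the cube $[0,1]^n\subset\C^n$ replaces $t\mapsto \xi_t$ by a $t$-polynomial whose coefficients are holomorphic sections of $\mathcal E$ over $K\times B$; then the algebraic Oka--Weil theorem (the linear case of \cite[Theorem 3.3]{Forstneric2006AJM} applied to $\mathcal E\to X\times\C^N$) approximates each of these finitely many coefficients by algebraic sections, producing an algebraic section $\Xi$ of the pullback of $\mathcal E$ to $X\times\C^N\times\C^n$. Setting $F(x,\zeta,t)=\pi_Z(\sigma(\Xi(x,\zeta,t)))$, where $\pi_Z:Z\times\C^N\to Z$ is the first projection, yields (a) and (b). Conditions (c) and (d) are carried along by using the interpolation forms of the algebraic Oka--Weil theorem to make the coefficients of $\Xi$ vanish at $\zeta=0$ for the non-constant powers of $t$ (ensuring $F(\cdotp,0,0)=f$) and additionally vanish on $X_0$ there (ensuring $F(\cdotp,0,t)|_{K\cap X_0}=f|_{K\cap X_0}$).

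The main obstacle is this last step: performing a Weierstrass-type approximation in the real parameter $t$ inside a parametric algebraic approximation in $(x,\zeta)$, while maintaining vanishing along the closed algebraic subvariety $X_0\times\{0\}\subset X\times\C^N$. The technical content is that the module of algebraic sections of $\mathcal E$ vanishing on a prescribed closed algebraic subvariety is dense, on $\Oscr$-convex compacta, in the corresponding module of holomorphic sections. This follows from affine-algebraic ideal theory together with the interpolation form of the algebraic Oka principle for subelliptic submersions used in the first step, and is the place where all the hypotheses enter simultaneously.
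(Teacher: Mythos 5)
Your opening moves are fine and parallel the paper's: absorbing the holomorphic parameter $\zeta$ into the base (the paper does this too, though at the end rather than at the start, via an auxiliary interpolation parameter $s$), checking that $K\times B$ is $\Oscr$-convex and that $\tilde\hgot$ inherits algebraic subellipticity, and applying the algebraic Oka principle to the $t=0$ slice. The reduction of ``merely homotopic'' to ``restricts from an algebraic section'' is handled differently --- the paper uses an explicit affine diffeomorphism of $\R^n$ and a deformation retraction to reparametrise the cube --- but your version (apply the Oka principle directly to $f_{\zeta,0}$ and absorb the homotopy hypothesis there) is workable in principle.

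The gap is in your treatment of the $t$-parameter, and you have misidentified where the real difficulty lies. You construct an iterated algebraic spray $\sigma:\mathcal E\to Z\times\C^N$ centred on the graph of $F_0$, and then claim that ``shrinking $\epsilon$ if needed, the family $f_{\zeta,t}$ is represented on $K\times B$ by a continuous family $t\mapsto\xi_t$ of holomorphic sections of $\mathcal E$.'' This does not follow. The spray factorisation $g=\sigma\circ\xi$ is available only for sections $g$ lying in a tubular neighbourhood of the central section $F_0$, and $F_0$ is close to $f_{\zeta,0}$, not to $f_{\zeta,t}$ for $t\neq 0$. Shrinking $\epsilon$ tightens the approximation of $F_0$ to $f_{\zeta,0}$; it does nothing to shrink the diameter of the given homotopy, which is fixed and can be arbitrarily large in the compact-open topology. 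There is no reason why $f_{\zeta,t}(x)$ should even lie in the image of $\sigma|_{E_{(x,\zeta)}}$ for $t$ away from $0$, let alone admit a continuous global lift. The issue you flag as ``the main obstacle'' --- Weierstrass approximation in $t$ compatible with vanishing along $X_0\times\{0\}$ --- is standard affine module theory and is not where the hypotheses bite; the genuine difficulty is that the lift $\xi_t$ does not exist in one step.

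The correct remedy is to subdivide $[0,1]^n$ into small cells and push the algebraic section forward cell by cell, re-centring the spray at each step, exactly as in the proof of \cite[Theorem 3.1]{Forstneric2006AJM} for $n=1$ and the inductive extension to $n>1$ in \cite[Theorem 4.2]{ForstnericLarusson2024Oka1}. The paper avoids reproving any of this by simply invoking those results for the basic case and then layering the reduction tricks (the affine diffeomorphism to reach an algebraic starting section, and the auxiliary $s$-parameter to attach $\zeta$) on top. Your outline would become a valid proof if the single-shot spray lift in the middle were replaced by that finite subdivision-and-iteration argument, but as written the step from ``$F_0$ exists'' to ``the whole family lifts'' is unjustified.
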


If we wish to ignore $B$, that is, holomorphic dependence on a parameter in $B$ is not required, then we take $N=0$ and $\C^N=B=V$ to be a point.

\begin{proof} 
The basic case when $n=1$, $N=0$ (so $\C^N=B=V$ is a point), 
$f_0$ extends to an algebraic section $f:X\to Z$, and without condition 
(d) is given by \cite[Theorem 3.1]{Forstneric2006AJM}
(see also \cite[Theorem 6.15.3]{Forstneric2017E}).  
An inspection of the proof also gives part (d); 
cf.\ \cite[Theorem 6.4]{Forstneric2023Indag}. 
See also Remark \ref{rem:Mergelyan} below.
The multiparameter case $t\in [0,1]^n$ with $n>1$ and $f_0=f|_U$ for an
algebraic section $f:X\to Z$ is obtained by induction on the number $n$ 
of parameters $t=(t_1,\ldots,t_n)$; see 
\cite[Theorem 4.2]{ForstnericLarusson2024Oka1}.

If $f_0$ is merely homotopic to the restriction $f|_U$ of an algebraic
section $f:X\to Z$ (the assumption in parts (a) and (b) of
the theorem), we proceed as follows, still assuming that 
$B$ is a point.  By shrinking $U$ around $K$ 
we may assume that $U$ is a Stein domain. 
By the Oka principle, the homotopy from $f_0$ to $f|_U$ 
can be chosen to consist of holomorphic sections 
\cite[Theorem 5.4.4]{Forstneric2017E}.
We parametrise this homotopy by the segment 
$T=\{(s,\ldots,s)\in \R^n: -1\le s\le 0\}$, so that $f|_U$ corresponds to the 
point $(-1,\ldots,-1)\in T$ and $f_0$ to the origin $0=(0,\ldots,0)$. 
Let $C\subset \R^n$ denote the convex hull
of $T\cup [0,1]^n$. There is an affine diffeomorphism
$\psi:\R^n\to\R^n$ such that $\psi([0,1]^n)=C$, 
$\psi(0,\ldots,0)=(-1,\ldots,-1)$, and $\psi$ fixes the 
remaining vertices of $[0,1]^n$. Note that $\psi$ complexifies
to a complex affine isomorphism $\psi:\C^n\to \C^n$.
Clearly, there is a strong deformation retraction 
$\tau:C \to T\cup [0,1]^n$. In particular, $\tau$ restricts to the identity 
map on $T\cup [0,1]^n$. Let $L=\psi^{-1}([0,1]^n) \subset [0,1]^n$. 
The composition $\phi=\tau\circ \psi: [0,1]^n\to C$ is a surjective map with 
$\tau\circ \psi(0,\ldots,0)=(-1,\ldots,-1)$, which maps $L$ onto
$[0,1]^n$ by $\psi$ (since $\tau$ is the identity on $[0,1]^n$). 
We now apply the already proved case of the theorem 
to the homotopy $\tilde f_t =f_{\phi(t)}:U\to Z|_U$ with $t\in [0,1]^n$
to approximate it by the restriction of an algebraic map
$\wt F:M\times \C^n\to Z$. Then the algebraic map $F:M\times \C^n\to Z$
given by $F(x,t) = \wt F(x,\psi^{-1}(t))$ satisfies conditions (a) and (b) 
in the theorem, while conditions (c) and (d) are vacuous.

Assume now that $N>0$. We may take $V\supset B$ to be an open convex 
domain in $\C^N$. Note that a family of holomorphic sections 
$f_{\zeta,t}:U\to Z|_U$ $(t\in [0,1]^n)$ of $\hgot:Z|_U\to U$, 
depending holomorphically on $\zeta\in V$ and continuously on $t\in [0,1]^n$, 
is the same thing as a homotopy of holomorphic sections 
$\tilde f_t$ $(t\in [0,1]^n)$ of the algebraic submersion 
\begin{equation}\label{eq:tilde-h}
	\tilde \hgot: \wt Z=Z\times \C^N\to \wt X = X\times \C^N 
\end{equation}
over the domain $U\times V\subset \wt X$, where 
\[
	\tilde \hgot(z,\zeta)=(\hgot(z),\zeta)\ \ \text{and}\ \ 
	\tilde f_t(x,\zeta) =(f_{\zeta,t}(x),\zeta)
	\ \ \text{for $x\in U$ and $\zeta\in V$}.
\]  
Note that the set $K\times B$ is holomorphically convex in $M\times \C^N$.
By trivially extending the algebraic fibre sprays $(E_j,\pi_j,s_j)$ in the 
hypothesis of the theorem (see \eqref{eq:fibre-domination}) to algebraic 
fibre sprays on $\wt Z$ we see that the extended submersion 
$\tilde \hgot:\wt Z\to \wt X$ is also algebraically subelliptic. 
We introduce an extra parameter $s\in [0,1]$ and consider the
continuous family of holomorphic sections 
$f'_{s,t}:U\times V\to \wt Z$ of $\tilde \hgot$ defined by 
\[
	f'_{s,t}(x,\zeta) = (f_{s\zeta,t}(x),\zeta) \in \wt Z\ \  
	\text{for}\ \ x\in U,\ \zeta\in V,\ s\in\R,\ t\in [0,1]^{n}.
\]
Note that $f'_{1,t}=\tilde f_{t}$, $f'_{0,t}(x,\zeta)=(f_{0,t}(x),\zeta)$, and 
$f'_{0,0}(x,\zeta)=(f_{0,0}(x),\zeta)$ extends to the algebraic section 
$(x,\zeta)\mapsto (f(x),\zeta)$ of the submersion \eqref{eq:tilde-h}.
The already established case of the theorem with 
approximation on $K\times B\times [0,1]^{n+1}$ yields an algebraic
map $\wt F :X\times \C^N \times  \C^{n+1} \to \wt Z$ satisfying
the conclusion of the theorem with respect to the homotopy $f'_{s,t}$ 
(with $(s,t)\in [0,1]^{n+1}$)  of holomorphic sections of the submersion 
\eqref{eq:tilde-h}. Setting $s=1$ and postcomposing $\wt F$ by the trivial
projection $\wt Z=Z\times \C^N\to Z$ yields an algebraic map
$F : X\times \C^N  \times \C^{n} \to Z$ satisfying the 
theorem with respect to the submersion $\hgot:Z\to X$.
\end{proof}

\begin{proposition}
Every algebraic fibre bundle $\hgot : Z\to X$  with 
algebraically elliptic fibre is algebraically subelliptic.
Hence, Theorem \ref{th:ahRunge1} applies in this case.
\end{proposition}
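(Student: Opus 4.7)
The plan is to adapt Gromov's standard construction to the algebraic setting, using the affineness of $X$ to produce algebraic cutoff functions.

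First, since $X$ is affine, I would cover it by finitely many basic affine open sets $U_i = X_{f_i} = \{x\in X: f_i(x)\neq 0\}$, $f_i\in\Oscr(X)$, $i=1,\ldots,m$, over which the algebraic bundle $\hgot: Z\to X$ trivialises. Fix algebraic trivialisations $\phi_i: Z|_{U_i}\xrightarrow{\sim} U_i\times F$ and let $p_i: Z|_{U_i}\to F$ be the composition of $\phi_i$ with the second projection. Let $(E_F,\pi_F,s_F)$ be the given dominating algebraic spray on the fibre $F$. Pulling back along $p_i$ produces an algebraic vector bundle $\tilde E_i := p_i^*E_F\to Z|_{U_i}$ and a dominating fibre spray $\tilde s_i(z,e) := \phi_i^{-1}(\hgot(z), s_F(e))$ on $Z|_{U_i}$ over $U_i$.

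The main step is to extend each $(\tilde E_i,\tilde s_i)$ to a global algebraic fibre spray on $Z$ over $X$, at the cost of losing domination outside $U_i$. The idea is to rescale the spray parameter by a high power of the regular function $f_i\circ\hgot\in\Oscr(Z)$, whose zero locus is precisely $Z|_{X\setminus U_i}$. After replacing $(E_F,s_F)$, if necessary, by a dominating spray with trivial spray bundle $F\times\C^N$---obtained by composing $s_F$ with an algebraic surjection $F\times\C^N\twoheadrightarrow E_F$, which may be arranged by quasi-affineness of the relevant fibres or by passing to a finite subelliptic family---the rescaled spray
\[
	\hat s_i(z,v) := \tilde s_i\bigl(z,\, f_i(\hgot(z))^k\, v\bigr),
	\qquad z\in Z|_{U_i},\ v\in\C^N,
\]
extends for $k$ sufficiently large to a regular algebraic map $Z\times\C^N\to Z$ that is fibrewise over $X$ and coincides with the projection $(z,v)\mapsto z$ on $Z|_{X\setminus U_i}$. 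In local affine coordinates on $Z$, $\tilde s_i(z,v)-z$ is a polynomial in $v$ whose coefficients are regular on $Z|_{U_i}$ with poles of bounded order along the divisor $Z|_{X\setminus U_i}=\{f_i\circ\hgot=0\}$, and a sufficiently high power of $f_i\circ\hgot$ kills these poles as well as the coordinate singularities arising from the local trivialisation $\phi_i$.

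Then $\{(\C^N,\hat s_i)\}_{i=1}^m$ is a dominating family of algebraic fibre sprays on $Z$: at any $z\in Z|_{U_i}$, the scalar $f_i(\hgot(z))^k$ is nonzero, so $(d\hat s_i)_{(z,0)}|_{\{0\}\times\C^N}$ equals this scalar times $(d\tilde s_i)_{(z,0)}|_{\{0\}\times\C^N}$ and hence surjects onto $\ker d\hgot_z$, and the sets $Z|_{U_i}$ cover $Z$. The main obstacle is the algebraic extension step, which splits into ensuring that the spray bundle extends regularly (handled by the trivial-bundle reduction) and that the rescaled spray map itself is regular across $Z|_{X\setminus U_i}$ (handled by taking $k$ large enough). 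Affineness of $X$ is essential here, as it supplies algebraic cutoff functions to arbitrarily high powers; the holomorphic analogue requires Cartan's theorems instead. Once algebraic subellipticity of $\hgot$ is established, the concluding clause follows because Theorem \ref{th:ahRunge1} applies to every algebraically subelliptic submersion.
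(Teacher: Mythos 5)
Your proposal is correct and follows essentially the same route as the paper: cover the affine base by Zariski (principal) open sets trivialising the bundle, take the fibre spray pulled back over each $Z|_{U_i}$, and extend it to all of $Z$ by rescaling the spray parameter with a high power of $f_i\circ\hgot$, losing domination only over $X\setminus U_i$ --- which is precisely the localisation device of \cite[Proposition 6.4.2]{Forstneric2017E} that the paper invokes without spelling it out. The one step to treat with more care is your reduction to a trivial spray bundle: quasi-affineness of the fibre justifies the surjection $F\times\C^N\twoheadrightarrow E_F$ for the punctured cones arising in this paper, but not for a general algebraically elliptic fibre (which may, e.g., be projective), and ``passing to a finite subelliptic family'' does not by itself trivialise the bundles; in that generality one should instead extend the (possibly nontrivial) spray bundle across $Z|_{X\setminus U_i}$, for instance after replacing it by a quotient of a direct sum of line bundles, which do extend across Zariski-open inclusions of smooth varieties --- exactly the point the paper delegates to the proof of the cited proposition.
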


\begin{proof}
Note that $X$ is covered by finitely many Zariski open domains $U_i$
such that the restriction $\hgot:Z|_{U_i}\to U_i$ is a trivial fibre bundle 
with algebraically elliptic fibre $A$ for every $i$, so it admits a 
dominating algebraic fibre spray obtained by a trivial extension 
of a dominating algebraic spray on $A$.
A straightforward generalisation of \cite[Proposition 6.4.2]{Forstneric2017E} 
gives the localisation of algebraic subellipticity from sprays to fibre sprays,
and hence we obtain finitely many algebraic fibre sprays on $Z$ which satisfy
the hypotheses of the theorem. The argument is the same
as in the proof of \cite[Proposition 6.4.2]{Forstneric2017E}. 
The only difference is that, in the case of fibre sprays, the spray maps 
into the total space of the fibre bundle must respect the fibres. 
\end{proof}

Since the null quadric $\boldA$ \eqref{eq:nullq} is flexible
and hence algebraically elliptic,
we have the following immediate corollary.
The last statement is \cite[Proposition 3.1]{AlarconLarusson2023regular};
it can also be seen as an immediate corollary of the results in \cite{Lopez2014TAMS,AlarconLopez2022APDE}.

\begin{corollary}\label{cor:h-Runge-algebraic}
Let $M$ be an affine Riemann surface and $\Acal \to M$ be 
the algebraic subbundle of $(T^*M)^{\oplus n}$ whose fibre is the null quadric 
$\boldA\subset\C_*^n$, $n\geq 3$. Then the conclusion of 
Theorem \ref{th:ahRunge1} holds for sections $M\to \Acal$. 
Furthermore, every section of $\Acal$ over $M$ is homotopic to an 
algebraic section.
\end{corollary}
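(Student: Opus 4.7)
The plan is to verify the two assertions of the corollary separately. The first reduces, by the preceding proposition, to showing that $\Acal \to M$ is an algebraic fibre bundle with algebraically elliptic fibre $\boldA$. The second requires an independent argument because Theorem~\ref{th:ahRunge1} presupposes an algebraic section in the given homotopy class.

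For the first assertion, I would verify algebraic local triviality. Since $M$ is an affine algebraic curve, its cotangent line bundle $T^*M$ is Zariski-locally trivial, so there exist finitely many Zariski open sets $U_i$ covering $M$ and algebraic $1$-forms $\theta_i$ on $U_i$ without zeros or poles. Over each $U_i$, a section of $(T^*M)^{\oplus n}$ has the form $(f_1,\ldots,f_n)\theta_i$, and the nullity condition $\sum\phi_i^2 = 0$ reduces to $\sum f_i^2 = 0$, identifying $\Acal|_{U_i}$ with $U_i \times \boldA$. The transition functions rescale $\theta_i$ by nonvanishing algebraic functions, which act on $\boldA$ by $\C^*$-scaling, so $\Acal \to M$ is an algebraic fibre bundle with fibre $\boldA$. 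Since $\boldA$ is flexible (as recorded in the paper), it is algebraically elliptic, and the preceding proposition then yields the algebraic subellipticity of $\Acal \to M$. Thus Theorem~\ref{th:ahRunge1} applies.

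For the second assertion, I would proceed as follows. By the Oka principle applied to the fibre bundle $\Acal \to M$, whose fibre $\boldA$ is complex homogeneous and hence Oka, any continuous section is homotopic through sections to a holomorphic one, so it suffices to treat the holomorphic case. Each holomorphic section $\phi$ of $\Acal$ admits a Weierstrass-type description in terms of a meromorphic function $g$ on $M$ and a holomorphic $1$-form $\eta$ on $M$ (with an analogous rational parametrisation for $n > 3$). Using the density, in the compact-open topology on $\Oscr(M)$, of functions on $M$ extending meromorphically to $\overline M$, together with the finite topological type of $M$, one deforms $(g,\eta)$ through admissible data into a pair that extends meromorphically, hence algebraically, to $\overline M$. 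This produces an algebraic section in the homotopy class of $\phi$.

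The main obstacle I expect is making this deformation global while keeping $\phi_t$ inside $\Acal$ throughout the homotopy: one must preserve the nullity condition, the absence of common zeros of the components $\phi_{t,i}$, and the compatibility across overlapping Weierstrass charts. Naïve perturbations of the Weierstrass data can violate any of these constraints, and a single such chart need not cover all of $M$. This is precisely the difficulty resolved in \cite[Proposition~3.1]{AlarconLarusson2023regular}, which I would invoke directly; alternatively, an algebraic representative of each homotopy class can be extracted from the constructive arguments of \cite{Lopez2014TAMS, AlarconLopez2022APDE}.
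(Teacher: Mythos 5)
Your proposal is correct and follows essentially the same route as the paper: the first assertion comes from the Proposition on algebraic fibre bundles with algebraically elliptic fibre (using that $\boldA$ is flexible, hence algebraically elliptic, and that $\Acal\to M$ is Zariski-locally trivial), and the second assertion is exactly \cite[Proposition 3.1]{AlarconLarusson2023regular} (alternatively \cite{Lopez2014TAMS,AlarconLopez2022APDE}), which is precisely what the paper cites. Your Weierstrass-data sketch for the second part is extraneous heuristics, but since you ultimately invoke the same reference, the argument stands.
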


%
%
\begin{remark}\label{rem:Mergelyan}
As noted already, the basic case of Theorem \ref{th:ahRunge1}
with the parameter $t\in[0,1]$ and without $\zeta$-parameters coincides 
with \cite[Theorem 3.3]{Forstneric2006AJM} 
(see also \cite[Theorem 6.15.3]{Forstneric2017E}). 
In the last part of the proof of this result in \cite[p.\ 251]{Forstneric2006AJM} 
the author referred to a small extension of the Oka--Weil theorem 
without being explicit. The point is to approximate the given homotopy
$\{\eta^m_t\}_{t\in[0,1]}$ (using the notation in the cited paper)
of holomorphic sections of a trivial vector bundle
over $X$, depending continuously on $t\in[0,1]$, by a homotopy which is 
holomorphic in $t$ in a neighbourhood of $[0,1]\subset \C$, and 
applying the Oka--Weil theorem to approximate by a polynomial
map in all variables. We spell out this argument in greater generality 
since we shall need it. In the next result, the relevant parameter
is called $p$.
\end{remark}

%
%
\begin{proposition}\label{prop:Mergelyan}
Let $Z$ and $X$ be complex manifolds, 
$\hgot: Z\to X$ be a holomorphic submersion onto $X$, $K$ be a compact 
subset of $X$ with a basis of Stein neighbourhoods, and 
$U\subset X$ be an open set containing $K$. 
Assume that  $f: U\times [0,1] \to Z$ is a continuous map such that 
$\hgot(f(x,p))=x$ for all $(x,p)\in U\times [0,1]$ 
and the section $f_p=f(\cdotp,p):U\to Z|_U$ of $\hgot$ 
is holomorphic for every $p\in [0,1]$. Then 
we can approximate $f$ uniformly on $K\times [0,1]$ by holomorphic maps  
$F:U' \times V' \to Z$, defined on open neighbourhoods
$U'\times V'\subset X\times\C$ of $K\times [0,1]$, such that 
$\hgot(F(x,p))=x$ for all $(x,p)\in U'\times V'$ and 
$F(\cdotp,p)=f(\cdotp,p)$ for $p=0,1$.
Furthermore, a homotopy $f^t:U\times [0,1] \to Z$ of maps
as above, depending continuously on $t\in [0,1]$, can be
approximated by a homotopy of holomorphic maps $F^t:U' \times V' \to Z$ having the stated properties for every $t\in [0,1]$.
\end{proposition}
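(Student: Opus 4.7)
The plan is the one sketched in Remark~\ref{rem:Mergelyan}: approximate the continuous curve $p\mapsto f_p$ of holomorphic sections by its Bernstein polynomials in $p$, which automatically upgrade continuous dependence on $p$ to polynomial (hence entire) dependence on $p\in\C$, while \emph{preserving} the endpoint values $f_{0}$ and $f_{1}$.

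The archetypal case is that of a trivial vector bundle $\hgot:Z=X\times\C^d\to X$. Then a holomorphic section corresponds to a holomorphic map $U\to\C^d$, and the family $\{f_{p}\}_{p\in[0,1]}$ is a continuous path in the Fr\'echet space $\Oscr(U,\C^d)$. The Bernstein polynomial approximants
\[
F_{N}(x,p)=\sum_{k=0}^{N}\binom{N}{k}\,p^{k}(1-p)^{N-k}\,f(x,k/N)
\]
are polynomial (hence entire) in $p\in\C$, holomorphic in $x\in U$, and converge to $f$ uniformly on $K\times[0,1]$ as $N\to\infty$ by the Weierstrass--Bernstein theorem (applied componentwise for $\C^d$-valued data on the compact set $K\times[0,1]$). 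Crucially, $F_{N}(x,0)=f(x,0)$ and $F_{N}(x,1)=f(x,1)$ identically, so the endpoint conditions are automatic. In this case the conclusion holds with $V'=\C$.

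To handle a general holomorphic submersion I would reduce locally to the trivial vector-bundle case via a Stein tubular-neighborhood construction. By hypothesis on $K$, shrink $U$ to a Stein neighborhood of $K$. For any holomorphic section $s:U\to Z$, a Stein open neighborhood of $s(U)\subset Z$ can be identified, via the holomorphic exponential map associated with a Hermitian metric on $\ker d\hgot$, with a neighborhood of the zero section of the holomorphic vector bundle $s^{*}(\ker d\hgot)\to U$; nearby sections of $\hgot$ then correspond to holomorphic maps $U\to\C^d$. Using compactness of $[0,1]$ and continuity of $p\mapsto f_{p}$, choose a partition $0=p_{0}<p_{1}<\cdots<p_{N}=1$ so fine that for every $i$ all sections $f_{p}$ with $p\in[p_{i},p_{i+1}]$ lie within such a tubular chart around $f_{p_{i}}$. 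Applying the archetypal Bernstein construction in each chart yields a family $F_{i}(x,p)$ polynomial in $p$ and holomorphic in $x$, approximating $f$ on $K\times[p_{i},p_{i+1}]$, with $F_{i}(\cdot,p_{i})=f(\cdot,p_{i})$ and $F_{i}(\cdot,p_{i+1})=f(\cdot,p_{i+1})$.

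The main technical obstacle is then to splice these local polynomial-in-$p$ approximants into a single map $F$ holomorphic jointly in $(x,p)$ on a common neighborhood $U'\times V'\subset X\times\C$ of $K\times[0,1]$. My approach is to iterate the Bernstein construction: once the piecewise family $F$ is uniformly close enough to $f$ on $K\times[0,1]$, its values on a fine mesh $p=k/M$ all lie in a single tubular chart around, say, $f_{0}$, and can be added as sections of one vector bundle; the Bernstein formula applied once more to these mesh values then produces the desired globally joint-holomorphic approximant, whose endpoint values remain $f_{0}$ and $f_{1}$ by the evaluation property. The parametric version with an external parameter $t\in[0,1]$ follows from the observation that Bernstein sums depend continuously on their input data, so the whole construction may be run with $t$ as an additional continuous parameter and yields a continuous family $F^{t}$ with the required properties.
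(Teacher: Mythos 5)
Your Bernstein-polynomial construction for the model case of a trivial (vector) bundle $Z=X\times\C^d\to X$ is a clean alternative to the paper's argument, which instead uses a partition of unity $\sum\chi_i$ on $[0,1]$ subordinate to a fine partition, replaces each $\chi_i$ by a Weierstrass polynomial approximant $g_i$ with $g_i(p_i)=1$, and forms $F(x,p)=\sum_i f(x,p_i)g_i(p)$. Your Bernstein formula has the advantage of enforcing $F(\cdot,0)=f(\cdot,0)$ and $F(\cdot,1)=f(\cdot,1)$ automatically, whereas the paper's $g_i$ must additionally be chosen to vanish at $p=0,1$ for $i\neq 0,k$ (which is easy but left implicit). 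So far so good.

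The genuine gap is in the reduction from a general submersion to the vector-bundle case. You reduce locally on subintervals $[p_i,p_{i+1}]$, using a tubular neighbourhood of each individual section $f_{p_i}$, and then propose to splice the resulting piecewise-polynomial approximants by applying Bernstein once more, arguing that after the first approximation the values on a fine mesh ``all lie in a single tubular chart around, say, $f_0$''. This last claim is false in general: if $f_1$ is not $C^0$-close to $f_0$, then no approximant of $f_1$ lies in a small tubular neighbourhood of the graph of $f_0$, no matter how accurate the piecewise approximation is. The very phenomenon that forced you to partition $[0,1]$ in the first place is the obstruction to this final splicing step, and iterating Bernstein does nothing to overcome it. The paper avoids the issue entirely by a different reduction: it forms the graph of the whole family, $\Sigma=\{(f(x,p),p):x\in K,\ p\in[0,1]\}\subset Z\times\C$, and invokes the Stein-neighbourhood theorem of Forstneri\v c and Wold to obtain a single Stein open set $\Theta\supset\Sigma$ in $Z\times\C$ with $\Sigma$ being $\Oscr(\Theta)$-convex; embedding $\Theta$ in $\C^N$ and using a holomorphic retraction of a neighbourhood onto $\Theta$ reduces the whole problem at once to the scalar case, with no splicing needed. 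Your proof would become correct if you replaced the piecewise tubular argument by this global Stein-neighbourhood step (after which your Bernstein formula applies directly to the $\C^N$-valued data). A further minor point: a Hermitian metric on $\ker d\hgot$ does not induce a holomorphic exponential map in general; the standard tool here is the Docquier--Grauert holomorphic tubular neighbourhood theorem for Stein submanifolds, which is what the retraction in the paper's argument supplies.
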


\begin{proof}
Let us first consider the special case when $\hgot:Z=X\times \C\to X$
is the trivial submersion with fibre $\C$. In this case, a section 
of $\hgot$ can be identified with a map to $\C$.
Thus, let $f:U\times [0,1]\to \C$ be a continuous function such that 
$f(\cdotp,p)$ is holomorphic on $U$ for every $p\in [0,1]$. 
Choose a partition $0=p_0<p_1<p_2<\cdots < p_k=1$ of $[0,1]$ 
and set $p_{-1}=-1$ and $p_{k+1}=2$. Let $1=\sum_{i=0}^k \chi_i$
be a partition of unity on $[0,1]$, where $\chi_i:\R \to [0,1]$ is a
smooth function supported in $(p_{i-1},p_{i+1})$ for each $i=0,1,\ldots,k$.
Note that $\chi_i(p_i)=1$ for $i=0,1,\ldots,k$. By a theorem of
Weierstrass we can approximate $\chi_i$ as closely as desired
uniformly on $[-1,2]$ by a holomorphic polynomial $g_i$ on $\C$
with $g_i(p_i)=1$. Assuming that the partition $\{p_i\}$ is fine enough
and the approximation of $\chi_i$ by $g_i$ is close enough, the 
holomorphic function
\[
	F(x,p)=\sum_{i=0}^k f(x,p_i) g_i(p), \quad x\in U,\ p\in \C,
\]
approximates $f$ as closely as desired on $U\times [0,1]$.

We now consider the general case. 
The assumption on the compact set $K\subset X$ implies that it 
has a basis of open Stein neighbourhoods in $X$. Hence, we may 
assume that the domain $U\subset X$ is Stein and $K$ is $\Oscr(U)$-convex.
Consider the graph of $f$ over $K\times [0,1]$:
\[
	\Sigma =\bigl\{(f(x,p),p) : x\in K,\ p\in [0,1]\bigr\} \subset Z \times \C.
\]
By \cite[Corollary 2.2]{ForstnericWold2010PAMS}
(see also \cite[Corollary 3.6.6]{Forstneric2017E}), 
$\Sigma$ admits an open Stein neighbourhood $\Theta$ in $Z\times\C$
such that $\Sigma$ is $\Oscr(\Theta)$-convex.
By the standard method, embedding $\Theta$ in a Euclidean 
space and using a holomorphic retraction from a neighbourhood of 
$\Sigma\hra \C^N$ onto $\Sigma$, we reduce the proof 
to the already established case of approximating functions.
(The details of this argument can be found for example in 
\cite[proof of Theorem 3.8.1, p.\ 90]{Forstneric2017E}.)
The same proof applies to homotopies $f^t$ $(t\in [0,1])$ by 
using a partition of unity in the parameter.
\end{proof}

We now modify Theorem \ref{th:ahRunge1} under the additional
assumption that the algebraic submersion $\hgot:Z\to X$ admits
a fibre dominating algebraic spray $(E,\pi,s)$ defined on a trivial vector bundle
$\pi:E=Z\times \C^r \to Z$.  Under this assumption, we have the following 
approximation result in which the data and the approximants depend continuously on a parameter in a compact Hausdorff space.

%
%
\begin{theorem}
\label{th:ahRunge2}
Let $\hgot: Z\to X$ be an algebraic submersion from an algebraic
variety $Z$ onto an affine algebraic variety $X$. 
Assume that there is a fibre dominating algebraic spray $(E,\pi,s)$ 
on $Z$ defined on a trivial vector bundle $\pi:E=Z\times \C^r \to Z$. 
Fix a distance function $\dist$ on $Z$ inducing its standard Hausdorff topology. 
Let $K\subset X$ be a compact $\Oscr(X)$-convex 
subset, $B\subset \C^N$ be a compact convex set 
containing the origin, $U\supset K$ and $V\supset B$ 
be open neighbourhoods, $Q$ be a closed subspace of a compact Hausdorff space $P$, and $f_{\zeta,p,t} : U \to Z|_U$ $(\zeta\in V,\ p\in P,\ t\in [0,1])$ 
be a continuous family of holomorphic sections of $\hgot:Z|_U \to U$   
depending holomorphically on $\zeta\in V$ such that $f_{0,p,0}:X\to Z$ 
is an algebraic section for every $p\in P$ and 
$f_{0,p,t}=f_{0,p,0}$ holds for every $(p,t) \in Q\times [0,1]$.

Given $\epsilon >0$, there is a continuous map 
$F:X\times \C^N \times P\times \C \to Z$ such that the map
$F(\cdotp,\cdotp,p,t):X\times \C^N  \to Z$ 
is algebraic for every $(p,t)\in P\times [0,1]$
and the following conditions hold:
\begin{enumerate}[\rm (a)] 
\item $\hgot(F(x,\zeta,p,t))=x$ for all $x\in X$, $\zeta\in\C^N$, $p\in P$,
and $t\in \C$. 
\item $\dist(F(x,\zeta,p,t),f_{\zeta,p,t}(x))<\epsilon$ for every 
$x\in K$, $\zeta\in B$, $p\in P$, and $t\in [0,1]$.
\item $F(\cdotp,0,p,t)=f_{0,p,t}$ for every 
$(p,t)\in (P\times \{0\}) \cup (Q\times [0,1])$. 
\end{enumerate}
\end{theorem}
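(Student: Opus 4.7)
The approach is to exploit the trivial spray bundle hypothesis to linearise the approximation problem, reducing it to approximation of $\C^r$-valued maps, for which parametric Oka--Weil over any compact Hausdorff parameter space is elementary.

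Using the fibre-dominating algebraic spray $s:Z\times\C^r\to Z$, I first represent the family as an iterated spray over the algebraic base. Since $s(z,\cdot):\C^r\to Z_{\hgot(z)}$ is a submersion at $0$ by the domination hypothesis and $B\times P\times[0,1]$ is compact, there exist an integer $k$ and continuous families $\xi^i_{\zeta,p,t}:U\to\C^r$ $(i=1,\ldots,k)$, holomorphic in $(x,\zeta)$ and continuous in $(p,t)$, such that
\begin{equation*}
f_{\zeta,p,t}(x)=s^{(k)}\bigl(f_{0,p,0}(x);\,\xi^1_{\zeta,p,t}(x),\ldots,\xi^k_{\zeta,p,t}(x)\bigr),
\end{equation*}
where $s^{(k)}$ denotes the $k$-fold iteration of $s$. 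The $\xi^i$'s can be arranged to vanish at $\zeta=0$ for $(p,t)\in S:=(P\times\{0\})\cup(Q\times[0,1])$, since on that locus $f_{0,p,t}=f_{0,p,0}$ and the iterated spray reduces to its base input. Constructing this linearisation continuously over the compact parameter space is the main technical step: it combines the parametric holomorphic implicit function theorem for $s$ with a partition-of-unity subdivision of $B\times P\times[0,1]$ that splits the transport from $f_{0,p,0}$ to $f_{\zeta,p,t}$ into finitely many short steps, each lying within the direct spray reach of its predecessor.

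I then approximate each $\xi^i$ uniformly on $K\times B$ by an algebraic-in-$(x,\zeta)$ map $\tilde\xi^i_{\zeta,p,t}:X\times\C^N\to\C^r$ depending continuously on $(p,t)$ and still vanishing at $\zeta=0$ for $(p,t)\in S$. This is elementary parametric Oka--Weil for $\C^r$-valued maps: a partition of unity on $P\times[0,1]$ expresses $\xi^i$ as a finite sum of continuous scalar functions times fixed holomorphic maps $U\times V\to\C^r$; each fixed map is approximated by a polynomial map $X\times\C^N\to\C^r$ via the classical Oka--Weil theorem (the set $K\times B$ is $\Oscr(X\times\C^N)$-convex), and linear recombination preserves algebraicity in $(x,\zeta)$. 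The vanishing at $\zeta=0$ on $S$ is restored by subtracting a Tietze--Dugundji extension of the small residual error on $S$, taking values in the space of algebraic $\C^r$-valued maps on $X$.

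Finally, setting
\begin{equation*}
F(x,\zeta,p,t)=s^{(k)}\bigl(f_{0,p,0}(x);\,\tilde\xi^1_{\zeta,p,t}(x),\ldots,\tilde\xi^k_{\zeta,p,t}(x)\bigr),
\end{equation*}
and extending from $t\in[0,1]$ to $t\in\C$ by precomposing with a continuous retraction $\C\to[0,1]$, yields a map $F:X\times\C^N\times P\times\C\to Z$ that is algebraic in $(x,\zeta)$ for each $(p,t)\in P\times[0,1]$ and continuous in $(p,t)$. Fibre preservation (a) follows from the fibre-respecting property of $s$; the approximation (b) follows from continuity of $s^{(k)}$ applied to the $\tilde\xi^i$'s; and the matching (c) follows from the vanishing of the $\tilde\xi^i$'s at $\zeta=0$ on $S$, which collapses the iterated spray to $f_{0,p,0}$ on the prescribed locus.
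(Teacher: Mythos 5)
Your proof is correct and follows essentially the same strategy as the paper: use the trivial iterated spray bundle to linearise the family $f_{\zeta,p,t}$ over the algebraic base $f_{0,p,0}$ (the paper cites \cite[Proposition 6.5.1]{Forstneric2017E} where you sketch the underlying implicit-function-theorem and subdivision argument), approximate the resulting $\C^{kr}$-valued family by algebraic maps via parametric Oka--Weil \cite[Theorem 2.8.4]{Forstneric2017E}, and push back through the algebraic spray $s^{(k)}$. Your device of restoring the vanishing at $\zeta=0$ on $(P\times\{0\})\cup(Q\times[0,1])$ by subtracting a Dugundji extension of the residual error is a mild variation of how the paper arranges the same constraint directly within the parametric Oka--Weil step, and it works; note also that your final formula correctly uses $f_{0,p,0}(x)$ as the spray base, whereas the paper's displayed formula has $f_{0,p,t}(x)$, which appears to be a typo since that section need not be algebraic nor defined on all of $X$ for $t\ne 0$.
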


\begin{proof}
Since the vector bundle $\pi:E\to Z$ is assumed to be trivial, 
any iterated spray bundle $(E^{(k)},\pi^{(k)},s^{(k)})$ $(k\in\N)$ 
obtained from $(E,\pi,s)$ (see \cite[Definition 6.3.5]{Forstneric2017E}) is 
also a trivial bundle over $Z$. Recall that $f_{0,p,0} : X \to Z$ is an algebraic
section for every $p\in P$, and hence its graph 
\[
	V_p = \{f_{0,p,0}(x): x\in X\} \subset Z
\]
is an affine algebraic submanifold of $Z$ isomorphic to $X$.
After shrinking the neighbourhoods $U\supset K$ and $V\supset B$ 
if necessary, \cite[Proposition 6.5.1]{Forstneric2017E} gives 
an integer $k\in\N$ and a homotopy of holomorphic maps
$\eta_{\zeta,p,t}:U \to\C^{kr}$ depending continuously
on $(p,t)\in P\times [0,1]$ and holomorphically on $\zeta\in V$, 
where $kr$ is the rank of the iterated  (trivial) vector bundle 
$E^{(k)}\cong Z\times\C^{kr}\to Z$, such that 
$\eta_{0,p,t}$ is the zero map on $U$ for every 
$(p,t)\in (P\times \{0\})\cup (Q\times [0,1])$ and 
\[
	s^{(k)}(f_{0,p,0}(x), \eta_{\zeta,p,t}(x)) = f_{\zeta,p,t}(x)\ \ 
	\text{for all}\ x\in U,\ \zeta \in V,\ p\in P,\ \text{and}\ t\in [0,1].
\]
Since $X$ is an affine algebraic manifold, we can apply the 
parametric Oka--Weil theorem \cite[Theorem 2.8.4]{Forstneric2017E} 
to approximate the homotopy $\eta_{\cdotp,p,t}$, uniformly
on $K\times B$ and uniformly in the parameters $(p,t)\in P\times [0,1]$, 
by a homotopy of algebraic maps 
$\wt \eta_{\cdotp,p,t}:X\times \C^N\to \C^{kr}$
depending continuously on $(p,t)\in P \times [0,1]$, 
with $\wt \eta_{0,p,t}$ the zero map on $X$ for every 
$(p,t)\in (P\times \{0\})\cup (Q\times [0,1])$. 
Since the spray map $s^{(k)}:E^{(k)}\to Z$ is algebraic, the map
$F:X\times \C^N \times P\times \C \to Z$ defined by 
\[
	F(x,\zeta,p,t) = s^{(k)}(f_{0,p,t}(x), \wt\eta_{\zeta,p,t}(x))
\]
satisfies the conclusion of the theorem provided that 
the approximation of $\eta_{\cdotp,p,t}$ by 
$\wt \eta_{\cdotp,p,t}$ was close enough. 
\end{proof}

\begin{remark}
As in Theorem \ref{th:ahRunge1}, 
we can ensure that the map $F$ in Theorem \ref{th:ahRunge2} 
is also algebraic in $t\in \C$; see Remark \ref{rem:Mergelyan} and 
Proposition \ref{prop:Mergelyan}. 
\end{remark}

The following corollary to Theorem \ref{th:ahRunge2} 
is of particular importance in this paper.
Recall that a complete algebraic vector field with algebraic
flow on an algebraic manifold is called a {\em locally nilpotent
derivation}, abbreviated LND. 

\begin{corollary}\label{cor:flexible}
The assumptions, and hence the conclusion of Theorem \ref{th:ahRunge2}, 
hold if the algebraic submersion $\hgot:Z\to X$ is flexible, in the sense 
that $Z$ admits finitely many LNDs tangent to the fibres of 
$\hgot$ and spanning the tangent space of the fibre 
at every point of $Z$. In particular, Theorem \ref{th:ahRunge2} 
holds for sections of any algebraic fibre bundle $Z\to X$ with 
flexible fibre $Y$ over an affine algebraic manifold $X$. 
\end{corollary}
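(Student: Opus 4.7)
The plan is to build a single fibre dominating algebraic spray on a trivial vector bundle $Z\times\C^r\to Z$ out of the given LNDs, thereby reducing the statement directly to Theorem \ref{th:ahRunge2}. Let $V_1,\ldots,V_r$ be LNDs on $Z$ tangent to the fibres of $\hgot$ whose values span $\ker d\hgot_z$ at every $z\in Z$. Because each $V_j$ is complete with algebraic flow, it integrates to an algebraic $\C$-action $\phi_j:\C\times Z\to Z$, $(t,z)\mapsto \phi_j^t(z)$, and each time-$t$ map $\phi_j^t$ is an algebraic automorphism of $Z$ preserving the fibres of $\hgot$, since $V_j$ is fibre-tangent.

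Setting $E=Z\times\C^r$ with $\pi:E\to Z$ the first projection, I would define the algebraic map
\[
	s:E\to Z,\quad s(z;t_1,\ldots,t_r)= \phi_1^{t_1}\circ\phi_2^{t_2}\circ\cdots\circ\phi_r^{t_r}(z).
\]
Composition of fibre-preserving algebraic morphisms is again algebraic and fibre-preserving, so $\hgot\circ s(z;t)=\hgot(z)$, and trivially $s(z;0)=z$. The partial derivative $\partial_{t_j}s|_{t=0}(z)=V_j(z)$, so the differential $ds_{0_z}:E_z\cong \C^r\to T_zZ$ has image $\mathrm{span}(V_1(z),\ldots,V_r(z))=\ker d\hgot_z$ by the spanning hypothesis. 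Hence $(E,\pi,s)$ is a fibre dominating algebraic spray on a trivial bundle over $Z$, which is exactly the hypothesis of Theorem \ref{th:ahRunge2}.

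For the ``in particular'' clause I would verify that any algebraic fibre bundle $\hgot:Z\to X$ with flexible fibre $Y$ over an affine base satisfies the LND hypothesis. In the trivial case $Z=X\times Y$, the LNDs $\xi_1,\ldots,\xi_r$ on $Y$ lift tautologically to fibre-tangent LNDs $(0,\xi_j)$ on $Z$ that jointly span the vertical tangent space everywhere. For a general fibre bundle, take a finite affine Zariski open cover $\{U_i\}$ of $X$ over which $Z$ trivialises, pull back the LNDs of $Y$ to fibre-tangent LNDs $\xi_{i,j}$ on each $Z|_{U_i}$, and multiply by algebraic functions $f_i\circ\hgot$ pulled back from $X$, chosen with $f_i$ vanishing to high enough order on $X\setminus U_i$ so that the resulting vector fields extend regularly by zero to all of $Z$. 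The key algebraic observation is that if $\xi$ is fibre-tangent and $g=f\circ\hgot$, then $\xi(g)=0$, hence $(g\xi)^n=g^n\xi^n$ still acts locally nilpotently on regular functions of $Z$; therefore the extensions remain LNDs. The $f_i$ can be selected so that the resulting global fibre-tangent LNDs still jointly span $\ker d\hgot$ at every point; this is the standard affine-base localisation argument, essentially parallel to Proposition 6.4.2 of Forstnerič's book. The main obstacle is exactly this cutoff-and-extend step, in which one must simultaneously ensure enough vanishing for regularity on $Z$ and preserve the pointwise span condition; once it is handled, the construction of the first two paragraphs produces the required spray and Theorem \ref{th:ahRunge2} finishes the proof.
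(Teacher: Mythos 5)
Your proposal is correct and follows essentially the same route as the paper's proof: the composed flows $\phi_1^{t_1}\circ\cdots\circ\phi_r^{t_r}$ of the fibre-tangent LNDs give the fibre dominating algebraic spray on the trivial bundle $Z\times\C^r$, and the fibre bundle case is handled by the same cut-off trick, multiplying the LNDs on trivialising Zariski charts by functions pulled back from $X$ vanishing to high order on the complement and taking finitely many such collections. Your explicit remark that $\xi(g)=0$ for $g$ pulled back from the base (so $g\xi$ remains an LND) is a correct elaboration of a point the paper leaves implicit.
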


\begin{proof}
If $V_1,\ldots, V_r$ are LNDs on $Z$ which satisfy the hypothesis
of the corollary and $\phi_t^j$ denotes the (algebraic) flow of $V_j$
for time $t\in \C$, then the algebraic map $s:Z\times \C^r\to Z$
given by 
$
	s(z,t_1,\ldots,t_r)=\phi_{t_1}^1\circ \phi_{t_2}^2\circ
	\cdots\circ \phi_{t_r}^r(z)
$ 
($z\in Z,\ t_1,\ldots,t_r\in \C$) is an algebraic fibre dominating spray.

Assume now that $\hgot:Z\to X$ is an algebraic fibre bundle with a 
flexible fibre $Y$ over an affine manifold $X$. If $U\subset X$ 
is a Zariski open subset such that the restricted bundle $\hgot:Z|_U\to U$ 
is algebraically trivial, then clearly this restricted bundle is flexible.
Let $V_1,\ldots,V_r$ be LNDs on $Z|_U\cong U\times Y$ 
which are tangent to the fibres of $\hgot$ and span the fibre 
at every point of $Z|_U$.
If $g:X\to\C$ is an algebraic function that vanishes to a sufficiently 
high order on the subvariety $X\setminus U$ then the products
$g V_i$ for $i=1,\ldots,r$ extend to LNDs on $Z$ that vanish 
on $\hgot^{-1}(\{g=0\})$. Finitely many such collections of LNDs 
on $Z$, for different vector bundle charts $U$ and functions $g$, 
show that $\hgot:Z\to X$ is flexible. 
\end{proof}

%
%
\section{Approximation and interpolation  
of directed algebraic immersions}\label{sec:AAJI}

\begin{proof}[Proof of Theorem \ref{th:AAJI}]
Recall that $A\subset\C_*^n$, $n\geq 2$, is assumed to be a connected 
smooth punctured cone \eqref{eq:A} which is algebraically elliptic and 
not contained in any hyperplane in $\C^n$.  Let $M$ be an affine Riemann
surface, $\mathcal A$ be the subbundle of $(T^*M)^{\oplus n}$ defined by 
$A$, $K$ be a holomorphically convex compact subset of $M$, 
$U\supset K$ be an open neighbourhood of $K$, and 
$h:U\to\C^n$ be a holomorphic $A$-immersion such that 
the homotopy class of continuous sections of $\mathcal A|_U$ 
that contains $dh$ also contains the restriction of an algebraic section 
$\vartheta$ of $\mathcal A$ on $M$. 
By \cite[Theorem 2.3 (a)]{AlarconForstneric2014IM}
and \cite[Theorem 1.3]{AlarconCastro-Infantes2019APDE},
we may assume that $h$ is nondegenerate. 
Let $Q=\{q_1,\ldots,q_l\}\subset K$ be the 
points at which we wish to interpolate. 

By \cite[Theorem 2.6]{AlarconForstneric2014IM} we can approximate
$h$ uniformly on $K$ by a holomorphic $A$-immersion $\tilde h:M \to\C^n$
whose differential $d\tilde h$ is homotopic to $\vartheta$.
Furthermore, by Alarc\'on and Castro-Infantes 
\cite{AlarconCastro-Infantes2019APDE} we may assume that 
$\tilde h$ agrees with $h$ to the given finite order $k$ in the points of $Q$. 
We replace $h$ by $\tilde h$ and drop the tilde.

Choose a compact set 
$L\subset M=\overline M\setminus\{x_1,\ldots,x_m\}$ 
containing $K$ in its interior such 
that $M\setminus L = \bigcup_{j=1}^m D_j\setminus \{x_j\}$, where 
$D_1,\ldots,D_m$ are discs in $\overline M$ with 
pairwise disjoint closures such that $x_j\in D_j$ for $j=1,\ldots,m$. 
Note that the inclusion $L\hra M$ is a homotopy equivalence; in particular,
$L$ is connected. Choose a collection $\Cscr=\{C_1,C_2,\ldots,C_s\}$ 
of smooth embedded arcs and closed curves in $\mathring L=L\setminus bL$
which contains a basis of the homology group $H_1(L,\Z)=H_1(M,\Z)$,  
and also arcs connecting $q_1$ to each of the points $q_2,\ldots,q_l\in Q$,  
such that any two curves in $\Cscr$ only meet in $q_1$ and 
$|\Cscr|=\bigcup_{i=1}^s C_i \subset \mathring L$ is a connected 
compact Runge set in $M$. For the construction of such a family,
see \cite[Lemma 1.12.10]{AlarconForstnericLopez2021} 
and \cite[proof of Proposition 3.3.2, p.\ 142]{AlarconForstnericLopez2021}.

Choose a nowhere vanishing holomorphic $1$-form $\theta$
on $M$ and write $dh=f\theta$, so $f:M \to A$ is a nondegenerate 
holomorphic map. We will construct a holomorphic deformation family of $f$ 
on a neighbourhood $V\subset M$ of $L$ (that is, a spray of holomorphic 
maps $V\to A$) which is both period dominating with respect to the family 
$\Cscr$ and $k$-jet dominating on the set 
$Q=\{q_1,\ldots,q_l\}$, where $k$ is the desired order of interpolation. 
 
We begin by explaining the second issue. Given points $q\in M$ 
and $z\in A$, we denote by $J_{q,z}^k$ the space of 
$k$-jets of holomorphic maps from open neigbourhoods of $q\in M$ 
to $A$ sending $q$ to $z$. In local holomorphic coordinates, this is the space
of holomorphic polynomial maps of degree $k$ in one complex variable 
with $\dim A$ components and with vanishing constant term, so it is
a complex Euclidean space of some dimension $s=s(k,\dim A)$.
By varying the point $z\in A$ we get the complex manifold
$J_{q,A}^k=\bigsqcup_{z\in A} J_{q,z}^k$ of dimension $s+\dim A$.  
Given a holomorphic map $f:M \to A$, we 
denote by $j^k_q(f)\in J_{q,A}^k$ the $k$-jet of $f$ at $q$
(including its value $f(q)\in A$). Finally, set 
\[
	j_Q^k(f) = \bigl( j_{q_1}^k(f),\ldots,j_{q_l}^k(f)\bigr) 
			\in \prod_{i=1}^l J_{q_i,A}^k=:J_{Q,A}^k.
\]
Note that $J_{Q,A}^k$ is a complex manifold of dimension $N_1=l(s+\dim A)$.

By a theorem of Siu \cite{Siu1976} 
(see also \cite[Theorem 3.1.1]{Forstneric2017E}), 
the graph of a holomorphic map $f:M \to A$ 
over $L$ has an open Stein neighbourhood in $M\times A$, 
and the normal bundle to the graph is trivial by the Oka--Grauert principle
\cite[Theorem 5.3.1 (iii)]{Forstneric2017E}. Hence, the space
of holomorphic maps $L\to A$ near a given map $f:L\to A$ is 
isomorphic to a neighbourhood of the origin in the 
space of holomorphic maps $L\to \C^{\dim A}$.
By using standard tools we can find a neighbourhood $V\supset L$
and an $N_1$-parameter holomorphic spray of maps 
$f_\zeta: V \to A$ for $\zeta$ in a ball $0\in B_1\subset \C^{N_1}$
such that $f_0=f|_V$ and the differential
\[
		\frac{\di}{\di\zeta}\bigg|_{\zeta=0} j^k_{Q}(f_\zeta):
		\C^{N_1} \longrightarrow T_{j^k_{Q}(f)} J_{Q,A}^k \cong \C^{N_1}
\]
is an isomorphism. 
This means in particular that for every collection of values and $k$-jets near the values and $k$-jets of $f$ at the points $q_i\in Q$ $(i=1,\dots,l)$ there is precisely one member $f_\zeta$ of this family which assumes these values and $k$-jets at the points of $Q$. 
Thus, $\{f_\zeta\}_{\zeta\in B_1}$ is a universal local deformation 
family of $f=f_0$ with respect to the values and $k$-jets at the points of $Q$.

Let $\Cscr=\{C_1,C_2,\ldots,C_s\}$ be the family of curves in $L$
constructed above. On each curve $C_i$ we choose an orientation.
For every continuous map $g:|\Cscr|\to A$ we define its periods by
\begin{equation}\label{eq:periodmap}
	\Pcal(g) = \left(\int_{C_i} g \theta\right)_{i=1,\ldots,s} \in 
	(\C^n)^s = \C^{ns}.
\end{equation}
Set $N_2=ns$.
Recall that the given immersion $h:M\to \C^n$ is nondegenerate. 
Hence, after shrinking the ball $B_1\subset \C^{N_1}$ around 
the origin and the open set $V\supset L$ around $L$ if necessary, 
we can apply \cite[Lemma 3.2.1]{AlarconForstnericLopez2021}
(whose basic version is \cite[Lemma 5.1]{AlarconForstneric2014IM}) 
to extend the spray $f_\zeta$ to another holomorphic spray of maps 
$f_{\zeta,\tau}:V\to A$, where $\tau$ belongs to a ball $B_2\subset \C^{N_2}$ 
around the origin, such that 
\begin{eqnarray*}
	f_{\zeta,0} &=& f_\zeta \ \ \text{for all $\zeta\in B_1$}, \\  
	j^k_{Q}(f_{0,\tau}) &=& j^k_{Q}(f_{0,0})\ \ \text{for all $\tau\in B_2$, and}\\
	\frac{\di}{\di\tau}\bigg|_{\tau=0} \Pcal(f_{0,\tau}) &:&
        \C^{N_2} \to \C^{N_2} \ \ \text{is an isomorphism}.
\end{eqnarray*}
The second property is ensured by choosing the deformation 
$f_\tau$ to be fixed to order $k$ at all points of the finite set $Q$, which is 
possible by the proof of \cite[Lemma 3.2.1]{AlarconForstnericLopez2021}. 
Setting $N=N_1+N_2$, it follows that the differential of the map
\[
	B_1\times B_2 \ni (\zeta,\tau) \longmapsto
	\left(j^k_{Q}(f_{\zeta,\tau}),   \Pcal(f_{\zeta,\tau})\right) 
	\in J_{Q,A}^k \times \C^{N_2}  
\]
at the point $(\zeta,\tau)=(0,0)$ is an isomorphism $\C^{N}\to\C^N$.

Choose a compact subset $B$ of $B_1\times B_2$ containing 
the origin in its interior. Since $dh=f_{0,0}\theta$ is homotopic on $V$ to the 
restriction of an algebraic section $\vartheta$ of $\mathcal A$ on $M$,
Theorem \ref{th:ahRunge1} implies that the spray 
$f_{\zeta,\tau}\theta:V\to \Acal$ can be approximated uniformly on 
$L \times B$ by a spray $\tilde f_{\zeta,\tau}\theta :M\to \Acal$
of algebraic sections. 
(Note that the map $\tilde f_{\zeta,\tau}$ is not necessarily algebraic, 
only the product $\tilde f_{\zeta,\tau}\theta$ is algebraic.)
If the approximation is close enough,
the implicit function theorem furnishes a value 
$(\zeta_0,\tau_0)\in B$ close to the origin such that 
\[
	j^k_{Q}(\tilde f_{\zeta_0,\tau_0})= j^k_{Q}(f_{0,0}),\quad
	\Pcal(\tilde f_{\zeta_0,\tau_0}) = \Pcal(f_{0,0}) =0.
\]
It follows that the map $\tilde h:M\to\C^n$ defined by 
\[
	\tilde h(x)=h(q_1)+\int_{q_1}^x \tilde f_{\zeta_0,\tau_0}\theta,
	\quad x\in M,
\]
is an algebraic $A$-immersion which approximates
$h$ uniformly on $K$ and whose $k$-jet agrees with the $k$-jet of $h$ 
at every point of $Q$. However, $\tilde h$ need not have a pole
at every point of $E=\overline M\setminus M=\{x_1,\ldots,x_m\}$, 
so it need not be proper. This can be taken care of as in 
\cite[proof of Theorem 3.1]{AlarconLarusson2023regular};
we recall the main idea.
Recall that $M\setminus L = \bigcup_{j=1}^m D_j\setminus \{x_j\}$. 
For every $j=1,\ldots,m$, choose a point $x'_j \subset D_j \setminus \{x_j\}$
and set $E'=\{x'_1,\ldots,x'_m\}$. Let $\theta_0$ be a nowhere vanishing 
holomorphic $1$-form on a neighbourhood of the compact set 
$D=\bigcup_{j=1}^m \overline D_j\subset \overline M$. 
Choose $v_j\in A$ such that 
\begin{equation}\label{eq:max}
	|v_j| > \sup \bigl\{|f_{\zeta,\eta}(x) \theta(x) /\theta_0(x)| : 
	x\in bD_j,\ (\zeta,\eta)\in B\bigr\}.
\end{equation}
(Note that $f_{\zeta,\eta}\theta/\theta_0$ is a map with values in $A$.) 
Next, pick a small neighbourhood 
$W_j \Subset D_j\setminus \{x_j\}$ of $x'_j$
such that $V\cap W_j=\varnothing$ and set $W=\bigcup_{j=1}^m W_j$.
We extend the spray $f_{\zeta,\eta}$ from $V$ to $V\cup W$ by setting
\begin{equation}\label{eq:ext}
	\text{$f_{\zeta,\eta}=v_j \theta_0/\theta$ on $W_j$ for all 
	$(\zeta,\eta)\in B_1\times B_2$ and $j=1,\ldots,m$.} 
\end{equation}
Since the compact set $L'=L\cup E'$ is Runge in $M$, 
Theorem \ref{th:ahRunge1}
allows us to approximate the spray $f_{\zeta,\eta}\theta:V\cup W\to \Acal$ 
uniformly on $L' \times B$ by a spray $\tilde f_{\zeta,\eta}\theta:M\to \Acal$
of algebraic sections. If the approximation is close
enough, then for every $(\zeta,\eta)\in B$ the section  
$\tilde f_{\zeta,\eta}\theta$ of $\Acal$  
has a pole at every point $x_j\in E$ as follows from 
\eqref{eq:max}, \eqref{eq:ext}, and the maximum principle. 
The proof is concluded as before.
\end{proof}

%
%
\section{Every complete minimal surface of finite total curvature \\ is
isotopic to the real part of a proper algebraic null curve}
\label{sec:flux}

%
%
\begin{proof} [Proof of Theorem \ref{th:flux}]
It suffices to prove the second assertion in the statement.
So let $u_0:M\to\R^n$ $(n\ge 3)$ be a complete nonflat conformal 
minimal immersion of finite total curvature, and let 
$\Fcal_t\in H^1(M,\R^n)$, $t\in [0,1]$, be a homotopy 
such that $\Fcal_0=\Flux_{u_0}$.
We assume without loss of generality that $\Fcal_1=0$.

By  \cite[Theorem 1.1]{AlarconLarusson2022complete}, 
there is a smooth isotopy 
$u_t:M\to\R^n$ $(t\in[0,1])$ of nonflat conformal minimal immersions such 
that $\Flux_{u_t}=\Fcal_t$ for all $t\in[0,1]$; in particular, $\Flux_{u_1}=0$. 
(This is an improved version of \cite[Theorem 1.1]{AlarconForstneric2018Crelle}
with control of the flux along the isotopy.)
Hence, $f_t=\di u_t$ for $t\in [0,1]$ is a smooth homotopy of nonflat 
holomorphic sections of the bundle $\Acal\to M$. 

Pick a point $q\in M$ and a 
family $\Cscr=\{C_1,C_2,\ldots,C_s\}$ of closed oriented curves in $M$
based at $q$ which form a basis of $H_1(M,\Z)$ and such that
$|\Cscr|=\bigcup_{i=1}^s C_i$ is a connected compact Runge set in $M$. 
Let $\Pcal$ denote the associated period map \eqref{eq:periodmap}. 
As in the proof of Theorem \ref{th:AAJI}, we choose a compact set 
$L\subset M$ such that $|\Cscr|\subset L$ and 
$M\setminus L = \bigcup_{j=1}^m D_j\setminus \{x_j\}$, where 
$\overline D_1,\ldots,\overline D_m$ are pairwise disjoint discs in 
$\overline M$ and $E=\{x_1,\ldots,x_m\}=\overline M\setminus M$
is the set of ends of $M$. By \cite[Lemma 3.2.1]{AlarconForstnericLopez2021},
there is a period dominating spray of holomorphic sections 
$f_{\zeta,t}:V\to \Acal|_V$ over a neighbourhood $V\subset M$ of $L$,
depending holomorphically on $\zeta\in B\subset \C^N$, 
where $B$ is a ball around the origin in some Euclidean space $\C^N$,
and smoothly on $t\in [0,1]$, such that $f_{0,t}=f_t|_V$ holds for all $t$. 
Pick a closed ball $B'\subset \C^N$ with $0\in B'\subset B$.
By Theorem \ref{th:ahRunge1}, we can approximate the homotopy
$f_{\zeta,t}$ uniformly on $L\times B' \times [0,1]$ by a homotopy
of algebraic sections $\tilde f_{\zeta,t}:M\to \Acal$ such that 
$\tilde f_{0,0}=f_{0,0}$. Furthermore, by a modification of the device used in 
the proof of Theorem \ref{th:AAJI}, we can ensure that 
$\tilde f_{\zeta,t}$ has a pole at each end of $M$ for every $(\zeta,t)$.
To explain this, pick a nowhere vanishing holomorphic $1$-form
$\theta$ on a neighbourhood of $D=\bigcup_{j=1}^m \overline D_j$
in $\overline M$ and set $D^*=D\setminus\{x_1,\ldots,x_m\}$.
For each $t\in[0,1]$, write $f_t=g_t\theta$, so $g_t:D^*\to\boldA$
is a homotopy of holomorphic maps. For each $j=1,\ldots,m$, 
choose a point $x'_j\in D_j\setminus \{x_j\}$ sufficiently close to $x_j$ 
such that 
\[
	|g_0(x'_j)| > \sup \{|g_0(x)| : x\in bD_j\}.
\]
Such $x'_j$ exists since $f_0$ (and hence $g_0$)
has a pole at $x_j$ for $j=1,\ldots,m$.
By shrinking the neighbourhood $V\supset L$ we can also assume that
$x'_j\notin \overline V$ for $j=1,\ldots,m$. Next, choose a smooth function 
$\xi_j:[0,1]\to [1,\infty)$ such that $\xi_j(0)=1$ and 
\[
	\xi_j(t) |g_0(x'_j)| >\sup \{|g_t(x)| : x\in bD_j\}\ \ \text{for all $t\in[0,1]$.}
\]
By shrinking the ball $0\in B\subset\C^N$ if necessary, it follows that
\begin{equation}\label{eq:xit}
	\xi_j(t) |g_0(x'_j)|  > \sup \{|(f_{\zeta,t}/\theta)(x)| : x\in bD_j\} 
	\ \ 
	\text{for all $\zeta\in B$ and $t\in[0,1]$.} 
\end{equation}
For every $j=1,\ldots,m$, we extend the spray $f_{\zeta,t}$ 
(which is defined on $V$)
to a small open neighbourhood $W_j \Subset D_j\setminus \{x_j\}$ 
of the point $x'_j$, with $\overline V\cap \overline W_j=\varnothing$, by 
\begin{equation}\label{eq:onWj}
	f_{\zeta,t}(x)=\xi_j(t) f_0(x) \ \ 
	\text{for all}\ x\in W_j,\ \zeta\in B,\ \text{and}\ t\in[0,1].
\end{equation}
Note that the extended spray is independent of $\zeta$ on $W_j$,
and $f_{\zeta,0}=f_0$ on $W_j$ for all $\zeta\in B$. 
Set $W=\bigcup_{j=1}^m W_j$, $L'=L\cup\{x'_1,\ldots,x'_m\}$,
and let $0\in B'\subset B$ be a smaller closed ball.
We now approximate the extended spray uniformly on $L'\times B'\times [0,1]$
by a spray of algebraic sections $\tilde f_{\zeta,t}$ of the bundle
$\Acal\to M$ such that $\tilde f_{0,0}=f_0$. If the approximation 
is close, it follows from \eqref{eq:xit}, \eqref{eq:onWj} and
the maximum principle that $\tilde f_{\zeta,t}$ has a pole at 
each end of $M$ for every $\zeta\in B$ and $t\in[0,1]$.

Assuming, as we may, that the approximation of $f_{\zeta,t}$ by 
$\tilde f_{\zeta,t}$ is close enough, the implicit function theorem gives 
a map $[0,1]\ni t\mapsto \zeta(t)\in B'$ with $\zeta(0)=0$
such that $\Pcal(\tilde f_{\zeta(t),t}) =\Pcal(f_{0,t})=\Pcal(f_t)$ holds
for all $t\in[0,1]$. (Note that the values of $\tilde f_{\zeta,t}$
on $W$ do not influence the period map since $|\Cscr|\subset L$.)
It follows that 
\begin{equation}\label{eq:tildeutc01}
	\tilde u_t(x)=u_t(q) + \Re \int_q^x 2\tilde f_{\zeta(t),t},
	\quad x\in M,\ t\in [0,1],
\end{equation}
is a homotopy of complete conformal minimal immersions $M\to \R^n$ 
of finite total curvature such that $\tilde u_0=u_0$,
$\Flux_{\tilde u_t}=\Flux_{u_t}=\Fcal_t$ for all $t\in[0,1]$, and hence 
$\Flux_{\tilde u_1}=0$. Thus, $\tilde u_1$ is the real part of a 
proper algebraic null curve $M\to\C^n$.
\end{proof}

In order to justify the discussion below Theorem \ref{th:flux} concerning the case when the immersion $u_0$ given in the theorem is not complete, we fix $0<c<1$ and modify the above proof by choosing the functions 
$\xi_j:[0,1]\to [1,\infty)$ such that \eqref{eq:xit} holds for all $t\in[c,1]$. 
Following the proof without any further modifications, we conclude that the conformal minimal immersion $\tilde u_t$ in \eqref{eq:tildeutc01} is complete for all $t\in[c,1]$.

%
%
%
%
\section{The h-principle for complete nonflat minimal surfaces \\ of finite total curvature}\label{sec:whe1}

\noindent
Assume that $M$ is an affine Riemann surface, 
$\Ascr_*^1(M,\boldA)$ is the space of nonflat algebraic
1-forms on $M$ with values in the null quadric $\boldA$ \eqref{eq:nullq}, 
and $\Ascr_\infty^1(M,\boldA)$ is the subset of $\Ascr_*^1(M,\boldA)$
consisting of $1$-forms with an effective pole at every end of $M$.
We denote by $\Acal\to M$ the algebraic fibre bundle
whose sections are 1-forms on $M$ with values in $\boldA$.

The main aim of this section is to prove Theorem \ref{th:whe1}. 
To this end, we first prove the following h-principle for the $(1,0)$-differential
$\di$, applied on either of the spaces 
$\Re \NC_*(M,\C^n) \subset \CMI_*(M,\R^n)$
and taking values in $\Ascr_*^1(M,\boldA)$ or $\Ascr_\infty^1(M,\boldA)$.

%
%
\begin{theorem}\label{th:hp1}
Assume that $M$ is an affine Riemann surface. 
Let $Q$ be a closed subspace of a compact Hausdorff space $P$,
$u:M\times Q\to \R^n$, $n\ge 3$, be a continuous map
such that $u_p=u(\cdotp,p) \in \CMI_*(M,\R^n)$ for all $p\in Q$, and 
$\phi:M\times P\to \Acal$ be a continuous map such that 
\begin{enumerate}[\rm (a)]
\item $\phi_p=\phi(\cdotp,p)\in \Ascr_*^1(M,\boldA)$ for every $p\in P$, and 
\item $\di u_p=\phi_p$ for every $p\in Q$.
\end{enumerate}
Then there is a homotopy $\phi^t:M\times P\to \Acal$, $t\in [0,1]$, 
such that $\phi^0=\phi$ and the following conditions hold.  
\begin{enumerate}[\rm (i)]
\item $\phi^t_p=\phi^t(\cdotp,p)\in \Ascr_*^1(M,\boldA)$ for every 
$p\in P$ and $t\in [0,1]$. 
\item $\phi^t_p=\phi_p$ for every $p\in Q$ and $t\in [0,1]$. 
\item  $\phi^1_p\in \Ascr_\infty^1(M,\boldA)$ for every $p\in P$.
\item $\Re \int_C \phi^1_p =0$ for every $p\in P$ and $[C]\in H_1(M,\Z)$.
\end{enumerate}
If {\rm (a)} is replaced by the stronger assumption 
\begin{enumerate}[\rm (a'')]
\item[\rm (a')] 
	$\phi_p=\phi(\cdotp,p)\in \Ascr_\infty^1(M,\boldA)$ for every $p\in P$,
\end{enumerate}
then conditions {\rm (i)} and {\rm (iii)} can be replaced by  
\begin{enumerate}[\rm (i)]
\item[\rm (i')]  $\phi^t_p\in \Ascr_\infty^1(M,\boldA)$ 
for every $p\in P$ and $t\in [0,1]$.
\end{enumerate}
If $u_p\in\Re \NC_*(M,\C^n)$ for all $p\in Q$, then {\rm (iv)} can be replaced by 
\begin{enumerate}[\rm (i)]
\item[\rm (iv')] $\int_C \phi^1_p =0$ for every $p\in P$ and $[C]\in H_1(M,\Z)$.
\end{enumerate}
\end{theorem}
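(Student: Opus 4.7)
The plan is to extend the nonparametric argument of Theorem \ref{th:flux} to the parametric, relative setting in two stages. In Stage~1 I homotope $\phi$ rel $Q$ to a family $\phi^{1/2}$ with poles at every end of $M$, i.e., in $\Ascr_\infty^1(M,\boldA)$. In Stage~2 I homotope $\phi^{1/2}$ rel $Q$ to kill its real periods. Throughout, I exploit the flexibility of the null quadric $\boldA$, which via Corollary \ref{cor:flexible} makes the parametric algebraic approximation Theorem \ref{th:ahRunge2} available over the compact Hausdorff parameter space $P$. Nonflatness is an open condition, so it is preserved by sufficiently accurate approximations and need not be tracked explicitly.

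\textbf{Stage 1 (poles at the ends).} For $p \in Q$, $\phi_p = \di u_p$ has a pole at every end $x_j \in E = \overline M \setminus M$ because $u_p$ is complete of finite total curvature. I adapt the pole-forcing device from the proof of Theorem \ref{th:flux}: fix pairwise disjoint discs $D_j \subset \overline M$ around the $x_j$, a nowhere vanishing holomorphic $1$-form $\theta_0$ on their union, and points $x'_j \in D_j \setminus \{x_j\}$ so close to $x_j$ that $|\phi_p(x'_j)/\theta_0(x'_j)| > \sup_{bD_j}|\phi_p/\theta_0|$ for every $p \in Q$ (uniformly, by compactness of $Q$). I then continuously extend this inequality over $P$ by prescribing target values at the $x'_j$ of sufficient magnitude, compatible with $\phi_p$ on $Q$. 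Invoking the relative parametric approximation of Corollary \ref{cor:flexible} on the compact $\Oscr(M)$-convex set $L \cup \{x'_1, \ldots, x'_m\}$ (where $L \supset K$ is a standard exhaustion set) yields an algebraic homotopy $\phi^s$, $s \in [0,1/2]$, with $\phi^0 = \phi$, $\phi^s_p = \phi_p$ for $p \in Q$, and (by the maximum principle applied to each disc $D_j$) $\phi^{1/2}_p \in \Ascr_\infty^1(M,\boldA)$ for every $p \in P$.

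\textbf{Stage 2 (real periods).} Choose a basis $\Cscr = \{C_1,\ldots,C_s\}$ of $H_1(M,\Z)$ inside a connected compact Runge set $|\Cscr|$ in the interior of $L$, and let $\Pcal$ be the period map (\ref{eq:periodmap}). For each $p \in P$, nonflatness of $\phi^{1/2}_p$ together with \cite[Lemma 3.2.1]{AlarconForstnericLopez2021} furnishes a period-dominating holomorphic spray $\phi^{1/2}_{p,\tau}\colon V \to \Acal|_V$ for $\tau$ in a ball $B \subset \C^{ns}$, with $\phi^{1/2}_{p,0} = \phi^{1/2}_p$ and trivial $\tau$-dependence over $Q$. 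A continuous such choice over $P$ is obtained by patching local sprays via a partition of unity on $P$ (using compactness of $P$ and the affine structure of the space of sections near $\phi^{1/2}_p$). Extending the spray by constants near each $x'_j$ as in (\ref{eq:onWj}) (so the pole at $x_j$ is retained) and applying Theorem \ref{th:ahRunge2} via Corollary \ref{cor:flexible} produces parametric algebraic approximants $\tilde\phi_{p,\tau}$ agreeing with $\phi_p$ for $p \in Q$. Since $u_p$ is $\R^n$-valued for $p \in Q$, $\Re\Pcal(\phi_p) = 0$ on $Q$; the period-domination and the implicit function theorem then give a continuous selection $\tau\colon P \times [0,1] \to B$ with $\tau(\cdot,0) = 0$, $\tau|_{Q \times [0,1]} = 0$, and
\[
    \Re\Pcal\bigl(\tilde\phi_{p,\tau(p,t)}\bigr) = (1-t)\,\Re\Pcal(\phi^{1/2}_p).
\]
Setting $\phi^{1/2 + t/2}_p = \tilde\phi_{p,\tau(p,t)}$ and concatenating with Stage~1 yields a homotopy satisfying (i)--(iv).

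\textbf{Variants and main obstacle.} Under hypothesis (a'), Stage~1 is skipped, and the pole-preservation built into Stage~2 (extension by constants near $x'_j$, combined with the maximum principle) delivers (i'). When $u_p \in \Re\NC_*(M,\C^n)$ for $p \in Q$, we have $\Pcal(\phi_p) = 0$ on $Q$; running Stage~2 with a spray period-dominating for the full $\C$-valued period in $\C^{ns}$ instead of its real part only produces (iv'). The main difficulty is the parametric, relative construction of a globally defined period-dominating spray continuously over $P$ and fixed over $Q$, together with the simultaneous pole-preservation and relative algebraic approximation; this is precisely what flexibility of $\boldA$ enables through Theorem \ref{th:ahRunge2}.
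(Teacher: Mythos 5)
Your decomposition into a pole-forcing Stage~1 followed by a period-killing Stage~2 reverses the paper's order, which in itself is harmless, but Stage~2 contains a genuine gap.  You propose to choose a period-dominating spray over a ball $B$ around the origin, approximate algebraically, and then use the implicit function theorem to produce $\tau(p,t)$ with
\[
\Re\Pcal\bigl(\tilde\phi_{p,\tau(p,t)}\bigr) = (1-t)\,\Re\Pcal(\phi^{1/2}_p).
\]
The implicit function theorem, combined with period domination, is a \emph{local} device: it solves the period equation only for target values in a small neighbourhood of $\Pcal(\tilde\phi_{p,0})$, which (after a close approximation) is itself close to $\Pcal(\phi^{1/2}_p)$.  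At $t=1$ your target is $0$, whereas $\Re\Pcal(\phi^{1/2}_p)$ can be arbitrarily large for $p\notin Q$; the spray is defined only over the ball $B$ and there is no control guaranteeing the period image covers the whole segment down to $0$.  So the claimed selection $\tau(p,t)$ need not exist.  A second, related slip: you prescribe the spray to have ``trivial $\tau$-dependence over $Q$'', which makes it non--period-dominating there; period domination must hold for \emph{all} $p$, and the vanishing $\tau(p)=0$ for $p\in Q$ should emerge from the fact that the approximation has no error over $Q$ rather than be built into the spray.

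What is missing is the global homotopy that reduces the real periods to zero \emph{before} the local correction.  The paper first applies the parametric h-principle of \cite[Theorem 5.3]{ForstnericLarusson2019CAG} to produce a homotopy $f^t_p$ of nondegenerate \emph{holomorphic} maps $M\to\boldA$ with $f^0_p=f_p$, fixed over $Q$, and $\Re\Pcal(f^1_p)=0$ for every $p\in P$.  Only after this global reduction does it introduce the period-dominating spray, extend it to force poles near the ends, approximate algebraically via Corollary~\ref{cor:flexible}, and finally invoke the implicit function theorem to correct the \emph{small} period errors created by the approximation.  Without the invocation of \cite[Theorem 5.3]{ForstnericLarusson2019CAG} (or some equivalent global parametric h-principle), your Stage~2 cannot cross from large real periods to zero.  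Your Stage~1 (pole forcing), the use of Corollary~\ref{cor:flexible}, and the treatment of the variants (a') and (iv') are on the right track and close to what the paper does, but the argument as a whole does not go through until the global holomorphic period-killing step is inserted.
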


It follows from (iii) and (iv) that the maps $u^1_p:M\to\R^n$,
$p\in P$, defined by
\[
	u^1_p(x) = c_p + \Re \int_{x_0}^x 2\phi^1_p 
	\quad \text{for}\ x\in M,
\]
for a fixed $x_0\in M$ and with suitably chosen constants 
$c_p\in\R^n$, $p\in P$, form a continuous family 
$P\to \CMI_*(M,\R^n)$, $p\mapsto u^1_p$,
of complete nonflat conformal minimal immersions of finite total 
curvature such that $u^1_p=u_p$ for all $p\in Q$. 
(The map $c:P\to\R^n$ can be taken to be any continuous extension 
of the map $Q\to\R^n$, $p\mapsto u_p(x_0)$.)  
In case (iv'), the maps $h^1_p:M\to\R^n$ defined by
\[
	h^1_p(x) = c_p + \int_{x_0}^x \phi^1_p \quad\text{for}\ x\in M,
\]
with suitable constants $c_p\in\C^n$, $p\in P$,
are proper nonflat algebraic null curves.

\begin{proof}
We shall use the notation established in the previous two sections.
Pick a point $q\in M$ and a family $\Cscr=\{C_1,C_2,\ldots,C_s\}$ 
of closed oriented curves in $M$ based at $q$ which form a basis of 
$H_1(M,\Z)$ and such that $|\Cscr|=\bigcup_{i=1}^s C_i$ is a connected 
compact Runge set in $M$. 
Let $\Pcal$ denote the associated period map \eqref{eq:periodmap}. 
Denote by $E=\{x_1,\ldots,x_m\}=\overline M\setminus M$
the set of ends of $M$. Let $L\subset M$ be a compact set such that 
$|\Cscr|\subset L$ and $M\setminus L = \bigcup_{j=1}^m D_j\setminus \{x_j\}$, 
where $D_1,\ldots,D_m$ are discs in $\overline M$ with pairwise disjoint 
closures and $x_j\in D_j$ for $j=1,\ldots,m$. 

Choose a nowhere vanishing holomorphic 1-form $\theta$ on $M$ 
and write $\phi_p=f_p\theta$, where $f_p: M\to\boldA$ is a 
holomorphic map depending continuously on $p\in P$.
By \cite[Theorem 5.3]{ForstnericLarusson2019CAG},
there is a homotopy $f^t : M\times P\to \boldA$ $(t\in [0,1])$ such that 
$f^t_p := f^t(\cdotp,p) : M\to \boldA$ is a nonflat holomorphic map
for every $(p,t)\in P\times [0,1]$ satisfying the following two conditions:
\begin{itemize}
\item[\rm (1)]  $f^t_p =f_p$ \ for every 
$(p,t)\in (P\times \{0\}) \cup (Q\times [0,1])$, and  
\item[\rm (2)]  $\Re \Pcal(f^1_p)=0$ for every $p\in P$.
\end{itemize}
If $u_p\in\Re \NC_*(M,\C^n)$ for all $p\in P$ then 
condition (2) can be replaced by 
\begin{itemize}
\item[\rm (2')]  $\Pcal(f^1_p)=0$ for every $p\in P$.
\end{itemize}
Although \cite[Theorem 5.3]{ForstnericLarusson2019CAG} 
is stated for the case when $Q\subset P$ are Euclidean compacts,
the proof holds for any pair of compacts, just as 
\cite[Theorem 4.1]{ForstnericLarusson2019CAG}.

Pick an open Runge neighbourhood $V\Subset M$ of $L$.
By \cite[Lemma 3.2.1]{AlarconForstnericLopez2021},
there is a $\Pcal$-period dominating spray of holomorphic maps 
$f^t_{\zeta,p}:V\to \boldA$, depending continuously on $(p,t)\in P\times [0,1]$ 
and holomorphically on $\zeta\in B\subset \C^N$, 
where $B$ is a ball around the origin in some $\C^N$,
such that $f^t_{0,p}=f^t_p|_V$ for all $(p,t)\in P\times [0,1]$. 

We now adapt to this situation the device,  
used in the proof of Theorem \ref{th:AAJI}, to ensure effective poles
of the approximating algebraic 1-forms at all ends of $M$.
Pick a nowhere vanishing holomorphic $1$-form
$\theta_0$ on a neighbourhood of the compact set 
$D=\bigcup_{j=1}^m \overline D_j$ in $\overline M$.
Also, choose a closed ball $B'\subset B$ containing the origin 
in its interior. For every $j=1,\ldots,m$, we choose a point 
$x'_j\in D_j\setminus \{x_j\}$ sufficiently close to $x_j$ such that 
$x'_j\notin \overline V$ and for every $p\in Q$ we have 
\begin{equation}\label{eq:xjprime}
	|f_{p}(x'_j)| \,\cdotp |(\theta/\theta_0)(x'_j)| > 
	\sup \{|f^1_{\zeta,p}(x)| \,\cdotp |(\theta /\theta_0)(x)| : 
	x\in bD_j,\ \zeta\in B'\}.
\end{equation}
(Note that $\theta/\theta_0$ is a holomorphic function on 
$D\setminus E\subset M$.) 
Since the algebraic 1-form $\phi_p=f_p \theta$ on $M$  
has a pole at $x_j$ for every $p\in Q$ and $\theta_0$ 
has no zeros on $D$, \eqref{eq:xjprime} holds 
for every $x'_j$ sufficiently close to $x_j$.
We can find for every $j=1,\ldots,m$ a continuous function 
$r_j: P\to [0,+\infty)$ which equals $0$ on $Q$ such that 
\begin{equation}\label{eq:rj}
	(1+r_j(p)) |f_{p}(x'_j)| \,\cdotp |(\theta/\theta_0)(x'_j)| > 
	\sup \{|f^1_{\zeta,p}(x)| \cdotp |(\theta /\theta_0)(x)| : 
	x\in bD_j,\ \zeta\in B'\}
\end{equation}
holds for every $p\in P$. Indeed, by \eqref{eq:xjprime} this holds for $p\in Q$, 
and it then holds for all $p\in P$ if $r_j$ is chosen sufficiently 
large on $P\setminus Q$. We extend the homotopy $f^t_{\zeta,p}$  
to a small disc neighbourhood $W_j\Subset D_j\setminus \{x_j\}$ of $x'_j$,  
with $W_j \cap V=\varnothing$, by setting
\begin{equation}\label{eq:extension}
	f^t_{\zeta,p}(x) = (1+t r_j(p)) f_p(x) \ \ 
	\text{for all $x\in W_j$, $\zeta\in B$, $p\in P$, and $t\in [0,1]$}. 
\end{equation}
Set $W=\bigcup_{j=1}^m W_j$. Thus, the homotopy 
$f^t_{\zeta,p}(x)$ is now defined on the Runge domain 
$V\cup W\subset M$ and is independent of $\zeta$ for $x\in W$.
Since the homology basis $\Cscr$ of $M$ is contained in $V$,
the extended homotopy is still period dominating.

Recall that $\Acal\to M$ is an algebraic fibre bundle with 
flexible fibre $\boldA$. By Corollary \ref{cor:flexible}, we can approximate 
the homotopy of holomorphic sections 
$\phi^t_{\zeta,p}=f^t_{\zeta,p}\theta:M\to\Acal$, uniformly 
on the compact set $L'=L\cup\{x'_1,\ldots,x'_m\}\subset V\cup W$ 
and uniformly in the parameters $(\zeta,p,t)\in B'\times P\times [0,1]$,  
by a homotopy of algebraic sections  
\begin{equation}\label{eq:tildephi}
	\tilde \phi^t_{\zeta,p} = \tilde f^t_{\zeta,p}\theta : M\to \Acal
\end{equation}
such that $\tilde \phi^t_{0,p}=\phi^t_{0,p}$ 
holds for all $(p,t)\in (P\times \{0\}) \cup (Q\times [0,1])$.
If the approximation is close enough, 
it follows from \eqref{eq:rj} and \eqref{eq:extension} that
\begin{equation}\label{eq:pole}
	|(\tilde \phi^1_{\zeta,p}/\theta_0) (x'_j)|  > 
	\sup \big\{ |(\tilde \phi^1_{\zeta,p}/\theta_0)(x)| : 
	x\in bD_j,\ \zeta\in B' \big\}
\end{equation}
holds for every $p\in P$ and $j=1,\ldots,m$.
By the maximum principle it follows that the algebraic 1-form 
$\tilde \phi^1_{\zeta,p}$ has a pole at each end of $M$
for every $\zeta\in B'$ and $p\in P$, that is, 
$\tilde \phi^1_{\zeta,p} \in \Ascr_\infty^1(M,\boldA)$.

If we assume that $\phi_p=\phi(\cdotp,p)\in \Ascr_\infty^1(M,\boldA)$  
for every $p\in P$ (see condition (a') in the theorem), 
then the points $x'_j\in D_j\setminus \{x_j\}$ for $j=1,\ldots,m$  
can be chosen such that the inequality \eqref{eq:xjprime} can be replaced 
for any $p\in P$ by 
\[ 
	|f_{p}(x'_j)| \,\cdotp |(\theta/\theta_0)(x'_j)| > 
	\sup \{|f^t_{\zeta,p}(x)| \,\cdotp |(\theta /\theta_0)(x)| : 
	x\in bD_j,\ \zeta\in B',\ t\in [0,1]\}.
\]
Hence, \eqref{eq:rj} holds with the functions $r_j=1$ and
with $f^1_{\zeta,p}(x)$ on the right hand side replaced by $f^t_{\zeta,p}(x)$
for any $t\in [0,1]$. Defining the extension of $f^t_{\zeta,p}(x)$ to
$W$ as in \eqref{eq:extension} with $r_j=1$ for $j=1,\ldots,m$,
the same argument as above gives a homotopy 
$\tilde \phi^t_{\zeta,p}$ as in \eqref{eq:tildephi} such that 
$\tilde \phi^t_{\zeta,p}\in \Ascr_\infty^1(M,\boldA)$
for all values of the parameters. 

Assuming that the approximation of $f^t_{\zeta,p}$ 
by $\tilde f^t_{\zeta,p}$ is close enough, the period domination 
property of the spray $f^1_{\zeta,p}$ and the implicit function theorem 
give a continuous map $\zeta:P\to B'$ with $\zeta(p)=0$
for all $p\in Q$ such that 
\[
	\Re \Pcal(\tilde f^1_{\zeta(p),p}) = 0\ \  
	\text{for all $p\in P$}. 
\]
If (2') holds, we can obtain $\Pcal(\tilde f^1_{\zeta(p),p}) = 0$ 
for all $p\in P$. Then the homotopy 
\[
	\phi^t_p := \tilde f^t_{t\zeta(p),p} \theta \in \Ascr_*^1(M,\boldA)
	\quad \text{for}\ p\in P\ \text{and}\ t\in [0,1]
\]
satisfies the conclusion of the theorem. 
\end{proof}

%
%
The proof of Theorem \ref{th:hp1} also establishes the following result. 
We state it for more general cones $A\subset \C^n_*$, $n\ge 2$. 
We denote by $\Ascr^1_*(M,A)$ the space of nondegenerate 
algebraic $1$-forms with values in $A$, and by $\Ascr^1_\infty(M,A)$ 
the space of those $1$-forms in $\Ascr^1_*(M,A)$ that have a pole at 
every end of $M$.

\begin{proposition}\label{prop:poles}
Let $M$ be an affine Riemann surface 
and $A\subset \C^n_*$ be a smooth connected flexible 
cone \eqref{eq:A} not contained in any hyperplane. 
Then $\Ascr^1_\infty(M,A)$ is an open dense 
subset of $\Ascr^1_*(M,A)$ and the inclusion 
$
	\Ascr^1_\infty(M,A) \hookrightarrow \Ascr^1_*(M,A)
$
is a weak homotopy equivalence.
\end{proposition}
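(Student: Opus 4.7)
The plan is to handle openness separately and then derive both density and the weak homotopy equivalence from a variant of the pole-creation device used in the proof of Theorem \ref{th:hp1}, recast for a general flexible cone $A$.

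I would first handle \emph{openness}. At each end $x_j\in E=\overline M\setminus M$, pick a local coordinate $z$ centred at $x_j$ and a small loop $\gamma_j=\{|z|=r\}\subset M$. Writing $\phi=f(z)\,dz$ near $x_j$, the Laurent coefficient $a_{-N}$ of $\phi$ at $x_j$ equals
\[
   a_{-N} \;=\; \frac{1}{2\pi\imath}\oint_{\gamma_j} z^{N-1}\,\phi,
\]
which is a continuous $\C^n$-valued functional of $\phi$ in the compact-open topology. For $\phi\in\Ascr^1_\infty(M,A)$, the leading such coefficient $a_{-N_j}$ with $N_j\ge 1$ is nonzero at each end $x_j$, and this nonvanishing persists in a neighbourhood, forcing a pole of order at least $N_j$ there. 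Intersecting over the finite set $E$ gives openness.

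For the remaining two claims, my plan is to run the pole-creation portion of the proof of Theorem \ref{th:hp1} verbatim for the flexible cone $A$, discarding the period-closure step, which was needed only in order to integrate to minimal surfaces. Concretely, the goal is to establish the following parametric statement: given a closed subspace $Q$ of a compact Hausdorff space $P$ and a continuous family $\phi:M\times P\to\Acal$ with $\phi_p\in\Ascr^1_*(M,A)$ for all $p\in P$ and $\phi_p\in\Ascr^1_\infty(M,A)$ for $p\in Q$, there is a homotopy $\phi^t$ through $\Ascr^1_*(M,A)$ that is fixed on $Q$, begins at $\phi$, ends in $\Ascr^1_\infty(M,A)$, and stays arbitrarily close to $\phi$ uniformly on a prescribed compact $L\subset M$ and uniformly in $p$. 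The construction is the one already in the proof of Theorem \ref{th:hp1}: choose auxiliary points $x'_j\in D_j\setminus\{x_j\}$, a continuous $r_j:P\to[0,\infty)$ vanishing on $Q$ and large enough on $P\setminus Q$ to give the analogue of \eqref{eq:rj}, extend $\phi_p$ on a small disc neighbourhood of each $x'_j$ by $(1+t\,r_j(p))\,\phi_p$ (which remains in $A$ since $A$ is a cone), and approximate the extended holomorphic family by an algebraic family via the parametric approximation in Corollary \ref{cor:flexible}; the maximum principle then forces a pole at every end, exactly as in \eqref{eq:pole}.

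Given this parametric statement, density is the case $P=\{*\}$, $Q=\emptyset$, while the weak homotopy equivalence follows by the standard argument: surjectivity of $\pi_k(\Ascr^1_\infty(M,A))\to\pi_k(\Ascr^1_*(M,A))$ from $P=S^k$, $Q=\{*\}$, and injectivity from $P=S^k\times[0,1]$, $Q=S^k\times\{0,1\}$. The main obstacle I foresee is a bookkeeping one: verifying that Corollary \ref{cor:flexible} simultaneously preserves the initial condition at $t=0$, the constancy over $Q$, and the close approximation on $L\cup\{x'_1,\ldots,x'_m\}$. This is precisely the content of items (a)--(c) of Theorem \ref{th:ahRunge2} with trivial $\zeta$-parameters, so once the extended family is set up carefully the conclusion follows.
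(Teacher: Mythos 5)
Your proposal is correct and follows essentially the same route as the paper: the paper's proof of Proposition \ref{prop:poles} likewise consists of observing that, by Corollary \ref{cor:flexible}, the pole-creation device from the proof of Theorem \ref{th:hp1} (the auxiliary points $x'_j$, the scaling factors $1+t\,r_j(p)$ with $r_j$ vanishing on $Q$, algebraic approximation, and the maximum principle) applies to any flexible cone $A$, yielding exactly your parametric approximation statement rel $Q$, from which density and the weak homotopy equivalence follow by the standard choices of $(P,Q)$. Your explicit openness argument via continuity of the Laurent coefficients $a_{-N_j}$ on small loops is a correct elaboration of a point the paper treats as immediate.
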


\begin{proof}
By Corollary \ref{cor:flexible}, the proof of Theorem \ref{th:hp1} 
applies to any cone $A$ as in the proposition. Given a compact Hausdorff
space $P$, a closed subspace $Q\subset P$, and a continuous
map $\phi:P\to \Ascr^1_*(M,A)$ such that $\phi(p)\in \Ascr^1_\infty(M,A)$
for $p\in Q$, we have seen that $\phi$ can be approximated 
by continuous maps $\tilde \phi:P\to \Ascr^1_\infty(M,A)$
such that $\tilde \phi(p)=\phi(p)$ for all $p\in Q$. 
This easily implies the proposition.
\end{proof}

We also need the following result, which 
holds in particular for the null quadric.

\begin{proposition}\label{prop:nondegenerate}
Let $M$ be an affine Riemann surface and 
$A\subset \C^n\setminus\{0\}$, $n\ge 2$, be a smooth connected 
flexible cone not contained in any hyperplane. Then 
$\Ascr^1_*(M,A)$ is a dense open subset of $\Ascr^1(M,A)$ 
and the inclusion 
$
	\Ascr^1_*(M,A) \hookrightarrow \Ascr^1(M,A)
$
is a weak homotopy equivalence.
\end{proposition}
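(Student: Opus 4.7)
The plan is to mirror the structure of the proof of Proposition \ref{prop:poles}: build a finite-dimensional algebraic deformation family that generically produces nondegenerate sections, then bootstrap density to a weak homotopy equivalence via a standard partition-of-unity selection. Openness of $\Ascr^1_*(M,A)$ in $\Ascr^1(M,A)$ is immediate: nondegeneracy of $\phi$ is witnessed by finitely many points $x_1,\ldots,x_N\in M$ and tangent vectors $v_j\in T_{\phi(x_j)}A$ whose $n\times n$ determinant does not vanish, a continuous (hence open) condition in the compact-open topology.

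For the core step, I would invoke the algebraic fibre-dominating spray $S:\mathcal{A}\times\C^r\to\mathcal{A}$ furnished by flexibility of $A$ together with Corollary \ref{cor:flexible}. Since $A$ is a cone, $\C z\subset T_z A$ for every $z\in A$, and since $A$ is not contained in any hyperplane, the set
\[
U=\bigl\{(z_1,\ldots,z_k)\in A^k : \textstyle\sum_{j=1}^{k} T_{z_j}A=\C^n\bigr\}
\]
is a nonempty Zariski open subset of the irreducible variety $A^k$ for any $k\ge n$, hence Zariski dense. Fix such a $k$ and distinct points $x_1,\ldots,x_k\in M$, and choose regular functions $g_1,\ldots,g_k\in\Oscr(M)$ with $g_i(x_j)=\delta_{ij}$, which exist since $M$ is affine algebraic. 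For $\eta=(\eta_1,\ldots,\eta_k)\in\C^{kr}$ and an algebraic section $\phi:M\to\mathcal{A}$, set
\[
\phi_\eta(x)=S\bigl(\phi(x),\,\textstyle\sum_{i=1}^{k} g_i(x)\eta_i\bigr),\quad x\in M.
\]
This is algebraic in $x$ with $\phi_0=\phi$ and $\phi_\eta(x_j)=S(\phi(x_j),\eta_j)$. By fibre-domination of $S$, the evaluation map $\eta\mapsto(\phi_\eta(x_j))_{j=1}^k\in A^k$ is submersive at $\eta=0$, so for $\eta$ in a dense open subset arbitrarily close to $0$ the tuple lies in $U$ and $\phi_\eta$ is nondegenerate. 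This yields density.

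For the weak homotopy equivalence, let $\phi:P\to\Ascr^1(M,A)$ be continuous from a compact Hausdorff pair $(P,Q)$ with $\phi(p)\in\Ascr^1_*(M,A)$ for $p\in Q$. The subset
\[
V=\bigl\{(p,\eta)\in P\times\C^{kr}:\phi(p)_\eta\in\Ascr^1_*(M,A)\bigr\}
\]
is open by openness of $\Ascr^1_*(M,A)$, contains $Q\times\{0\}$, and has nonempty fibres over every $p\in P$ by the previous paragraph. A partition-of-unity argument on $P$, with the cover refined finely enough that convex combinations of locally chosen spray parameters remain inside $V$, produces a continuous $\eta:P\to\C^{kr}$ with $\eta|_Q\equiv 0$ and $(p,\eta(p))\in V$ for all $p$. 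The straight-line homotopy $\phi^t(p)=\phi(p)_{t\eta(p)}$, $t\in[0,1]$, is then the desired deformation rel $Q$ inside $\Ascr^1(M,A)$ from $\phi$ to a map into $\Ascr^1_*(M,A)$; note that we need not---and do not---require $\phi^t$ to be nondegenerate for intermediate $t$ and $p\in P\setminus Q$.

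The main obstacle is the continuous selection step, since the fibres $V_p$ are open but generally not convex, and a na\"ive partition-of-unity would produce convex combinations that might escape $V$. This is resolved by choosing each local spray parameter together with a small ball in $\C^{kr}$ on which openness of $V$ in $P\times\C^{kr}$ traps the whole ball inside $V$, and then refining the cover of $P$ so that, at each $p$, all locally selected $\eta_i$ lie in such a common ball; this is routine but essential. After this selection is in hand, the rest of the argument is formal.
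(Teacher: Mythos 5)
Your openness and density arguments are essentially fine (openness via a finite spanning witness, density via the fibre-dominating algebraic spray from Corollary \ref{cor:flexible} cut off by the functions $g_i$, submersivity of the evaluation at the points $x_j$, and density of $U$ in the irreducible variety $A^k$). The genuine gap is the step that carries the whole weak homotopy equivalence: the continuous selection of $\eta:P\to\C^{kr}$ with $\eta|_Q=0$ and $(p,\eta(p))\in V$. Your proposed fix --- ``refine the cover of $P$ so that, at each $p$, all locally selected $\eta_i$ lie in a common ball inside $V_p$'' --- is circular: refining the cover does nothing to make the \emph{independently} chosen local values at overlapping members close to one another; arranging that is exactly the selection problem you are trying to solve, and Michael-type selection fails here because the fibres $V_p$ are not convex. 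Moreover, the abstract principle you are implicitly invoking (an open set $V\subset P\times\C^N$ with dense fibres, containing $Q\times\{0\}$, admits a section vanishing on $Q$) is false: take $P=\overline\D$, $Q=b\D$, $N=1$ and $\Sigma=(P\times\C)\setminus V=\{(p,\eta):\eta=p\}$, whose fibres are single points (proper algebraic subvarieties) with $0\notin\Sigma_p$ for $p\in Q$; any continuous $\eta$ vanishing on $b\D$ satisfies $\eta(p)=p$ for some $p$, by a winding number argument applied to $\eta(p)-p$. In your construction the number $k\ge n$ of perturbation points is fixed once and for all, so the degeneracy locus has fibre codimension bounded independently of $\dim P$ (for the null quadric and $k=n$ it is small), and nothing in your argument rules out obstructions of exactly this kind when $\dim P$ is large; in any case no proof of the selection is given.

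This missing step is precisely what the paper's proof outsources: it adapts the parametric general position theorem \cite[Theorem 5.4]{ForstnericLarusson2019CAG}, whose proof makes the degeneracy locus have fibre codimension large compared with $\dim P$ (by perturbing at sufficiently many points, the number depending on the parameter space) and then obtains the section vanishing on $Q$ by a relative transversality argument; flexibility of $A$ enters only to replace the complete holomorphic vector fields used there by LNDs, so that the sprays are algebraic, and algebraically nontrivial $TM$ is handled by passing to a Zariski chart and extending the LNDs by functions vanishing on the complement. Your spray construction via Corollary \ref{cor:flexible} is a reasonable substitute for that last part, but to complete the proof you must either carry out the codimension-versus-$\dim P$ general position argument yourself or cite and adapt the proof of \cite[Theorem 5.4]{ForstnericLarusson2019CAG} as the paper does.
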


\begin{proof}
Let $\Acal \to M$ be the algebraic subbundle of $(T^*M)^{\oplus n}$
determined by the cone $A$. If $TM$ is algebraically trivial, the conclusion 
follows from the algebraic analogue of the 
general position theorem \cite[Theorem 5.4]{ForstnericLarusson2019CAG}.  
The flexibility of $A$ allows us to replace the complete holomorphic vector 
fields, used in \cite[proof of Theorem 5.4]{ForstnericLarusson2019CAG},  
by complete algebraic vector fields on $A$ with algebraic flows (that is, LNDs).  
If $TM$ is not algebraically trivial, choose a Zariski chart $U\subset M$
(the complement of finitely many points in $M$) such that $TU$ 
is algebraically trivial. The previous argument holds over $U$ 
by using LNDs tangent to the fibre $A$ of the trivial bundle
$\Acal|_U\to U$. Each of them can be extended to $\Acal$ by 
multiplication with an algebraic function on $M$ that vanishes to 
a sufficiently high order on the finite set $M\setminus U$. This gives 
LNDs on $\Acal$ tangent to the fibres of the projection
$\Acal\to M$ and vanishing over $M\setminus U$.
By using their flows, the proof can be completed as in 
\cite[proof of Theorem 5.4]{ForstnericLarusson2019CAG}. 
\end{proof}

%
%
\begin{proof}[Proof of Theorem \ref{th:whe1}]  By a standard argument, 
Theorem \ref{th:hp1} implies that the $(1,0)$-differential $\partial$ is a weak 
homotopy equivalence as a map from $\CMI_*(M,\R^n)$ or 
$\Re \NC_*(M,\C^n)$ to $\Ascr_*^1(M,\boldA)$ or its subspace
$\Ascr_\infty^1(M,\boldA)$. Namely, to obtain an 
epimorphism at the level of $\pi_0$, we take $P$ to be a point and $Q$ to 
be empty, and to obtain a monomorphism at the level of $\pi_{k-1}$ and, 
at the same time, an epimorphism at the level of $\pi_k$ for each $k\geq 1$, 
we take $P$ to be the closed unit ball in $\R^k$ and $Q$ to be its boundary,
the unit $(k-1)$-sphere. Since the inclusion 
$\Ascr_*^1(M,\boldA)\hra \Ascr^1(M,\boldA)$ is a weak homotopy equivalence
by Proposition \ref{prop:nondegenerate}, the same conclusion 
holds for the maps $\di:\CMI_*(M,\R^n)\to \Ascr^1(M,\boldA)$ and
$\di:\Re \NC_*(M,\C^n)\to \Ascr^1(M,\boldA)$.

If two of the maps in a commutative diagram of three maps are weak 
homotopy equivalences, then so is the third map. It follows that the inclusion 
$\Re \NC_*(M,\C^n) \hookrightarrow \CMI_*(M,\R^n)$ is a weak homotopy equivalence as well. 
\end{proof}

The inclusion $\Re \NC_*(M,\C^n) \hookrightarrow \CMI_*(M,\R^n)$ 
also satisfies an h-principle, analogous to the corresponding h-principle 
in the holomorphic category \cite[Theorem 4.1]{ForstnericLarusson2019CAG}. 
The basic h-principle for this inclusion is given by Theorem \ref{th:flux}. 
For the parametric case and more general flexible cones, see Theorem 
\ref{th:flux2}. 

We remark that Theorem \ref{th:hp1} can be upgraded with approximation, 
(jet) interpolation, and flux conditions. We will not deal  
with the first two generalisations, which are well understood from
earlier works (see Chapters 3 and 4 in \cite{AlarconForstnericLopez2021}),
but will discuss the third in the following section.

%
%
\section{Further results on the flux map}
\label{sec:flux2}

\noindent
Let $\CMI_{\mathrm{nf}}(M,\R^n)$ denote the space of nonflat 
conformal minimal immersions from an open Riemann surface $M$ 
to $\R^n$, $n\ge 3$. The flux map 
\[
	\Flux: \CMI_{\mathrm{nf}}(M,\R^n)\to H^1(M,\R^n),
\]
given by \eqref{eq:FluxuC}, is a Serre fibration 
by \cite[Theorem 1.4]{AlarconLarusson2022complete}.
This implies that the weak homotopy type of the space of nonflat conformal minimal immersions $M\to\R^n$ with a given flux $\Fcal \in H^1(M,\R^n)$
does not depend on $\Fcal$.

Assume now that $M$ is an affine Riemann surface. 
Given $\Fcal\in H^1(M,\R^n)$, we denote by 
$\CMI_*^\Fcal(M,\R^n)$ the space of complete nonflat conformal minimal 
immersions $u:M\to\R^n$ of finite total curvature with $\Flux_u=\Fcal$. 
In particular, $\CMI_*^0(M,\R^n)=\Re \NC_*(M,\C^n)$. 
A straightforward modification of the proof of Theorem \ref{th:whe1} 
shows that the maps in the diagram
\[
\xymatrix{
	\CMI_*^\Fcal(M,\R^n)  \ar@{^{(}->}[r]^{\iota} \ar[dr]_\di  
					&  \CMI_*(M,\R^n) \ar[d]^\di   \\ 
	  				&   \Ascr^1(M,\boldA) 
}
\]
are weak homotopy equivalences (cf.\ diagram \eqref{eq:diagram}).
Note that Theorem \ref{th:whe1} corresponds to the choice $\Fcal=0$,
but this choice is not important in our arguments. 
The only difference in the proof is that, 
instead of using \cite[Theorem 5.3]{ForstnericLarusson2019CAG} 
to change the fluxes to $0$, we use the more general results in 
\cite{AlarconLarusson2017IJM,AlarconLarusson2022complete} which enable 
us to change the fluxes to any given flux. This implies that the 
weak homotopy type of the space $\CMI_*^\Fcal(M,\R^n)$ is the same 
for all $\Fcal \in H^1(M,\R^n)$. 

We remark that \cite[Theorem 5.3]{ForstnericLarusson2019CAG} can be 
extended as follows. 

\begin{theorem}\label{th:FL19}
Let $M$ be an open Riemann surface, $\theta$ be a nowhere vanishing 
holomorphic $1$-form on $M$, $A\subset \C^n\setminus\{0\}$ be a smooth 
connected Oka cone as in \eqref{eq:A} not contained in any hyperplane, 
$P$ be a compact Hausdorff space, and $Q\subset P$ be a closed subspace. 
Also let $f:M\times P\to A$ and $\Fcal:P\times[0,1]\to H^1(M,\C^n)$ 
be continuous maps such that $f_p=f(\cdot,p):M\to A$ is a nondegenerate 
holomorphic map for every $p\in P$ and $\Fcal(p,t)$ equals the cohomology 
class of $f_p\theta$ for every $(p,t)\in (P\times\{0\})\cup (Q\times [0,1])$. 
Then there is a homotopy $f^t:M\times P\to A$ $(t\in[0,1])$ such that 
$f_p^t=f^t(\cdot,p):M\to A$ is a nondegenerate holomorphic map for every 
$(p,t)\in P\times[0,1]$, $f_p^t=f_p$ for every 
$(p,t)\in (P\times\{0\})\cup (Q\times[0,1])$, and the cohomology class of 
$f_p^t\theta$ equals $\Fcal(p,t)$ for every $(p,t)\in P\times[0,1]$.
\end{theorem}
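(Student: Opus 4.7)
The plan is to adapt the proof of \cite[Theorem 5.3]{ForstnericLarusson2019CAG} so as to track the prescribed homotopy of cohomology classes rather than a fixed target. Fix an embedded basis $C_1,\ldots,C_s$ of $H_1(M,\Z)$ contained in a connected Runge compact in $M$. Identifying $H^1(M,\C^n)$ with $(\C^n)^s$ via the period map $\Pcal(\phi)=\bigl(\int_{C_i}\phi\bigr)_{i=1}^s$, the datum $\Fcal$ becomes a continuous map $\mathbf P:P\times[0,1]\to(\C^n)^s$, and the hypothesis reads $\mathbf P(p,t)=\Pcal(f_p\theta)$ on $(P\times\{0\})\cup(Q\times[0,1])$.

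First I would construct, using the nondegeneracy of each $f_p$ together with \cite[Lemma 3.2.1]{AlarconForstnericLopez2021} (whose proof works for any smooth Oka cone not contained in a hyperplane), local period-dominating holomorphic sprays $(p,\zeta)\mapsto f_{p,\zeta}:M\to A$ over small open neighborhoods in $P$, with $f_{p,0}=f_p$ and the derivative of $\zeta\mapsto\Pcal(f_{p,\zeta}\theta)$ surjective at $\zeta=0$. By compactness of $P$, a finite cover and a partition of unity in $P$ produce a single global spray $(p,\zeta)\mapsto f_{p,\zeta}$ for $p\in P$ and $\zeta$ in a ball $B\subset\C^N$, with $f_{p,0}=f_p$ and period domination at $\zeta=0$ for every $p$; over $Q$ I would arrange the spray to be independent of $\zeta$, which is harmless since the periods are already correct there.

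Next I would produce a rough homotopy $g_p^t:M\to A$ of nondegenerate holomorphic maps with $g_p^0=f_p$, $g_p^t=f_p$ on $Q\times[0,1]$, and $\Pcal(g_p^t\theta)$ as close as desired to $\mathbf P(p,t)$. This uses the parametric Oka principle for maps to $A$ combined with the openness of the period map on nondegenerate maps: I would subdivide $[0,1]$ into sufficiently fine pieces and iterate, at each step perturbing the current family via the period-dominating spray along the next small increment of $\mathbf P$, propagating nondegeneracy by continuity. Finally, the implicit function theorem applied fiberwise to the combined spray yields a continuous map $\zeta:P\times[0,1]\to B$, vanishing on $(P\times\{0\})\cup(Q\times[0,1])$, such that the family $f_p^t$ obtained by evaluating the spray at $\zeta(p,t)$ satisfies $\Pcal(f_p^t\theta)=\mathbf P(p,t)$; nondegeneracy is automatic because $\zeta(p,t)$ stays near $0$ and nondegeneracy is an open condition.

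The main obstacle I foresee is the parametric construction of the rough homotopy $g_p^t$ realizing the prescribed continuous family of cohomology classes through nondegenerate $A$-valued holomorphic maps, subject to the conditions on $Q$. The period map on nondegenerate maps is not globally a submersion, so a single direct implicit function argument cannot close the cohomology gap when $\mathbf P(p,t)-\mathbf P(p,0)$ is large; instead one must combine a fine subdivision of $[0,1]$ with the iterative use of local period-dominating sprays, and continuously glue the resulting increments in $p\in P$. The careful execution of this step is the technical heart of the proof, but it follows the template established in \cite{AlarconLarusson2017IJM,AlarconLarusson2022complete} for analogous flux-prescription results for minimal surfaces, now transplanted to the setting of $A$-directed holomorphic maps.
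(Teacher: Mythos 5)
Your overall strategy — period‑dominating sprays, an approximate realisation of the prescribed periods, and an implicit‑function‑theorem correction, all following the template of \cite[Theorem 5.3]{ForstnericLarusson2019CAG} and \cite{AlarconLarusson2022complete} — is the same as the paper's (which in fact gives only a sketch and refers the reader to exactly those sources). However, the organisation of your argument has a flaw that would cause the final step to fail as written.

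You build a single period‑dominating spray $(p,\zeta)\mapsto f_{p,\zeta}$ centred at the \emph{initial} family $f_p$, and then plan to run the implicit function theorem on this spray to solve $\Pcal(f_{p,\zeta(p,t)}\theta)=\mathbf P(p,t)$. But this cannot work: for $\zeta$ in the small ball $B$ the periods $\Pcal(f_{p,\zeta}\theta)$ only sweep out a small neighbourhood of $\Pcal(f_p\theta)=\mathbf P(p,0)$, whereas $\mathbf P(p,t)$ is in general far from $\mathbf P(p,0)$ for $t$ away from $0$. The implicit function theorem has to be applied to a period‑dominating spray centred at the \emph{rough homotopy} $g_p^t$ (parametrised by $(p,t)\in P\times[0,1]$), constructed via \cite[Lemma 3.2.1]{AlarconForstnericLopez2021} \emph{after} $g$ has been built, so that the target periods $\mathbf P(p,t)$ lie in the image of the spray's period map near $\zeta=0$ for each $(p,t)$. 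Your first step — the spray around $f_p$ — is useful at most for the first subdivision increment of the rough‑homotopy construction; later increments require fresh sprays around the current family, and the final correction requires the spray around $g$.

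A secondary inconsistency: you propose to make the spray constant in $\zeta$ over $Q$, "which is harmless since the periods are already correct there," while simultaneously asking for period domination for every $p\in P$. These two requirements contradict each other, since a spray independent of $\zeta$ has zero period derivative. The standard device, and the one you should use, is to keep the spray dominating over all of $P$ and to conclude $\zeta(p,t)=0$ for $(p,t)\in (P\times\{0\})\cup(Q\times[0,1])$ from the \emph{uniqueness} part of the implicit function theorem, because the equation is already solved by $\zeta=0$ at those parameters. With the spray recentred at $g_p^t$ and the $Q$‑issue handled by IFT uniqueness rather than by degenerating the spray, your sketch becomes a correct outline of the intended argument.
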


The advantage of Theorem \ref{th:FL19} 
with respect to \cite[Theorem 5.3]{ForstnericLarusson2019CAG} 
is that it enables one to prescribe the cohomology class of all $1$-forms 
in the family $f_p^t\theta$ for $(p,t)\in P\times[0,1]$, 
and not just of those with $t=1$, and there are 
no restriction on the cohomology classes of the $1$-forms 
$f_p\theta$ for $p\in Q$. The proof is similar to that of 
\cite[Theorem 5.3]{ForstnericLarusson2019CAG} but uses the full strength 
of \cite[Lemma 3.1]{ForstnericLarusson2019CAG}, following the ideas 
in \cite[proofs of Lemma 3.1 and Theorem 1.1]{AlarconLarusson2022complete}. 
Theorem \ref{th:FL19} can also be improved by adding an approximation 
condition as in \cite[Theorem 5.3(2)]{ForstnericLarusson2019CAG}, 
but we do not need it for the applications in what follows. 
We leave the details to the reader.

Replacing \cite[Theorem 5.3]{ForstnericLarusson2019CAG} by 
Theorem \ref{th:FL19} in the proof of Theorem \ref{th:hp1} 
gives the following h-principle for cohomology classes of nonflat 
algebraic $1$-forms on an affine Riemann surface 
with values in a flexible cone $A\subset \C^n_*$. 
It is an extension of Theorem \ref{th:hp1}, as well as an algebraic 
analogue of Theorem \ref{th:FL19} itself.
Recall that $\Ascr^1_\infty(M,A)$ denotes the space of nondegenerate 
algebraic $1$-forms with values in $A$ and with an effective pole at every
end of $M$.

\begin{theorem}\label{th:flux2}
Assume that $M$ is an affine Riemann surface and 
$A\subset \C^n_*$, $n\geq 2$, is a smooth connected flexible cone, not 
contained in any hyperplane. Let $P$ be a compact Hausdorff space and 
$Q\subset P$ be a closed subspace, and let $\phi:P\to \Ascr_*^1(M,A)$ 
and $\Fcal:P\times[0,1]\to H^1(M,\C^n)$ be continuous maps such that 
$\Fcal(p,t)$ equals the cohomology class of $\phi(p)$  
for all $(p,t) \in (P\times\{0\})\cup (Q\times [0,1])$. 
Then there is a continuous map 
$\Phi:P\times[0,1]\to \Ascr_*^1(M,A)$ such that $\Phi(p,t)=\phi(p)$ for all 
$(p,t)\in (P\times\{0\})\cup(Q\times[0,1])$ and the cohomology class of 
$\Phi(p,t)$ equals $\Fcal(p,t)$ for all $(p,t)\in P\times[0,1]$.
Furthermore, if $\phi(p)\in \Ascr^1_\infty(M,A)$ for all $p\in Q$,  
then $\Phi$ can be chosen such that, in addition to the above, 
$\Phi(p,1)\in \Ascr^1_\infty(M,A)$ for all $p\in P$.
If $\phi(p)\in \Ascr^1_\infty(M,A)$ for all $p\in P$,  
then $\Phi$ can be chosen such that $\Phi(p,t)\in \Ascr^1_\infty(M,A)$ 
for all $p\in P$ and $t\in[0,1]$.
\end{theorem}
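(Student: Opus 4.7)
The plan is to follow the proof of Theorem \ref{th:hp1} almost verbatim, making two substitutions: replace the appeal to \cite[Theorem 5.3]{ForstnericLarusson2019CAG} by the stronger Theorem \ref{th:FL19}, which controls the cohomology class of the whole holomorphic homotopy rather than only its endpoint, and arrange the period-dominating spray and implicit function step so that the resulting algebraic $1$-forms realise the prescribed class $\Fcal(p,t)$ for every $(p,t)\in P\times[0,1]$ rather than just the zero class at $t=1$.

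Concretely, I would first choose a nowhere vanishing holomorphic $1$-form $\theta$ on $M$ (which exists by Gunning--Narasimhan) and write $\phi(p)=f_p\theta$ with $f_p:M\to A$ a nondegenerate holomorphic map depending continuously on $p\in P$. Theorem \ref{th:FL19}, applied to $\{f_p\}_{p\in P}$ and the class homotopy $\Fcal$, supplies a continuous family $f^t:M\times P\to A$ of nondegenerate holomorphic maps with $f^0_p=f_p$, $f^t_p=f_p$ on $(P\times\{0\})\cup(Q\times[0,1])$, and the cohomology class of $f^t_p\theta$ equal to $\Fcal(p,t)$ throughout. Next I would import the geometric setup from the proof of Theorem \ref{th:hp1}: a compact $L\subset M$ containing a smooth based homology basis $\Cscr=\{C_1,\ldots,C_s\}$ of $H_1(M,\Z)$ and with $M\setminus L=\bigcup_{j=1}^m D_j\setminus\{x_j\}$, a Runge neighbourhood $V\Supset L$, and a $\Pcal$-period-dominating holomorphic spray $f^t_{\zeta,p}:V\to A$ with parameter $\zeta$ in a ball $B\subset \C^N$ satisfying $f^t_{0,p}=f^t_p|_V$, furnished by \cite[Lemma 3.2.1]{AlarconForstnericLopez2021}. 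Since $A$ is flexible, the fibre of $\Acal\to M$ is flexible, so Corollary \ref{cor:flexible} yields an algebraic approximation $\tilde f^t_{\zeta,p}\theta:M\to\Acal$, uniform on $L'\times B'\times P\times[0,1]$ and matching $f^t_{0,p}\theta$ on $(P\times\{0\})\cup(Q\times[0,1])$. Period-domination and the parametric implicit function theorem then produce a continuous map $\zeta:P\times[0,1]\to B'$ vanishing on that locus such that $\Pcal(\tilde f^t_{\zeta(p,t),p}\theta)=\Pcal(f^t_p\theta)$, and setting $\Phi(p,t):=\tilde f^t_{\zeta(p,t),p}\theta$ completes the general case of the theorem.

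The two pole assertions are grafted on by the maximum-principle device of \eqref{eq:xjprime}--\eqref{eq:extension}: choose auxiliary points $x'_j\in D_j\setminus\{x_j\}$ together with continuous weights $r_j:P\to[0,\infty)$ that vanish on $Q$ (for the $t=1$ pole assertion) or are identically $1$ (for the all-$t$ pole assertion), and extend the spray to small $W_j\Subset D_j\setminus\{x_j\}$ centred at $x'_j$ by $(1+tr_j(p))f_p$. Because $W=\bigcup_{j=1}^m W_j$ is disjoint from $|\Cscr|\subset V$, the period map $\Pcal$ is unaffected by the extension, so the implicit-function correction survives intact; the ensuing analogue of \eqref{eq:pole} then forces effective poles at every end of $M$. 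The main obstacle is the compatibility of the three simultaneous requirements -- preservation on $Q$, exact cohomology class $\Fcal(p,t)$, and prescribed pole behaviour at $\overline M\setminus M$ -- and this is resolved, exactly as in Theorem \ref{th:hp1}, by the spatial decoupling of the period-defining locus $|\Cscr|\subset V$ from the pole-forcing locus $W$, combined with the flexibility hypothesis on $A$ that delivers the parametric algebraic approximation in Corollary \ref{cor:flexible}.
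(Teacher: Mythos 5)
Your proposal is correct and follows precisely the route the paper indicates (which is stated only as a one-line instruction: replace \cite[Theorem 5.3]{ForstnericLarusson2019CAG} by Theorem \ref{th:FL19} in the proof of Theorem \ref{th:hp1}). In particular you correctly identify the one genuinely new point the paper leaves implicit: in Theorem \ref{th:hp1} the period correction is solved only at $t=1$ and then propagated by $t\mapsto t\zeta(p)$, whereas here the cohomology class must equal $\Fcal(p,t)$ at every $t$, so the implicit-function-theorem step must produce a two-parameter map $\zeta(p,t)$ vanishing on $(P\times\{0\})\cup(Q\times[0,1])$, with $\Phi(p,t)=\tilde f^t_{\zeta(p,t),p}\theta$ — and you carry out exactly this change.
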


The following is an immediate corollary.
 
\begin{corollary}\label{cor:flux1}
If $M$ and $A$ are as in Theorem \ref{th:flux2}, then the map
$\Fcal:\Ascr_*^1(M,A)\to H^1(M,\C^n)$ taking $\phi\in \Ascr_*^1(M,A)$
to its cohomology class is a Serre fibration. 
The same holds for the map $\Fcal:\Ascr_\infty^1(M,A)\to H^1(M,\C^n)$. 
\end{corollary}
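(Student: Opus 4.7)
The plan is to deduce this corollary directly from Theorem \ref{th:flux2}, which has already been set up to provide exactly the parametric lifting data a Serre fibration requires. Recall that a continuous map is a Serre fibration if and only if it has the homotopy lifting property with respect to every disc $D^n$, $n\geq 0$ (equivalently, with respect to every finite CW complex). Since each $D^n$ is a compact Hausdorff space, Theorem \ref{th:flux2} is applicable with $P=D^n$ and $Q=\varnothing$.

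First I would unwind what a lifting problem for $\Fcal: \Ascr_*^1(M,A)\to H^1(M,\C^n)$ asks: given a continuous map $\phi:P\to \Ascr_*^1(M,A)$ and a continuous homotopy $\Fcal:P\times[0,1]\to H^1(M,\C^n)$ with $\Fcal(p,0)$ equal to the cohomology class of $\phi(p)$ for every $p\in P$, we must produce $\Phi:P\times[0,1]\to \Ascr_*^1(M,A)$ with $\Phi(p,0)=\phi(p)$ and cohomology class of $\Phi(p,t)$ equal to $\Fcal(p,t)$ for all $(p,t)\in P\times[0,1]$. Taking $Q=\varnothing$ in Theorem \ref{th:flux2}, the compatibility hypothesis (that $\Fcal(p,t)$ equals the cohomology class of $\phi(p)$ on $(P\times\{0\})\cup(Q\times[0,1])$) collapses to the starting condition at $t=0$, which is exactly given. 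The theorem then delivers $\Phi$ with the required properties, establishing the homotopy lifting property for $P=D^n$ and hence that $\Fcal$ is a Serre fibration.

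The second statement, that $\Fcal:\Ascr_\infty^1(M,A)\to H^1(M,\C^n)$ is also a Serre fibration, is obtained by the same argument: since $\phi(p)\in \Ascr_\infty^1(M,A)$ for every $p\in P$, the last clause of Theorem \ref{th:flux2} guarantees that the lift $\Phi$ can be chosen to take values in $\Ascr_\infty^1(M,A)$ throughout $P\times[0,1]$, which is precisely what the homotopy lifting property in the smaller space demands.

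There is essentially no obstacle at this stage, since all the real work has been done in Theorem \ref{th:flux2} (whose proof combines the h-principle machinery of Theorem \ref{th:hp1} with the flux-prescription device of Theorem \ref{th:FL19}). The corollary is a purely formal translation between the language of homotopy lifting for Serre fibrations and the explicit parametric prescription of cohomology classes produced by Theorem \ref{th:flux2}. The only minor point worth flagging is to note that Theorem \ref{th:flux2} actually yields more than a Serre fibration: it provides relative homotopy lifting with respect to every closed pair $(P,Q)$ of compact Hausdorff spaces, so one could equally well record in Corollary \ref{cor:flux1} the stronger statement that $\Fcal$ has the homotopy lifting property for all compact Hausdorff spaces.
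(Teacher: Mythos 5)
Your argument is correct and coincides with the paper's intended proof: the paper states the corollary is immediate from Theorem \ref{th:flux2}, and your unwinding with $P=D^n$, $Q=\varnothing$, together with the final clause of that theorem for the $\Ascr^1_\infty(M,A)$ case, is exactly what is meant. No gap.
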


Note that the flux homomorphism 
$\Flux_u \in H^1(M,\R^n)$ of a conformal minimal 
immersion $u:M\to\R^n$ is the imaginary component of the period 
homomorphism $\Fcal_u \in H^1(M,\C^n)$ of the 1-form $\di u$ 
with vanishing real part, $\Re \Fcal_u =0$. Hence,
the following is a corollary to Theorem \ref{th:flux2} by 
following \cite[proof of Theorem 1.4]{AlarconLarusson2022complete}.

\begin{corollary}\label{cor:flux2}
If $M$ is an affine Riemann surface, then the flux map 
\[
	\Flux: \CMI_*(M,\R^n)\to H^1(M,\R^n)
\]
is a Serre fibration. 
\end{corollary}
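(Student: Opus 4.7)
The plan is to deduce the corollary from Theorem \ref{th:flux2} by translating the flux-lifting problem on $\CMI_*(M,\R^n)$ into an equivalent period-class lifting problem on $\Ascr^1_\infty(M,\boldA)$, with the $(1,0)$-differential as the bridge. To show $\Flux$ is a Serre fibration it suffices to solve the standard homotopy lifting problem with base $P=D^k$ for every $k\ge 0$. So I fix a continuous $u_0:P\to\CMI_*(M,\R^n)$ and a homotopy $\Fcal:P\times[0,1]\to H^1(M,\R^n)$ with $\Flux_{u_0(p)}=\Fcal(p,0)$, and seek a continuous lift $u:P\times[0,1]\to\CMI_*(M,\R^n)$ with $u(\cdotp,0)=u_0$ and $\Flux_{u(p,t)}=\Fcal(p,t)$.

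The first step is to pass to $1$-forms. Set $\phi_0(p):=\di u_0(p)\in\Ascr^1_\infty(M,\boldA)$; since $u_0(p)$ is real-valued, the cohomology class $\Fcal_{\phi_0(p)}\in H^1(M,\C^n)$ of $\phi_0(p)$ has vanishing real part, and the identity $d^c u=\Im(2\di u)$ recalled in the introduction yields the relation $\Flux_u=2\Im\Fcal_{\di u}$. Accordingly I define the complexified target
\[
\wt\Fcal:P\times[0,1]\to H^1(M,\C^n),\qquad \wt\Fcal(p,t):=\tfrac{\imath}{2}\Fcal(p,t),
\]
so that $\wt\Fcal(p,0)$ coincides with the cohomology class of $\phi_0(p)$.

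Now I apply Theorem \ref{th:flux2} to the flexible cone $A=\boldA$ with $Q=\varnothing$ and initial data $(\phi_0,\wt\Fcal)$. Since $\phi_0(p)\in\Ascr^1_\infty(M,\boldA)$ for every $p\in P$, the final clause of that theorem produces a continuous map $\Phi:P\times[0,1]\to\Ascr^1_\infty(M,\boldA)$ with $\Phi(\cdotp,0)=\phi_0$ and with cohomology class $\wt\Fcal(p,t)$ throughout the homotopy. Because $\Re\Fcal_{\Phi(p,t)}=0$, every $\Phi(p,t)$ has vanishing real periods, so for a fixed basepoint $x_0\in M$ the formula
\[
u(p,t)(x):=u_0(p)(x_0)+\Re\int_{x_0}^{x}2\Phi(p,t)
\]
is path-independent and defines a continuous $P\times[0,1]\to C^\infty(M,\R^n)$. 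Since $\Phi(p,t)\in\Ascr^1_\infty(M,\boldA)$, the resulting map $u(p,t)$ is a conformal minimal immersion that is nonflat, of finite total curvature (as $\Phi(p,t)$ extends meromorphically to $\overline M$), and complete (as $\Phi(p,t)$ has an effective pole at every end), so $u(p,t)\in\CMI_*(M,\R^n)$. The identity $\Flux_{u(p,t)}=2\Im\Fcal_{\Phi(p,t)}=\Fcal(p,t)$ together with $u(\cdotp,0)=u_0$ then exhibits $u$ as the required lift.

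The one point that requires care, and is the conceptual heart of the argument, is that the homotopy $\Phi$ must land in $\Ascr^1_\infty(M,\boldA)$ at every intermediate time, not only at $t=1$, for otherwise the intermediate conformal minimal immersions might fail to be complete and leave $\CMI_*$. This is exactly what the last sentence of Theorem \ref{th:flux2} delivers; once it is invoked, the rest of the proof reduces to unpacking the definitions and verifying the routine compatibilities listed above.
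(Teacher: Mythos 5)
Your proof is correct and follows essentially the same route as the paper, which (very tersely) reduces the statement to Theorem \ref{th:flux2} via the observation that $\Flux_u$ is recovered from the period class $\Fcal_{\di u}\in H^1(M,\C^n)$, whose real part vanishes, and points to the proof of \cite[Theorem 1.4]{AlarconLarusson2022complete} for the details. You have simply spelled out those details explicitly: passing to $\phi_0=\di u_0$, complexifying the target flux via $\wt\Fcal=\tfrac{\imath}{2}\Fcal$, invoking the final clause of Theorem \ref{th:flux2} to keep the entire homotopy in $\Ascr^1_\infty(M,\boldA)$, and integrating back.
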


In the last part of this section, we give some further
remarks on completeness of nonflat conformal minimal immersions
of finite total curvature and of directed algebraic immersions.

Let $M$ be an open Riemann surface and $n\ge 3$. Denote by 
$\CMI_{\mathrm{nf}}(M,\R^n)$ the space of nonflat
conformal minimal immersions $M\to \R^n$ and by 
$\CMI_{\mathrm{nf}}^{\mathrm{c}}(M,\R^n)$ its subspace 
of complete immersions. By 
\cite[Corollary 1.2(a)]{AlarconLarusson2022complete} 
(see also \cite{AlarconLarusson2017IJM}), the inclusion 
$\CMI_{\mathrm{nf}}^{\mathrm{c}}(M,\R^n)\hra \CMI_{\mathrm{nf}}(M,\R^n)$
is a weak homotopy equivalence, and is a homotopy equivalence if $M$ 
has finite topological type.

Assume now that $M$ is an affine Riemann surface and denote by 
$\CMI_0(M,\R^n)$ the space of nonflat conformal minimal immersions 
of finite total curvature $M\to\R^n$ (including the incomplete ones). 
Consider the following diagram extending \eqref{eq:diagram}:
\begin{equation}\label{eq:diagram-extended}
\xymatrix{
	\Re \NC_*(M,\C^n)  \ar@{^{(}->}[r]^\iota \ar[dr]_\di & 
	\CMI_*(M,\R^n)  \ar@{^{(}->}[r]^{\iota_0} \ar[d]_\di  
					&  \CMI_0(M,\R^n) \ar[d]^\di & 
					  \\ 
	& \Ascr^1_\infty(M,\boldA) \ar@{^{(}->}[r] 
	&   \Ascr^1_*(M,\boldA) \ar@{^{(}->}[r] & \Ascr^1(M,\boldA)
	}
\end{equation}

\vspace{2mm}

\begin{corollary}\label{cor:whe-extended}
If $M$ is an affine Riemann surface,
then all the maps in the diagram \eqref{eq:diagram-extended} 
are weak homotopy equivalences.
\end{corollary}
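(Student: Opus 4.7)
The plan is to verify, arrow by arrow, that each of the seven maps in diagram \eqref{eq:diagram-extended} is a weak homotopy equivalence. The two inclusions in the bottom row are covered by Propositions \ref{prop:poles} and \ref{prop:nondegenerate}, and Theorem \ref{th:whe1} handles the diagonal inclusion $\iota$ together with the maps $\di:\Re\NC_*(M,\C^n)\to\Ascr^1(M,\boldA)$ and $\di:\CMI_*(M,\R^n)\to\Ascr^1(M,\boldA)$. Combining these with the 2-out-of-3 property applied to the chain $\Ascr^1_\infty\hookrightarrow\Ascr^1_*\hookrightarrow\Ascr^1$, I obtain that the two displayed $\di$-arrows from the top-left corners into $\Ascr^1_\infty$ are weak homotopy equivalences as well. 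Only two claims remain to be established: (i) the inclusion $\iota_0:\CMI_*(M,\R^n)\hookrightarrow\CMI_0(M,\R^n)$ is a weak homotopy equivalence, and (ii) the arrow $\di:\CMI_0(M,\R^n)\to\Ascr^1_*(M,\boldA)$ is a weak homotopy equivalence.

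To prove (i), I would test the weak homotopy equivalence property on a pair $(P,Q)$ with $P$ compact Hausdorff and $Q\subset P$ closed. Given a continuous map $u:P\to\CMI_0(M,\R^n)$ with $u_p\in\CMI_*(M,\R^n)$ for every $p\in Q$, the goal is to construct a homotopy $u^t:P\to\CMI_0(M,\R^n)$ $(t\in[0,1])$, constant on $Q$, such that $u^0=u$ and $u^1_p\in\CMI_*(M,\R^n)$ for all $p\in P$. The differentials $\phi_p=\di u_p$ form a continuous map $\phi:P\to\Ascr^1_*(M,\boldA)$ with $\phi_p\in\Ascr^1_\infty(M,\boldA)$ for $p\in Q$, since completeness of $u_p$ together with finite total curvature forces effective poles at all ends of $M$. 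Taking the $t$-constant homotopy $\Fcal(p,t)=[\phi_p]\in H^1(M,\C^n)$, I would apply Theorem \ref{th:flux2} to obtain a map $\Phi:P\times[0,1]\to\Ascr^1_*(M,\boldA)$ with $\Phi(p,0)=\phi_p$, $\Phi(p,t)=\phi_p$ for $p\in Q$, $[\Phi(p,t)]=[\phi_p]$ throughout, and $\Phi(p,1)\in\Ascr^1_\infty(M,\boldA)$ for every $p\in P$.

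Because $u_p$ is real-valued, the cohomology class $[\phi_p]\in H^1(M,\C^n)$ has vanishing real part, and this vanishing is preserved by the homotopy $\Phi$. Fixing $x_0\in M$, the formula $u^t_p(x)=u_p(x_0)+\Re\int_{x_0}^x 2\,\Phi(p,t)$ then defines a continuous family $u^t:P\to\CMI_0(M,\R^n)$: the values are path-independent because the real periods of $\Phi(p,t)$ vanish, the immersion is nonflat because $\Phi(p,t)\in\Ascr^1_*$, and it has finite total curvature because $\Phi(p,t)$ is algebraic on $\overline M$ with no poles on $M$. By construction $u^0=u$ and $u^t_p=u_p$ for $p\in Q$, while the effective-pole condition $\Phi(p,1)\in\Ascr^1_\infty$ forces $u^1_p$ to be complete, so $u^1_p\in\CMI_*(M,\R^n)$. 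Applying this with $(P,Q)$ equal to $(\{\mathrm{pt}\},\varnothing)$ and to $(D^k,S^{k-1})$ for $k\ge 1$ yields claim (i). Claim (ii) then follows by 2-out-of-3 from the factorisation $\di|_{\CMI_*}=\di|_{\CMI_0}\circ\iota_0$, using that $\di:\CMI_*\to\Ascr^1_*$ is a weak homotopy equivalence (as noted above) and that $\iota_0$ now is too. The only real obstacle is the parametric deformation in (i), and this is exactly the content of Theorem \ref{th:flux2}; everything else reduces to routine 2-out-of-3 bookkeeping.
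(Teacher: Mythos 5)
Your proposal is correct, and most of it (bottom row via Propositions \ref{prop:poles} and \ref{prop:nondegenerate}, the arrows from $\Re\NC_*(M,\C^n)$ and $\CMI_*(M,\R^n)$ via Theorem \ref{th:whe1} plus two-out-of-three) coincides with the paper's argument. Where you diverge is in the order of the last two arrows: the paper first establishes that $\di:\CMI_0(M,\R^n)\to\Ascr^1_*(M,\boldA)$ is a weak homotopy equivalence directly, by re-running the proof of Theorem \ref{th:hp1} with the completeness hypotheses and conditions (iii)--(iv) dropped, and then obtains $\iota_0$ from the diagram (noting in passing that $\iota_0$ also follows from Theorem \ref{th:flux2}); you instead prove $\iota_0$ first, by the parametric deformation through $\CMI_0(M,\R^n)$ built from Theorem \ref{th:flux2} (constant prescribed cohomology class $\Fcal(p,t)=[\di u_p]$, effective poles at $t=1$, then integration of the real part), and deduce the third-column $\di$ by two-out-of-three. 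Your route is sound: the key points -- that $\di u_p$ lies in $\Ascr^1_\infty(M,\boldA)$ exactly when $u_p$ is complete, that preserving the class in $H^1(M,\C^n)$ preserves vanishing of the real periods so the primitives $u^t_p$ exist and stay in $\CMI_0(M,\R^n)$, and that the addendum of Theorem \ref{th:flux2} forces $\Phi(p,1)\in\Ascr^1_\infty(M,\boldA)$ -- are all used correctly, and you have in effect written out the one-line alternative justification that the paper only sketches. What the paper's route buys is independence of Theorem \ref{th:flux2} for the third column (only the h-principle machinery of Theorem \ref{th:hp1} is needed there); what yours buys is that the only nontrivial new input beyond Theorem \ref{th:whe1} is the single application of Theorem \ref{th:flux2}, with everything else reduced to two-out-of-three bookkeeping.
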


\begin{proof}
The inclusion $\iota$ is a weak homotopy equivalence by 
Theorem \ref{th:whe1}. Propositions \ref{prop:poles} and 
\ref{prop:nondegenerate} show that the inclusions in the bottom row 
are weak homotopy equivalences.
By Theorem \ref{th:whe1}, the map $\di$ from either $\Re \NC_*(M,\C^n)$
or $\CMI_*(M,\R^n)$ to $\Ascr^1(M,\boldA)$ is a weak homotopy equivalence.
It follows that the two left-hand side vertical maps $\di$ are weak homotopy
equivalences. (This was already pointed out below equation 
\eqref{eq:inclusions}.) The proof of Theorem \ref{th:hp1} 
(ignoring completeness of the given immersions $u_p$ for $p\in Q$ 
and ignoring conditions (iii) and (iv) in the theorem) shows that the map 
$\di$ in the third column is a weak homotopy equivalence as well. 
Finally, $\iota_0$ is a weak homotopy equivalence by 
Theorem \ref{th:flux2}, although this already follows from the diagram. 
\end{proof}

Assume now that 
$A\subset \C^n\setminus\{0\}$ is a smooth connected flexible cone 
\eqref{eq:A} not contained in any hyperplane. 
Denote by $\Iscr_*(M,A)$ the space of nondegenerate proper algebraic 
$A$-immersions $M\to\C^n$ and by $\Iscr_0(M,A)$ the bigger space of 
nondegenerate algebraic $A$-immersions $M\to\C^n$, 
including the nonproper ones. Arguing as above, 
we obtain the following corollary.

\begin{corollary}
Under the above assumptions, the maps in the diagram
\[
\xymatrix{
	\Iscr_*(M,A)  \ar@{^{(}->}[r] \ar[d]_\di  
					&  \Iscr_0(M,A) \ar[d]^\di 
					  \\ 
	 \Ascr^1_\infty(M,A) \ar@{^{(}->}[r]  &   \Ascr^1_*(M,A) 
}
\]
are weak homotopy equivalences.
\end{corollary}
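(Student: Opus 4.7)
The plan is to mirror the argument used for Corollary \ref{cor:whe-extended} in the minimal-surface setting, exploiting the two-out-of-three property for weak homotopy equivalences in the commutative square. I will establish three of the four maps directly and obtain the fourth automatically. As always, in a commutative square where three of the four maps are weak homotopy equivalences, the fourth is as well.

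First, the bottom inclusion $\Ascr^1_\infty(M,A) \hookrightarrow \Ascr^1_*(M,A)$ is a weak homotopy equivalence by Proposition \ref{prop:poles}. For the left vertical map $\di : \Iscr_*(M,A) \to \Ascr^1_\infty(M,A)$, I would postcompose with the further inclusions $\Ascr^1_\infty(M,A) \hookrightarrow \Ascr^1_*(M,A) \hookrightarrow \Ascr^1(M,A)$ to obtain the map $h \mapsto dh$ from $\Iscr_*(M,A)$ to $\Ascr^1(M,A)$, which is a weak homotopy equivalence by Theorem \ref{th:whe2}. Since both inclusions in this chain are themselves weak homotopy equivalences (Propositions \ref{prop:poles} and \ref{prop:nondegenerate}), two successive applications of two-out-of-three show that $\di : \Iscr_*(M,A) \to \Ascr^1_\infty(M,A)$ is a weak homotopy equivalence.

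For the right vertical map $\di : \Iscr_0(M,A) \to \Ascr^1_*(M,A)$, I would prove an analogue of the h-principle of Theorem \ref{th:hp1} for the cone $A$ in place of $\boldA$, with completeness and the pole-at-ends conditions dropped. The argument runs parallel to the proof of Theorem \ref{th:hp1}: given a continuous family $\phi : P \to \Ascr^1_*(M,A)$ together with a compatible family of $A$-immersions over a closed subspace $Q \subset P$, one sets up a period-dominating spray over a Runge neighbourhood of a suitable compact $L \subset M$, uses Theorem \ref{th:FL19} (valid for any Oka cone, in particular any flexible cone) to produce a homotopy adjusting the cohomology class of $\phi_p$ as required, and invokes Corollary \ref{cor:flexible} to approximate the resulting family of holomorphic sections of $\Acal \to M$ by a family of algebraic sections. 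The device of extending the spray over neighbourhoods $W_j$ of auxiliary points $x'_j$ near the ends of $M$, which in the proof of Theorem \ref{th:hp1} forces effective poles at every end, is here simply omitted. Openness of nondegeneracy (Proposition \ref{prop:nondegenerate}) ensures that sufficiently close approximants stay in $\Ascr^1_*(M,A)$, so the homotopy remains in the desired space. By the standard $\pi_k$-argument used in the proof of Theorem \ref{th:whe1}, this h-principle implies that the right vertical map is a weak homotopy equivalence. The top inclusion $\Iscr_*(M,A) \hookrightarrow \Iscr_0(M,A)$ then follows automatically by two-out-of-three applied to the commutative square.

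The main obstacle is the adaptation of the h-principle proof for the right vertical map. The paper already notes that ``essentially the same proof'' of Theorem \ref{th:hp1} yields Theorem \ref{th:whe2} for any flexible cone $A$, so the new ingredient is not the cone generalisation but the removal of the pole-at-infinity decoration. The step requiring care is verifying that omitting the $W_j$-extension does not break period domination or the approximation step, and that nondegeneracy is preserved at every stage of the homotopy; both points follow from the openness of $\Ascr^1_*(M,A)$ in $\Ascr^1(M,A)$ together with the implicit function theorem applied to the period map, exactly as in the original proof.
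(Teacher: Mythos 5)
Your proposal is correct and mirrors the paper's own approach, which is simply to adapt the argument of Corollary \ref{cor:whe-extended} to the present setting (the paper states only ``Arguing as above''). You have correctly unpacked the implicit argument: Proposition \ref{prop:poles} for the bottom inclusion, Theorem \ref{th:whe2} combined with Propositions \ref{prop:poles} and \ref{prop:nondegenerate} and two-out-of-three for the left vertical map, the $\boldA$-free analogue of Theorem \ref{th:hp1} with completeness and pole conditions dropped for the right vertical map, and two-out-of-three for the top inclusion.
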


%
%
\section{Algebraic homotopies and algebraically elliptic cones}
\label{sec:A-homotopies}

\noindent
As mentioned in the introduction, the recent 
\cite[Theorem 1.3]{ArzhantsevKalimanZaidenberg2024} provides a good 
sufficient condition for the punctured cone $A\subset\C_*^n$, $n\geq 2$, on a 
connected submanifold $Y$ of $\P^{n-1}$ with $\dim Y\geq 1$ to be 
algebraically elliptic, namely that $Y$ is uniformly rational.  By contrast, the 
collection of known flexible cones is very small and consists of a few classes of 
examples (luckily, the null quadric $\boldA$ is among them). 
This is no surprise: it is more difficult to construct algebraic flows than 
dominating families of algebraic sprays.  
The main classes of examples consist of the cones $A$ 
that are defined by a flag manifold or a toric manifold 
$Y$ \cite{ArzhantsevZaidenbergKuymzhiyan2012}. 
Other examples are given in \cite{Perepechko2013,MichalekPerepechkoSuss2018,ProkhorovZaidenberg2023}.

In view of this, it is of interest to explore to what extent the flexibility assumption 
on the cone $A$ in Theorem \ref{th:whe2} can be relaxed to $A$ being
algebraically elliptic. 
We are unable to prove Theorem \ref{th:whe2} under this weaker assumption,
but partial results can be obtained. First, Theorem \ref{th:AAJI} 
(or \cite[Theorem 1.1]{AlarconLarusson2023regular}) shows that for every 
algebraically elliptic connected smooth cone $A\subset\C_*^n$, $n\geq 2$, 
which is not contained
in any hyperplane, the map $\Iscr_*(M,A) \to \Ascr^1(M,A)$, 
$h\mapsto dh$, in \eqref{eq:inclusion}
induces a surjection of path components of the two spaces.
The next important question is whether this map is also injective
on path components. We provide a partial answer 
by introducing the following notion of algebraic homotopy.

%
%
\begin{definition}\label{def:a-homotopy}
Let $\pi:\Acal \to M$ be an algebraic map onto an affine algebraic
variety $M$. Algebraic sections $\phi_0, \phi_1:M\to \Acal$ of $\pi$ 
are {\em algebraically homotopic}, abbreviated {\em a-homotopic}, 
if there is an algebraic map $\phi:M\times \C\to \Acal$ 
such that $\phi(\cdotp,t):M\to\Acal$ is a section of $\pi$
for every $t\in \C$ and $\phi(\cdot, t)=\phi_t$ for $t=0,1$.
\end{definition}

Using this new notion, we have the following result, 
asserting that a pair of proper nondegenerate $A$-immersions
with a-homotopic differentials lie in the same connected  
component of the space $\Iscr_*(M,A)$. 

\begin{theorem} \label{th:a-homotopy}
Let $M$ be an affine Riemann surface, $A\subset\C_*^n$, $n\ge 2$, 
be a smooth connected algebraically elliptic punctured cone 
not contained in any hyperplane, 
$\Acal\subset (T^*M)^{\oplus n}$ be the algebraic fibre bundle over $M$ 
with fibre $A$, and $h_0, h_1:M\to\C^n$ be proper nondegenerate  
algebraic $A$-immersions. If the differentials $dh_0$ and $dh_1$ are 
a-homotopic sections of $\Acal$,
then there is a homotopy of proper nondegenerate algebraic $A$-immersions 
$h_t:M \to \C^n$, $t\in [0,1]$, joining $h_0$ and $h_1$.
\end{theorem}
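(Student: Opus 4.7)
The strategy is to integrate the algebraic homotopy $\phi(\cdotp,t)$ of sections of $\Acal$ to a continuous family $h_t$ of proper nondegenerate algebraic $A$-immersions with $h_0,h_1$ as endpoints by correcting its periods to vanish and forcing effective poles at every end of $M$. The main tools are the period-dominating spray construction \cite[Lemma 3.2.1]{AlarconForstnericLopez2021}, the pole-extension device from the proofs of Theorems~\ref{th:AAJI} and~\ref{th:flux}, the algebraic approximation Theorem~\ref{th:ahRunge1}, and the implicit function theorem. The crucial input is that the family $\phi_t:=\phi(\cdotp,t)$ is already algebraic for each $t\in\mathbb{C}$, so after correction the resulting family lies in $\Iscr_*(M,A)$ for every $t\in[0,1]$.

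\textbf{Construction.} Fix a base point $q\in M$, a homology basis $\Cscr=\{C_1,\dots,C_s\}$ of $H_1(M,\Z)$ at $q$ contained in a compact Runge set $L\subset M$ whose complement is a union of punctured discs $D_j\setminus\{x_j\}$ at the ends $E=\{x_1,\dots,x_m\}$, and write $\Pcal$ for the associated period map. Since $h_0,h_1$ are globally defined, $\Pcal(\phi_0)=\Pcal(\phi_1)=0$. After a small algebraic perturbation of $\phi$ if necessary (using openness of nondegeneracy and that $\phi_0,\phi_1$ are nondegenerate), assume that $\phi_t$ is nondegenerate for every $t\in[0,1]$. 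On a Runge neighbourhood $V\supset L$, construct a holomorphic period-dominating spray $\phi_{\zeta,t}:V\to\Acal|_V$, $\zeta\in B\subset\C^N$ a ball about $0$, with $\phi_{0,t}=\phi_t|_V$, via \cite[Lemma 3.2.1]{AlarconForstnericLopez2021}. For auxiliary points $x'_j\in D_j\setminus\{x_j\}$ close to the ends and disjoint neighbourhoods $W_j\Subset D_j\setminus\{x_j\}$, extend the spray to $W=\bigcup W_j$ by $\phi_{\zeta,t}|_{W_j}=\xi_j(t)\phi_t|_{W_j}$, where $\xi_j:[0,1]\to[1,\infty)$ is smooth with $\xi_j(0)=\xi_j(1)=1$ and $\xi_j(t)$ large enough on $(0,1)$ to force $|\phi_{\zeta,t}(x'_j)/\theta_0(x'_j)|$ to exceed the maximum of $|\phi_{\zeta,t}/\theta_0|$ on $\partial D_j$ for all $\zeta$ in a compact subball $B'\subset B$ (here $\theta_0$ is a nowhere vanishing holomorphic $1$-form near $\overline{D_j}$). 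Because $|\Cscr|\subset V$ is disjoint from $W$, the periods are unaffected by this extension.

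\textbf{Approximation and period correction.} Apply Theorem~\ref{th:ahRunge1} with condition~(c) at $(\zeta,t)=(0,0)$ and algebraic initial section $f=\phi_0$ to approximate the extended spray uniformly on $L'\times B'\times[0,1]$, where $L'=L\cup\{x'_1,\dots,x'_m\}$, by an algebraic spray $\tilde\phi_{\zeta,t}:M\to\Acal$ with $\tilde\phi_{0,0}=dh_0$. For close enough approximation the maximum principle on each $D_j$ gives $\tilde\phi_{\zeta,t}$ an effective pole at $x_j$ for every $(\zeta,t)\in B'\times[0,1]$; period domination persists; and nondegeneracy of $\phi_t$ is inherited by $\tilde\phi_{\zeta,t}$ for $\zeta$ near $0$. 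By the implicit function theorem applied uniformly in $t$, there is a continuous $\zeta:[0,1]\to B'$ with $\zeta(0)=0$ and $\Pcal(\tilde\phi_{\zeta(t),t})=0$ for every $t$, with $\zeta(1)$ close to but in general not equal to $0$. Integrating $\tilde\phi_{\zeta(t),t}$ with a continuously varying base value at $q$ produces a continuous family $\tilde h_t:M\to\C^n$ of proper nondegenerate algebraic $A$-immersions with $\tilde h_0=h_0$ and $d\tilde h_1$ close to $dh_1$ on $L'$.

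\textbf{Main obstacle: endpoint matching.} The principal difficulty is that Theorem~\ref{th:ahRunge1} interpolates at a single endpoint of the parameter interval $[0,1]$: the construction above yields $\tilde h_0=h_0$ exactly but $\tilde h_1$ only close to $h_1$. To close the gap, we apply the same period-correction and pole-forcing machinery a second time in a small neighbourhood of $dh_1$ in $\Ascr^1_\infty(M,A)$, this time using Theorem~\ref{th:ahRunge1}(c) to interpolate at the other endpoint: since $d\tilde h_1$ and $dh_1$ are close algebraic $A$-valued $1$-forms sharing vanishing periods, effective poles at every end, and nondegeneracy, a short secondary homotopy connects $\tilde h_1$ to $h_1$ through $\Iscr_*(M,A)$, and concatenation with $\tilde h_t$ (after reparametrization) yields the desired isotopy from $h_0$ to $h_1$. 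A subsidiary difficulty, requiring care but not presenting a fundamental obstruction, is that without flexibility Proposition~\ref{prop:nondegenerate} does not apply, so nondegeneracy for intermediate $t$ is maintained only through the openness of the condition and the continuity of the construction, together with the initial perturbation of $\phi$ to nondegenerate form.
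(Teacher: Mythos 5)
You correctly spotted the central obstacle — that the naive application of Theorem~\ref{th:ahRunge1} interpolates at only one endpoint of the parameter interval, giving $\tilde h_0=h_0$ exactly but $\tilde h_1$ only close to $h_1$ — but your proposed fix does not close the gap. Concatenating with a ``short secondary homotopy'' from $\tilde h_1$ to $h_1$ through $\Iscr_*(M,A)$ implicitly assumes that $\Iscr_*(M,A)$ (equivalently, $\Ascr^1_\infty(M,A)$) is locally path-connected in the compact-open topology. That is precisely the open problem isolated in Section~\ref{sec:topological}: Theorem~\ref{th:locally-contractible-2} is \emph{conditional} on local contractibility of $\Ascr^1(M,A)$, and the paper explicitly states it does not know how to prove (or disprove) this for general targets. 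Two $C^0$-close algebraic $A$-valued $1$-forms need not be joined by a path of such forms, so the concatenation step is unjustified, and with it the whole proof collapses.

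The paper's resolution is a genuinely different device: treat the complex algebraic parameter $p$ as an \emph{extra coordinate of the base variety}, so the algebraic approximation is carried out over $X=M\times\C$. First one chooses a real-analytic arc $P\subset\C$ joining $0$ to $1$ that avoids the finitely many $p\in\C$ at which $\phi_p$ is degenerate (no perturbation of $\phi$ is needed — and a perturbation would in any case risk destroying $\phi_0=dh_0$, $\phi_1=dh_1$). One then uses \cite[Theorem 5.3]{ForstnericLarusson2019CAG} to kill the periods, builds the period-dominating spray, and applies Proposition~\ref{prop:Mergelyan} to make the spray depend \emph{holomorphically} on $p$ in a neighbourhood $U\subset\C$ of $P$. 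Now part (d) of Theorem~\ref{th:ahRunge1} is invoked with the closed algebraic subvariety $X_0=M\times\{0,1\}\subset M\times\C$: this forces the algebraic approximant to agree with the given algebraic section on $X_0$, hence with $\phi_p$ at \emph{both} $p=0$ and $p=1$ simultaneously. The implicit function theorem then yields a period correction $\zeta(p)$ with $\zeta(0)=\zeta(1)=0$.

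A secondary but real gap in your argument: you invoke the implicit function theorem to produce $\zeta(t)\in B'$ with $\Pcal(\tilde\phi_{\zeta(t),t})=0$ for all $t\in[0,1]$, but the IFT only solves this inside the small ball $B'$ when $\Pcal(\tilde\phi_{0,t})\approx\Pcal(\phi_t)$ is already small. You only know $\Pcal(\phi_0)=\Pcal(\phi_1)=0$; for intermediate $t$ the periods of $\phi_t$ are arbitrary, so the correction may leave $B'$. The paper avoids this by first deforming (via \cite[Theorem 5.3]{ForstnericLarusson2019CAG}) the whole family $\phi_p$ to a family with vanishing periods, rel $\{0,1\}$, \emph{before} any algebraic approximation, so that only a small spray correction is needed afterwards.
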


\begin{proof}
The assumptions imply that there is an algebraic map 
$\phi:M\times \C \to \Acal$ such that 
$\phi_p=\phi(\cdotp,p):M\to\Acal$ is a section for every $p\in \C$
and $\phi_p=dh_p$ for $p=0,1$. Since $h_0$ and $h_1$ are nondegenerate,  
$\phi_p$ is nondegenerate for all but finitely many values of 
$p\in\C$. Hence, there is an embedded
real analytic arc $P \subset \C$ connecting $0$ and $1$ such that
$\phi_p$ is nondegenerate for every $p\in P$. 
Let $Q=\{0,1\}\subset P$ and let $\theta$ be a nowhere vanishing holomorphic 
1-form on $M$. By \cite[Theorem 5.3]{ForstnericLarusson2019CAG},
there is a homotopy $\phi_p^t=f_p^t \theta$ $(p\in P,\ t\in [0,1])$ 
of nondegenerate holomorphic sections of the bundle $\Acal\to M$ 
(that is, $f_p^t:M\to A$ is a nondegenerate holomorphic map 
for each $(p,t)$) such that
\begin{enumerate}[\rm (i)] 
\item $\phi_p^t=\phi_p$ for every $(p,t)\in (P\times \{0\})\cup (Q\times [0,1])$,
and
\smallskip
\item $\phi_p^1$ has vanishing periods on all closed curves in $M$
for every $p\in P$.
\end{enumerate}
Choose a basis $\Cscr=\{C_1,\ldots,C_s\}$ of $H_1(M,\Z)$
and a compact set $L\subset M$
containing $|\Cscr|=\bigcup_{i=1}^s C_i$ as in the proof of
Theorem \ref{th:flux}, so that $M\setminus L$ is a union 
of pairwise disjoint punctured discs around the ends of $M$.
By \cite[Lemma 3.2.1]{AlarconForstnericLopez2021}, 
we can find a $\Cscr$-period dominating spray of nondegenerate
holomorphic sections $\phi_{\zeta,p}^t  : V\to \Acal|_V$ over
a neighbourhood $V\subset M$ of $L$, depending holomorphically
on $\zeta\in B\subset\C^N$, where $B$ is a ball around the origin in $\C^N$,
and continuously on the parameters $(p,t)\in P\times [0,1]$, 
such that $\phi_{0,p}^t=\phi_{p}^t$ for all $(p,t)\in P\times [0,1]$.

Choose a smaller ball $B'\Subset B$ around the origin.
By Proposition \ref{prop:Mergelyan}, applied to the extended bundle
$Z=\Acal\times\C^N\to X=M\times \C^N$, 
we can approximate $\phi_{\zeta,p}^t$ uniformly on 
$L\times B' \times P\times [0,1]$ by a homotopy
$\wt \phi_{\zeta,p}^t$ of nondegenerate holomorphic sections of 
the bundle $\Acal\to M$ which depends holomorphically on 
$(x,\zeta,p) \in V\times B' \times U$,
where $U\subset \C$ is a neighbourhood of the arc $P$ and $V\supset L$
is a possibly smaller neighbourhood of $L$, such that
$\wt \phi_{0,p}^t=\phi_p^t$ holds for $p=0,1$ and for all $t\in[0,1]$.
Thus, we may view $\wt \phi_{0,\cdotp}^t$ as a homotopy of holomorphic
sections of the fibre bundle $\Acal\times\C\to M\times \C$ 
over the domain $V\times U\subset M\times \C$, which is independent 
of $t\in [0,1]$ over the algebraic submanifold 
$M\times \{0,1\}\subset M\times \C$ intersected with $V\times U$.
After slightly shrinking the ball $B'\subset\C^N$,
Theorem \ref{th:ahRunge1} (see in particular part (d)) 
allows us to approximate the family $\wt \phi_{\zeta,p}^1$ 
uniformly on $L\times B'\times P$ by an algebraic map 
$\psi:M\times \C^N \times \C  \to \Acal$
such that the period dominating spray of algebraic sections 
\[
	\psi_{\zeta,p}=\psi(\cdotp,\zeta,p): M\to\Acal
	\ \ \text{for $p\in P$ and $\zeta\in\C^N$}
\]
agrees with $\phi_p$ for $p=0,1$ and $\zeta=0$.
If the approximation is close enough, the implicit function
theorem gives a smooth map $P\ni p\mapsto \zeta(p)\in B'$
with values close to $0$ such that $\zeta(p)=0$ for $p=0,1$
and the algebraic 1-form $\psi_{\zeta(p),p}$ has vanishing periods
for every $p\in P$. By integrating these forms
with suitable initial values at a point $q\in |\Cscr|$ 
we get a homotopy of nondegenerate algebraic $A$-immersions 
$h_p:M\to \C^n$ $(p\in P)$ connecting $h_0$ and $h_1$.

If in addition $h_0$ and $h_1$ are proper, then the same device 
that was used in the proof of Theorem \ref{th:hp1} can also be used
in this proof to ensure that all $A$-immersions $h_p$ $(p\in P)$ 
in the above family are proper as well. 
\end{proof}

\begin{remark}
The proof applies in the more general case when the given pair of 
immersions in the theorem need not be proper, but then
the intermediate immersions in the homotopy furnished by the theorem need 
not be proper either. Nevertheless, arguing as at the end of 
Section \ref{sec:flux}, we can ensure that the intermediate immersions 
are proper for all parameter values $t\in[c,1-c]$ for any given $c\in(0,\tfrac 1 2)$.
\end{remark}

%
%
%
%
\section{Local contractibility of algebraic mapping spaces}\label{sec:topological}

\noindent
A topological space $X$ is said to be locally contractible if for every point $x\in X$ and every neighbourhood $U$ of $x$, there is a neighbourhood $V\subset U$ of $x$ such that the inclusion $V\hookrightarrow U$ is homotopic to a constant map.  CW complexes and absolute neighbourhood retracts are locally contractible, as are, obviously, locally convex topological vector spaces.
Corollary \ref{cor:flexible} would allow us to determine the weak homotopy type of many algebraic mapping spaces if we knew that they were locally contractible in the compact-open topology.

\begin{theorem}  \label{th:locally-contractible}
Let $X$ be an affine algebraic variety and $Y$ be a flexible algebraic manifold.  Suppose that the space $\mathscr A(X, Y)$ of algebraic maps $X\to Y$ is locally contractible with respect to the compact-open topology.  Then the inclusion $\mathscr A(X, Y) \hookrightarrow \mathscr O(X, Y)$ induces an injection of path components and an isomorphism of homotopy groups in every degree.
\end{theorem}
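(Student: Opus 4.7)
The plan is to combine the parametric algebraic approximation of Corollary~\ref{cor:flexible}, applied to the trivial algebraic fibre bundle $X\times Y\to X$ with flexible fibre $Y$, with the local contractibility of $\mathscr A(X,Y)$, so as to bridge from ``close in the compact-open topology'' to ``in the same homotopy class''. The statement splits as injection on $\pi_0$ and isomorphism on $\pi_k$ for $k\geq 1$; I would prove it by induction on degree. The key extract from Corollary~\ref{cor:flexible}, which I would record as a lemma, reads as follows. Given a compact Hausdorff space $P$, a closed subspace $Q\subset P$, a continuous algebraic family $g:P\to \mathscr A(X,Y)$, and a continuous family of holomorphic paths $h:P\times[0,1]\to \mathscr O(X,Y)$ with $h(\cdot,0)=g$ and $h(p,t)=g(p)$ on $Q\times[0,1]$, one obtains a continuous algebraic family $\tilde h:P\times[0,1]\to \mathscr A(X,Y)$ with $\tilde h(\cdot,0)=g$, $\tilde h(p,t)=g(p)$ on $Q\times[0,1]$, and $\tilde h$ approximating $h$ uniformly on any prescribed compact subset of $X$, uniformly in $(p,t)$.

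For injection on $\pi_0$, given algebraic $f_0,f_1$ joined by $h:[0,1]\to \mathscr O(X,Y)$, I apply the lemma with $P=[0,1]$, $Q=\{0\}$, $g\equiv f_0$, and interpolation $(p,t)\mapsto h(pt)$, to obtain an algebraic path $\gamma(p):=\tilde h(p,1)$ with $\gamma(0)=f_0$ and $\gamma(1)$ in any prescribed neighbourhood of $f_1$. Local contractibility of $\mathscr A(X,Y)$ at $f_1$ then yields an algebraic path from $\gamma(1)$ to $f_1$, and concatenation produces a path in $\mathscr A(X,Y)$ from $f_0$ to $f_1$.

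For the inductive step $k\geq 1$, assume injection on $\pi_j$ for $0\leq j\leq k-1$. Injection on $\pi_k$ based at $f\in \mathscr A(X,Y)$ follows by the same pattern: given $\alpha:(S^k,*)\to(\mathscr A(X,Y),f)$ with null-homotopy $H:(D^{k+1},0)\to(\mathscr O(X,Y),f)$, apply the lemma with $P=D^{k+1}$, $Q=\{0\}$, $g\equiv f$, and interpolation $(p,t)\mapsto H(tp)$, to produce $\tilde H:D^{k+1}\to \mathscr A(X,Y)$ with $\tilde H(0)=f$ and $\tilde H|_{S^k}$ close to $\alpha$ in the compact-open topology; a uniform form of local contractibility over the compact image $\alpha(S^k)$ provides a homotopy in $\mathscr A(X,Y)$ from $\tilde H|_{S^k}$ to $\alpha$, so that $\alpha$ is null-homotopic in $\mathscr A(X,Y)$. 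Surjection on $\pi_k$ I would obtain by triangulating $S^k$ finely and building an algebraic representative $\alpha$ simplex by simplex: the extension of $\alpha|_{\partial\sigma}$ into each $(j+1)$-cell $\sigma$ uses the inductive injection on $\pi_j$ (after observing that $\alpha|_{\partial\sigma}$ is close to $\beta|_{\partial\sigma}$ which bounds $\beta|_\sigma$, and that $\mathscr O(X,Y)$ is locally contractible --- an ANR by Oka-theoretic results --- so that $\alpha|_{\partial\sigma}$ is null-homotopic in $\mathscr O$ and hence in $\mathscr A$ by induction), followed by a further application of the lemma to keep the extension close to $\beta|_\sigma$.

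The principal obstacle is the bridging from ``close in the compact-open topology'' to an actual homotopy in $\mathscr A(X,Y)$: local contractibility as stated is pointwise, yet the bridging needs a uniform version over compact images of the parameter space. This uniformity is extracted from pointwise local contractibility by a standard finite-covering plus Lebesgue-number argument, valid for metrizable locally contractible spaces; here $\mathscr A(X,Y)$ with the compact-open topology is metrizable since $X$ is $\sigma$-compact and second-countable and $Y$ is metrizable. The skeletal induction used for surjection is the most intricate piece, as it interleaves the approximation lemma with the bridging at each dimension and each simplex, and the inductive injection hypothesis must be carefully shepherded through the construction.
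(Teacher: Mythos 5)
Your plan correctly identifies the two ingredients — Corollary~\ref{cor:flexible} applied to the trivial bundle $X\times Y\to X$, and local contractibility of $\mathscr A(X,Y)$ — but the way you set up the parametrization creates a genuine gap at the bridging step. The paper proves the theorem by a single relative argument: take $f:B\to\mathscr O(X,Y)$ continuous with $f(S)\subset\mathscr A(X,Y)$, where $B$ is the closed $k$-ball with boundary sphere $S$, and deform $f$ rel $S$ into $\mathscr A(X,Y)$. The essential device is to view $B$ as the cone $S\times[0,1]/(S\times\{1\})$ and apply Corollary~\ref{cor:flexible} with $P=S$ and $Q=\varnothing$. The algebraic initial data then lives on $S\times\{0\}$, where the algebraic approximant $G$ agrees \emph{exactly} with the given boundary sphere, while the approximation error appears only on $S\times\{1\}$, where the target map is \emph{constant}. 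Local contractibility at that single point $f(o)$ then contracts $G|_{S\times\{1\}}$ to $f(o)$ inside a small neighbourhood, and one glues. You reverse this: you take $P=D^{k+1}$, $Q=\{0\}$, $g\equiv f$, and the radial scan $(p,t)\mapsto H(tp)$, so your approximant agrees exactly with the constant at the cone point and at $t=0$, but the approximation error appears at $t=1$ along $S^k$, where $\tilde H|_{S^k}$ is only \emph{close to} — not equal to — the non-constant sphere $\alpha$.

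To close that gap you invoke a ``uniform local contractibility over the compact image $\alpha(S^k)$'' supposedly extracted from pointwise local contractibility by a finite-cover plus Lebesgue-number argument. This deduction does not hold. Local contractibility of a metrizable space does \emph{not} imply that two sufficiently uniformly close maps from a compact domain are homotopic: that would essentially say the space is an ANR (or at least LEC), and local contractibility is strictly weaker than ANR for metrizable spaces — there are even compact metric locally contractible spaces that are not ANRs (Borsuk). The whole point of the hypothesis in the theorem is that $\mathscr A(X,Y)$ is only known to be locally contractible, not an ANR (the ANR status of $\mathscr O(X,Y)$ is used, by contrast, at the very end, to promote an approximation in $\mathscr O$ to a homotopy in $\mathscr O$). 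So your bridging step for $\pi_k$-injectivity, $k\geq 1$, is a real gap; the fix is to reparametrize as above so that the bridging happens near a constant map. Your $\pi_0$ argument is fine — there the bridging is just local path-connectedness at $f_1$, which \emph{is} implied by local contractibility at $f_1$. Finally, the simplex-by-simplex induction you sketch for surjectivity on $\pi_k$ is unnecessary (and would also run into the same bridging problem on cell boundaries): the relative deformation argument above yields $\pi_k(\mathscr O,\mathscr A)=0$ for all $k\geq 1$, hence simultaneously injectivity on $\pi_{k-1}$ and surjectivity on $\pi_k$, once a preliminary step (density of $\mathscr A$ in path components it meets, from \cite[Theorem~3.3]{Forstneric2006AJM}, plus local path-connectedness of $\mathscr O$) lets one arrange the cone-point value $f(o)$ to lie in $\mathscr A(X,Y)$.
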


\begin{remark}
(a)  If $Y$ is the prototypical flexible algebraic manifold $\C^n$, $n\geq 1$, then, as a topological vector space, $\mathscr A(X, Y)$ is locally convex and hence locally contractible.  We do not at present know how to prove or disprove local contractibility for more general manifolds.

(b)  If $X$ is an affine algebraic manifold and $Y$ is a flexible algebraic manifold, the inclusion $\mathscr A(X, Y) \hookrightarrow \mathscr O(X, Y)$ may or may not induce a surjection of path components.  For example, let $\Sigma^n$ denote the complex $n$-sphere $\{(z_0,\ldots,z_n)\in\C^{n+1}:z_0^2+\cdots+z_n^2=1\}$.  As a smooth affine algebraic variety, $\Sigma^n$ is flexible for all $n\geq 2$ (see \cite[Proposition 1.15.3]{AlarconForstnericLopez2021}).  Loday \cite{Loday1973} 
showed that for $p$ and $q$ odd, every algebraic map $\Sigma^p\times\Sigma^q \to\Sigma^{p+q}$ is null-homotopic, whereas there are non-null-homotopic continuous maps $\Sigma^p\times\Sigma^q \to\Sigma^{p+q}$ (see also \cite[Example 6.15.7]{Forstneric2017E}).  On the other hand, it follows from the results in \cite{AlarconLarusson2023regular} that $\mathscr A(X, Y) \hookrightarrow \mathscr O(X, Y)$ does induce a surjection of path components if $X$ is an affine Riemann surface and $Y$ belongs to a class of punctured cones that includes the large cone $\C_*^n$, $n\geq 2$, and the null quadric $\boldA\subset \C_*^n$, $n\geq 3$.

(c)  Under the assumptions of the theorem, $\mathscr O(X, Y)$ and $\mathscr C(X, Y)$ are absolute neighbourhood retracts and hence locally contractible \cite[Proposition 7 and Theorem 9]{Larusson2015PAMS}.  Also, 
$X$ is Stein and $Y$ is algebraically elliptic and hence Oka, so the inclusion $\mathscr O(X, Y) \hookrightarrow \mathscr C(X, Y)$ is a weak homotopy equivalence.
\end{remark}

\begin{proof}[Proof of Theorem \ref{th:locally-contractible}]
We shall prove, at the same time, that the inclusion 
$\mathscr A(X, Y) \hookrightarrow \mathscr O(X, Y)$ induces a 
monomorphism at the level of $\pi_{k-1}$ and an epimorphism 
at the level of $\pi_k$ for all $k\geq 2$.  

Let $B$ be the closed unit ball in $\mathbb R^k$ with boundary sphere $S$.  
Choose a base point $b\in S$.  Let $f:B\to\mathscr O(X,Y)$ be continuous with 
$f(S)\subset\mathscr A(X,Y)$.  What is required is to deform $f$, keeping $f(b)$ 
fixed and keeping $f(S)$ in $\mathscr A(X,Y)$, until all of $f(B)$ lies in 
$\mathscr A(X,Y)$.  We will in fact prove a bit more.

View $B$ as $S\times[0,1]$ with $S\times\{1\}$ identified to the origin $o$ and view $f$ as a continuous map $g:S\times[0,1]\to\mathscr O(X,Y)$, constant on $S\times\{1\}$ and taking $S\times\{0\}$ into $\mathscr A(X,Y)$.  By the algebraic homotopy approximation theorem (the original version \cite[Theorem 3.3]{Forstneric2006AJM} suffices), $\mathscr A(X,Y)$ is dense in any path component of $\mathscr O(X,Y)$ that it intersects.  Also, $\mathscr O(X,Y)$, being an absolute neighbourhood retract, is locally path connected.  Hence, $f(o)\in\mathscr O(X,Y)$ can be deformed to an algebraic map (in fact by an arbitrarily small deformation), so we may assume that $f(o)\in\mathscr A(X,Y)$.

By Corollary \ref{cor:flexible}, 
applied with the parameter spaces $P=S$ and $Q=\varnothing$,
$g$ can be approximated on $S\times[0,1]$ by continuous maps $G:S\times\C\to \mathscr A(X, Y)$ with $G=g$ on $S\times\{0\}$.  Since $\mathscr A(X,Y)$ is locally contractible by assumption, if the approximation is close enough, $G\vert_{S\times\{1\}}$ is homotopic to the constant map with value $f(o)$ through maps with values in an arbitrarily small neighbourhood of $f(o)$ in $\mathscr A(X,Y)$.  

This shows that $f$ may be approximated arbitrarily closely by maps 
$F:B\to\mathscr A(X,Y)$ that equal $f$ on $S$.  Since, again, $\mathscr O(X,Y)$ 
is an absolute neighbourhood retract, if the approximation is close enough, 
then $f$ and $F$ are homotopic as maps $B\to\mathscr O(X,Y)$ by a 
homotopy that is constant on $S$ \cite[Theorem IV.1.2]{Hu1965}.
\end{proof}

Now let $M$ be an affine Riemann surface and $A\subset\C_*^n$, $n\geq 2$, be a connected flexible punctured cone, not contained in a hyperplane, for example the large cone $\C_*^n$ itself or the null quadric $\boldA\subset \C_*^n$, $n\geq 3$.  Let $\mathcal A$ be the algebraic fibre bundle with fibre $A$ over $M$ whose sections are $A$-valued 1-forms.  As before, we write $\mathscr A^1(M,A)$ for the space of algebraic sections of $\mathcal A$ over $M$ and $\mathscr O^1(M,A)$ for the space of holomorphic sections, both endowed with the compact-open topology.

\begin{theorem}  \label{th:locally-contractible-2}
With assumptions as above, suppose that the space $\mathscr A^1(M, A)$ is locally contractible.  Then the inclusion $\mathscr A^1(M, A) \hookrightarrow \mathscr O^1(M, A)$ induces an injection of path components and an isomorphism of homotopy groups in every degree.  If $A=\C_*^n$ or $A=\boldA$, then the inclusion is a weak homotopy equivalence.
\end{theorem}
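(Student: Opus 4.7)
The plan is to adapt the proof of Theorem \ref{th:locally-contractible} to sections of the algebraic fibre bundle $\mathcal A\to M$, using Corollary \ref{cor:flexible} in place of the mapping-space approximation. Since the cone $A$ is flexible, the fibrewise LNDs on $A$ can be extended, by multiplication with algebraic functions vanishing to sufficiently high order on the complement of trivialising bundle charts, to LNDs on the total space $\mathcal A$ tangent to the fibres of $\mathcal A\to M$; exactly as in the proof of Corollary \ref{cor:flexible}, this makes $\mathcal A\to M$ a flexible algebraic submersion, so Corollary \ref{cor:flexible} applies directly to its sections.

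Fix $k\geq 1$, let $B=B^k$ be the closed unit ball in $\R^k$ with boundary $\partial B=S^{k-1}$, and consider a continuous map $f\colon B\to \mathscr O^1(M,A)$ with $f(\partial B)\subset \mathscr A^1(M,A)$. I would prove, exactly as in the proof of Theorem \ref{th:locally-contractible}, that $f$ is homotopic rel $\partial B$ to a map $F\colon B\to \mathscr A^1(M,A)$. Realise $B$ as the cone $\partial B\times[0,1]/(\partial B\times\{1\}\sim o)$, so that $f$ corresponds to a continuous map $g\colon \partial B\times[0,1]\to\mathscr O^1(M,A)$ constant on $\partial B\times\{1\}$ with value $f(o)$. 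The non-parametric case of Corollary \ref{cor:flexible} yields density of $\mathscr A^1(M,A)$ in each of its path components of $\mathscr O^1(M,A)$; combined with local path-connectedness of $\mathscr O^1(M,A)$ (an ANR analogue of \cite[Proposition 7 and Theorem 9]{Larusson2015PAMS} for section spaces of flexible bundles), this permits a small deformation of $f$ so that $f(o)\in \mathscr A^1(M,A)$. Next, apply Corollary \ref{cor:flexible} with $P=\partial B$ and $Q=\varnothing$ to approximate $g$ uniformly on compacta by a continuous map $G\colon \partial B\times[0,1]\to \mathscr A^1(M,A)$ agreeing with $g$ on $\partial B\times\{0\}$. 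The assumed local contractibility of $\mathscr A^1(M,A)$ at $f(o)$ lets one contract $G|_{\partial B\times\{1\}}$ within $\mathscr A^1(M,A)$ to the constant map $f(o)$ through values arbitrarily close to $f(o)$; concatenating this contraction with $G$ yields the required $F$ with $F|_{\partial B}=f|_{\partial B}$. A close enough approximation, together with the ANR property of $\mathscr O^1(M,A)$, produces the desired rel-$\partial B$ homotopy from $f$ to $F$ inside $\mathscr O^1(M,A)$. This statement gives monomorphism at $\pi_{k-1}$, and by applying it to the map $B^{k+1}\to \mathscr O^1(M,A)$ obtained by coning off a based map $S^k\to\mathscr O^1(M,A)$ whose basepoint lies in $\mathscr A^1(M,A)$, also epimorphism at $\pi_k$. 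Taking $k\geq 1$ yields injection of path components and isomorphism of all higher homotopy groups, which is the first claim of the theorem.

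For the final assertion, it remains to establish surjection of path components when $A=\boldA$ or $A=\C^n_*$. For $A=\boldA$, the last sentence of Corollary \ref{cor:h-Runge-algebraic} asserts that every continuous section of $\mathcal A$ is homotopic to an algebraic section; since $\boldA$ is Oka, the Oka principle identifies the path components of $\mathscr C^1(M,\boldA)$ with those of $\mathscr O^1(M,\boldA)$, giving the same surjection for holomorphic sections. For $A=\C^n_*$, the analogous statement follows either directly from the parametric Oka principle together with the flexibility of $\C^n_*$, or from \cite[Proposition 2.3 and Corollary 1.8]{AlarconLarusson2023regular}. Combined with the first half of the theorem, this yields the weak homotopy equivalence in these two cases. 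The main technical hurdle throughout is verifying the ANR (or at least local contractibility) property of $\mathscr O^1(M,A)$, which is used both to deform $f$ so that $f(o)\in\mathscr A^1(M,A)$ and to upgrade a sufficiently close approximation to an actual rel-$\partial B$ homotopy; I expect this to follow by a routine adaptation of the methods of \cite{Larusson2015PAMS} from mapping spaces to section spaces of flexible bundles over affine manifolds, but the details must be checked to ensure the ``close enough implies homotopic'' step goes through uniformly in the parameter.
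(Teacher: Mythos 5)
Your argument follows the same route as the paper: the proof of Theorem \ref{th:locally-contractible} is transplanted to sections of $\mathcal A\to M$, with Corollary \ref{cor:flexible} supplying the parametric algebraic approximation (note that its second statement already covers algebraic fibre bundles with flexible fibre, so the extension of LNDs you sketch need not be redone), and with the assumed local contractibility of $\mathscr A^1(M,A)$ playing the same role as before. The one point you leave open --- the ANR/local path-connectedness property of $\mathscr O^1(M,A)$, which you call the main technical hurdle and propose to obtain by adapting the methods of \cite{Larusson2015PAMS} to section spaces --- requires no new work at all. Since $TM$ is holomorphically trivial for every open Riemann surface (Oka, Gunning--Narasimhan), division by a nowhere vanishing holomorphic $1$-form $\theta$ is a homeomorphism $\mathscr O^1(M,A)\to\mathscr O(M,A)$, and the latter space is an absolute neighbourhood retract by \cite[Proposition 7 and Theorem 9]{Larusson2015PAMS}, because $M$ is Stein and the flexible cone $A$ is Oka. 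This is precisely how the paper disposes of the issue; with it, your deformation of $f$ so that $f(o)\in\mathscr A^1(M,A)$ and your ``close enough implies homotopic rel $\partial B$'' step both go through verbatim, and the first assertion of the theorem follows as you describe.

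Two smaller remarks on the final assertion. Your argument for $A=\boldA$ (every continuous section of $\mathcal A$ is homotopic to an algebraic one, and the Oka principle for sections of $\mathcal A$, whose fibre is Oka, transfers this to a surjection of path components for holomorphic sections) is sound and amounts to the fact the paper cites, namely \cite[Proposition 2.3]{AlarconLarusson2023regular}. For $A=\C^n_*$, however, ``parametric Oka principle together with flexibility'' is not by itself a proof of surjectivity on path components: flexibility of the target does not guarantee that algebraic maps or sections reach every homotopy class, as Loday's example of maps $\Sigma^p\times\Sigma^q\to\Sigma^{p+q}$ recalled in the paper shows. You should therefore rest this case on the cited result (\cite[Corollary 2.2]{AlarconLarusson2023regular} is the relevant statement) rather than on that heuristic.
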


\begin{proof}
The proof of the first statement is identical to that of Theorem \ref{th:locally-contractible} once we note that since $TM$ is holomorphically trivial, the space $\mathscr O^1(M, A)$ is homeomorphic to the space $\mathscr O(M, A)$ and is therefore an absolute neighbourhood retract.  When $A=\C_*^n$, the inclusion induces a surjection of path components by  \cite[Corollary 2.2]{AlarconLarusson2023regular}.  For $A=\boldA$, this holds by \cite[Proposition 2.3]{AlarconLarusson2023regular}.
\end{proof}


\subsection*{Acknowledgements}
Alarc\'on was partially supported by the State Research Agency (AEI) 
via the grant no.\ PID2020-117868GB-I00, and the \lq\lq Maria de Maeztu\rq\rq\ Unit of Excellence IMAG, reference CEX2020-001105-M, funded by MCIN/AEI/10.13039/501100011033/. Forstneri\v c is supported by the European Union
(ERC Advanced grant HPDR, 101053085) 
and grants P1-0291, J1-3005, and N1-0237 from ARIS, Republic of Slovenia. 
Much of this work was done during a visit of Forstneri\v c and L\'arusson
to the University of Granada in April 2024, and it was completed
during a visit of Alarc\'on and L\'arusson to the University of Ljubljana
in June 2024. We wish to thank the universities for hospitality and support.




\vspace*{4mm}

\noindent Antonio Alarc\'{o}n

\noindent Departamento de Geometr\'{\i}a y Topolog\'{\i}a e Instituto de Matem\'aticas (IMAG), Universidad de Granada, Campus de Fuentenueva s/n, E--18071 Granada, Spain

\noindent  e-mail: {\tt alarcon@ugr.es}

\vspace*{2mm}
\noindent Franc Forstneri\v c 

\noindent Faculty of Mathematics and Physics, University of Ljubljana, Jadranska 19, SI--1000 Ljubljana, Slovenia 

\noindent 
Institute of Mathematics, Physics and Mechanics, Jadranska 19, SI--1000 Ljubljana, Slovenia 

\noindent e-mail: {\tt franc.forstneric@fmf.uni-lj.si} 

\vspace*{2mm}
\noindent Finnur L\'arusson

\noindent Discipline of Mathematical Sciences, University of Adelaide, Adelaide SA 5005, Australia

\noindent  e-mail: {\tt finnur.larusson@adelaide.edu.au}

\end{document}